\mathchardef\mhyphen="2D
\DeclareMathOperator{\Comod}{{\rm Comod}}
\DeclareMathOperator{\holim}{{\rm holim}}
\DeclareMathOperator{\Aut}{{\rm Aut}}
\DeclareMathOperator{\cod}{{\rm cod}}
\DeclareMathOperator{\op}{{\rm op}}
\DeclareMathOperator{\colim}{{\rm colim}}
\DeclareMathOperator{\Ext}{{\rm Ext}}
\DeclareMathOperator{\conn}{{\rm conn}}
\DeclareMathOperator{\coker}{{\rm coker}}
\DeclareMathOperator{\Spec}{{\rm Spec}}
\DeclareMathOperator{\Sq}{{\rm Sq}}
\DeclareMathOperator{\ann}{{\rm ann}}
\DeclareMathOperator{\idealsets}{{\rm idealsets}}
\DeclareMathOperator{\Ab}{{\rm Ab}}
\DeclareMathOperator{\homog}{{\rm homog}}
\DeclareMathOperator{\grad}{{\rm grad}}
\DeclareMathOperator{\Mit}{{\rm Mit}}
\DeclareMathOperator{\tr}{{\rm tr}}
\DeclareMathOperator{\id}{{\rm id}}
\DeclareMathOperator{\ob}{{\rm ob\:}}
\DeclareMathOperator{\comod}{{\rm comod}}
\DeclareMathOperator{\gr}{{\rm gr}}
\DeclareMathOperator{\Mod}{{\rm Mod}}
\DeclareMathOperator{\dist}{{\rm dist}}
\theoremstyle{plain}
\newtheorem{prop}{Proposition}[section]
\newtheorem{theorem}[prop]{Theorem}
\newtheorem{corollary}[prop]{Corollary}
\newtheorem{setup for thm}[prop]{Setup for theorem}
\newtheorem{necessary condition}[prop]{Necessary condition}
\newtheorem*{unnumberedtheorem}{Theorem}
\newtheorem*{unn motivating questions}{Motivating Questions}
\newtheorem{lemma}[prop]{Lemma}
\newtheorem{motivating questions}[prop]{Motivating Questions}
\newtheorem{definition}[prop]{Definition}
\newtheorem{definition-proposition}[prop]{Definition-Proposition}
\newtheorem{definition-theorem}[prop]{Definition-Theorem}
\newtheorem{running notations}[prop]{Running notations}
\theoremstyle{definition}
\newtheorem{remark}[prop]{Remark}
\newtheorem{intuitive idea}[prop]{Intuitive idea}
\newtheorem*{history-and-status-of-this-paper}{History and status of this paper}
\newtheorem{question}[prop]{Question}
\newtheorem{example}[prop]{Example}
\newtheorem{examples}[prop]{Examples}
\newtheorem{observation}[prop]{Observation}
\newtheorem{conventions}[prop]{Conventions}
\title[Derived limits in dual Steenrod algebra comodules]{Derived functors of product and limit in the category of comodules over the dual Steenrod algebra.}
\author{A. Salch}
\begin{document}
\begin{abstract}
In the 2000s, Sadofsky constructed a spectral sequence which converges to the mod $p$ homology groups of a homotopy limit of a sequence of spectra. The input for this spectral sequence is the derived functors of sequential limit in the category of graded comodules over the dual Steenrod algebra. Since then, there has not been an identification of those derived functors in more familiar or computable terms. Consequently there have been no calculations using Sadofsky's spectral sequence except in cases where these derived functors are trivial in positive cohomological degrees. 

In this paper, we prove that the input for the Sadofsky spectral sequence is the graded local cohomology of the Steenrod algebra, taken with appropriate (quite computable) coefficients. 
This turns out to require both some formal results, like some general results on torsion theories and local cohomology of noncommutative non-Noetherian rings, and some decidedly non-formal results, like a 1985 theorem of Steve Mitchell on some very specific duality properties of the Steenrod algebra not shared by most finite-type Hopf algebras. Along the way there are a few results of independent interest, such as an identification of the category of graded $A_*$-comodules with the full subcategory of graded $A$-modules which are torsion in an appropriate sense.
\end{abstract}
\maketitle
\tableofcontents

\section{Introduction.}

Let $k$ be a field.
It is well-known that the category $\Comod(\Gamma)$ of comodules over a $k$-coalgebra $\Gamma$ is abelian and has generally agreeable properties, like being complete and co-complete and having enough injectives, but unlike the category of modules over a ring, $\Comod(\Gamma)$ does not necessarily satisfy Grothendieck's axiom $AB4^*$. Recall from \cite{MR0102537} that $AB4^*$ is the axiom that states that infinite products of exact sequences are exact. Consequently, given a set $\{ M_i: i\in I\}$ of $\Gamma$-comodules, we may have nonvanishing higher right-derived functors $R^n\prod_{i\in I}^{\Gamma} M_i$ of the categorical product $\prod^{\Gamma}$ in the category of $\Gamma$-comodules. One consequence is that, given a sequence $\dots \rightarrow M_2 \rightarrow M_1 \rightarrow M_0$ of $\Gamma$-comodule homomorphisms, we may have nonvanishing right-derived functors $R^n\lim^{\Gamma}_i M_i$ for $n>1$, unlike the familiar situation in categories of modules; see \cite{MR132091} and \cite{MR2197371} for this implication.

There are important topological consequences. Let $p$ be a prime number, suppose that the ground field $k$ is the field $\mathbb{F}_p$ with $p$ elements, and suppose that the coalgebra $\Gamma$ is the $p$-primary dual Steenrod algebra. In the influential but unpublished 2001 preprint \cite{sadofsky2001homology}, H. Sadofsky constructed a spectral sequence 
\begin{align}
\label{sadofsky ss 1} E_2^{*,*} \cong R^*\lim^{\Gamma}_i H_*(X_i; \mathbb{F}_p) &\Rightarrow H_*\left( \holim_i X_i; \mathbb{F}_p\right)\end{align}
for a sequence $\dots \rightarrow X_2 \rightarrow X_1 \rightarrow X_0$ of $H\mathbb{F}_p$-nilpotently complete spectra. Since then, Sadofsky's spectral sequence has been written about, and the details of its construction are available in the published literature: for example, see section 11.3 of \cite{barthel_pstragowski_2023}, or the appendix of \cite{MR4080481}, or \cite{MR2337861} for the case of products of spectra, where one has a spectral sequence
\begin{align}
\label{sadofsky ss 2} E_2^{*,*} \cong R^*\prod^{\Gamma}_i H_*(X_i; \mathbb{F}_p) &\Rightarrow H_*\left( \prod_i X_i; \mathbb{F}_p\right)\end{align}
for any set $\{ X_i: i\in I\}$ of spectra.
We will refer to \eqref{sadofsky ss 1} as the {\em Sadofsky spectral sequence,} and \eqref{sadofsky ss 2} as the {\em Hovey-Sadofsky spectral sequence.}
Countable products can of course be treated as a special case of sequential limits, so it is easy to see the countable case of the Hovey-Sadofsky spectral sequence as a special case of the Sadofsky spectral sequence.

It is a classical theorem of Adams (Theorem III.15.2 of \cite{MR1324104}) that mod $p$ homology commutes with homotopy limits of {\em uniformly bounded-below} sequences of spectra. That is, $H_*\left( \holim_i X_i;\mathbb{F}_p\right) \rightarrow \lim_i H_*(X_i;\mathbb{F}_p)$ is an isomorphism if there is a {\em uniform} lower bound on the degrees of the nonvanishing homotopy groups of the spectra $X_0, X_1, X_2, \dots$. Sadofsky's spectral sequence is the only available general tool for calculating homology of homotopy limits of spectra in the absence of a uniform lower bound on their homotopy groups. 

However, to date there has been little known about the input of the Sadofsky or Hovey-Sadofsky spectral sequences, because it has been unclear whether there could be some practical means of calculating the derived functors of sequential limit, or of product, in categories of graded comodules\footnote{However, there are some known tools for calculating the input for special cases of generalizations of the Sadofsky spectral sequence. Under reasonable conditions (see \cite{MR2337861} or \cite{barthel_pstragowski_2023}), one can build a version of the Sadofsky spectral sequence for the Morava $E$-theory $E(\mathbb{G})_*$ of a formal group law $\mathbb{G}$, rather than mod $p$ homology. Its input is the derived limit $R^*\lim^{E(\mathbb{G})_*E(\mathbb{G})}_i E(\mathbb{G})_*(X_i)$ in the category of graded $E(\mathbb{G})_*E(\mathbb{G})$-comodules, and it converges to the $E(\mathbb{G})$-homology groups of the homotopy limit $\holim_i X_i$. In one case, there is a known description of such a derived limit in $E(\mathbb{G})_*E(\mathbb{G})$-comodules: the derived limit group $R^s\lim^{E(\mathbb{G})_*E(\mathbb{G})}_i E(\mathbb{G})_*/\mathfrak{m}_i$ is proven to be isomorphic to the continuous cohomology $H^s_{cts}(\Aut(\mathbb{G}); E(\mathbb{G})_*E(\mathbb{G}))$ of the Morava stabilizer group, in Theorem D of \cite{barthel_pstragowski_2023}. 

Along similar lines, the paper \cite{MR4060488} contains a sustained investigation of the properties of adic completion and derived adic completion on categories of comodules. These comodule completions involve limits of the form $\lim^{\Gamma}_i (M\otimes_A A/I^i)$ in the category of comodules over a Hopf algebroid $(A,\Gamma)$. 
That paper yields tools which are useful for the study of adic completions of comodules, but not the tools for study and calculation of {\em general} limits in comodule categories which are developed in this paper.}. As Behrens and Rezk write in \cite{MR4094969} about the Sadofsky spectral sequence, ``[t]he $E_2$-term of this spectral sequence is in general quite mysterious'', and as Hovey writes in \cite{MR2337861} about the Hovey-Sadofsky spectral sequence, ``[a]lmost nothing is known about these right derived functors''. 
The purpose of this paper is to develop some general theory and some practical tools for calculating derived functors of sequential limits and of products in the category of graded comodules over a graded coalgebra, with a particular emphasis on the dual Steenrod algebras as our motivating examples. The basic strategy is to use the {\em covariant} embedding of the category of graded $\Gamma$-comodules into the category of graded $\Gamma^*$-modules, in order to identify the derived functors of product in the former category with some more familiar cohomological invariant in the latter category.

Below, we survey the results in this paper, but first we present the most topologically compelling results, which appear as Corollaries \ref{derived products are local cohomology} and \ref{steenrod alg seq cor}. The notation is as follows: $p$ is any prime number, $\Gamma^*$ denotes the mod $p$ Steenrod algebra, $\Gamma$ is its dual coalgebra, $I_j$ is the ideal of the Steenrod algebra $\Gamma^*$ generated by all homogeneous elements of degree $\geq j$, and for a graded $\Gamma$-comodule $M$, we write $\iota M$ for $M$ regarded as a graded $\Gamma^*$-module via the adjoint $\Gamma^*$-action. 
\begin{unnumberedtheorem}
Let $\{ M_i: i\in I\}$ be a set of bounded-above\footnote{To be clear, the grading on all modules and comodules in this paper is the {\em cohomological} grading. Hence, if $M_i$ is the homology of a spectrum---the most relevant case for the Sadofsky and Hovey-Sadofsky spectral sequence---then the relevant assumption is that the spectrum should be bounded {\em below.}

Furthermore, the assumption here is only that {\em each} graded $\Gamma$-comodule is bounded above. There does not need to be a {\em uniform} upper bound on these comodules. In the case where there is a uniform upper bound on the comodules, none of the work we do in this paper is necessary, because it is easy to show that the comodule product is exact.} graded $\Gamma$-comodules. Then the $n$th derived functor $R^n\prod_{i\in I}^{\Gamma} M_i$ of the product of the $M_i$ in the category of graded $\Gamma$-comodules is isomorphic, as an abelian group, to \[ \underset{j\rightarrow\infty}{\colim} \Ext_{\Gamma^*}^n\left( \Gamma^*/I_j,\prod_{i\in I} \iota M_i\right).\] That is, the derived functors of product in the category of graded $\Gamma$-comodules are given, on families of bounded-above comodules, by the graded local cohomology of the Steenrod algebra $\Gamma^*$ with coefficients in the product of the adjoint $\Gamma^*$-modules.
\end{unnumberedtheorem}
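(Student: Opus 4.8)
The strategy is to transport the computation through the covariant embedding $\iota$. Recall that $\iota\colon\Comod(\Gamma)\to\Mod(\Gamma^*)$ is exact and fully faithful, and identifies $\Comod(\Gamma)$ with the full subcategory $\mathcal{T}\subseteq\Mod(\Gamma^*)$ of those graded $\Gamma^*$-modules $N$ which are \emph{torsion} in the sense that each element of $N$ is annihilated by $I_j$ for some $j$; that $\mathcal{T}$ is a hereditary torsion class; and that the associated torsion radical $t$, given by $t(N)=\colim_j\operatorname{Hom}_{\Gamma^*}(\Gamma^*/I_j,N)$, is right adjoint to the inclusion $\mathcal{T}\hookrightarrow\Mod(\Gamma^*)$. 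Since right adjoints preserve products, for any family $\{M_i:i\in I\}$ of graded $\Gamma$-comodules one has $\iota\left(\prod^{\Gamma}_{i\in I}M_i\right)\cong t\left(\prod_{i\in I}\iota M_i\right)$, where the product on the right-hand side is the (exact, since $AB4^*$ holds in module categories) product of graded $\Gamma^*$-modules.

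To pass to derived functors I would use that each $M_i$ is bounded above and $\Gamma$ is bounded above, so that each $M_i$ admits a bounded-above injective resolution $M_i\to J_i^{\bullet}$ in $\Comod(\Gamma)$; the tuple $\{J_i^{\bullet}\}$ is then an injective resolution of $\{M_i\}$ in the product category $\Comod(\Gamma)^I$. Applying $\prod^{\Gamma}$, then $\iota$, and using the displayed isomorphism degreewise together with exactness of $\iota$, one gets $R^n\prod^{\Gamma}_{i\in I}M_i\cong H^n\!\left(t\left(\prod_{i\in I}\iota J_i^{\bullet}\right)\right)$ as abelian groups. Now $\prod_{i\in I}\iota J_i^{\bullet}$ is a resolution of $\prod_{i\in I}\iota M_i$ by graded $\Gamma^*$-modules, since $\iota$ and the module product are exact; so if each term of this complex is acyclic for the functor $t$, then the complex computes the right-derived functors $R^n t\left(\prod_{i\in I}\iota M_i\right)$. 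Finally, because $t=\colim_j\operatorname{Hom}_{\Gamma^*}(\Gamma^*/I_j,-)$ and filtered colimits are exact, applying $t$ to an injective resolution and taking cohomology gives $R^n t(N)\cong\colim_j\Ext^n_{\Gamma^*}(\Gamma^*/I_j,N)$, the graded local cohomology; taking $N=\prod_{i\in I}\iota M_i$ then yields the claimed formula.

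Everything therefore reduces to a single acyclicity statement: if each $J_i$ is a bounded-above injective $\Gamma$-comodule, then the graded $\Gamma^*$-module $\prod_{i\in I}\iota J_i$ is $t$-acyclic. This is the non-formal heart of the matter, and where both the bounded-above hypothesis and the specific structure of the dual Steenrod algebra are essential: for a uniformly bounded family the module product $\prod_{i\in I}\iota J_i$ is already torsion and there is nothing to prove, while for a typical finite-type Hopf algebra the statement fails even for bounded-above families. A bounded-above injective $\Gamma$-comodule is a retract of a product, formed in $\Comod(\Gamma)$, of shifted copies of $\Gamma$, so after applying $\iota$ the problem becomes the vanishing of $\colim_j\Ext^n_{\Gamma^*}(\Gamma^*/I_j,-)$ for $n>0$ on products of shifted copies of the dual Steenrod algebra. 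I expect controlling these $\Ext$-groups — via Mitchell's 1985 theorem on the duality properties of the quotients $\Gamma^*/I_j$ and their behaviour under infinite products, and in particular handling the absence of a uniform bound across the family $\{J_i\}$ — to be the main obstacle; by contrast, the passage from comodules to torsion modules, the identification of the comodule product with $t$ of the module product, and the bookkeeping with resolutions and colimits above are all formal consequences of the torsion-theoretic setup once the embedding $\iota$ is available.
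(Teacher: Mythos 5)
Your skeleton---transport through $\iota$, compute the comodule product as the torsion radical applied to the module product, resolve each $M_i$ by bounded-above injective comodules, and reduce to an acyclicity statement plus the formula $R^nt\cong\underset{j\rightarrow\infty}{\colim}\Ext^n_{\Gamma^*}(\Gamma^*/I_j,-)$---is essentially the paper's. But two of your steps are gaps, and you have located the non-formal content in the wrong place. The opening ``recall,'' that $\iota$ identifies $\gr\Comod(\Gamma)$ with the modules in which every homogeneous element is annihilated by some $I_j$ and that the right adjoint to the inclusion is $t=\underset{j\rightarrow\infty}{\colim}\,\underline{\hom}_{\Gamma^*}(\Gamma^*/I_j,-)$, is not standard background; it is a substantial part of what the paper proves. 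Because $\Gamma^*$ is noncommutative, the torsion elements of a module form a priori only a subgroup, not a submodule (Remark \ref{remark on ideal sets 2}), and the right adjoint $\tr$ composed with $\iota$ is a priori $H^0_{\dist}$, the submodule generated by the distinguished-torsion elements (Theorem \ref{theta-rationality and torsion}, Corollary \ref{H0 rat and iota}); identifying this with a colimit of Hom groups against the ideals $I_j$ is obtained in the paper as Theorem \ref{main thm on mitchell coalgebras}, whose key step $\grad\leq\dist$ (Proposition \ref{grad dist mit}) is exactly where Mitchell's 1985 self-duality theorem enters. So the non-formal input you attribute to the acyclicity step in fact lives in the step you declare ``formal.''

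Second, the acyclicity claim you isolate is left unproved (``I expect controlling these Ext-groups \dots to be the main obstacle''), so as written the argument is incomplete---and the obstacle is misdiagnosed. After your own reduction (each bounded-above injective comodule $J_i$ is a retract of a comodule product of shifts of $\Gamma$ with bounded-above shifts, and by Lemma \ref{bdd above products lemma} $\iota$ of such a comodule product is the module product of shifts of $\iota\Gamma$), no control of $\underset{j\rightarrow\infty}{\colim}\Ext$ on infinite products is needed: $\iota\Gamma\cong\Gamma^{**}$ is injective as a graded $\Gamma^*$-module because $\Gamma^*$ is connected, a product of injective graded modules is injective, and injective modules are acyclic for the right derived functors of any left exact functor, in particular for $t$. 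This is exactly how the paper argues: Theorem \ref{main thm 1}(2) shows that $\iota$ carries bounded-above injective graded comodules to injective graded modules, so $\prod_{i\in I}\iota J_i$ is injective and your acyclic-resolution step (the paper's Grothendieck spectral sequence in the proof of Theorem \ref{main thm 1}(6)) goes through. The bounded-above hypothesis is used precisely at Lemma \ref{bdd above products lemma}; Mitchell's theorem plays no role in this step.
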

\begin{unnumberedtheorem}
Let \begin{equation*}
\dots\rightarrow M_2\rightarrow M_1\rightarrow M_0\end{equation*} be a sequence of bounded-above graded $\Gamma$-comodules such that $R^1\lim$ vanishes on the sequence of graded $\Gamma^*$-modules 
$\dots\rightarrow \iota M_2\rightarrow \iota M_1\rightarrow \iota M_0$.
Then we have an isomorphism of graded $\Gamma^*$-modules
\[ \iota R^*\lim^{\Gamma}_i M_i  \cong \underset{j\rightarrow\infty}{\colim} \Ext^*_{\Gamma^*}\left(\Gamma^*/I_j,\lim_i \iota M_i\right)\]
\end{unnumberedtheorem}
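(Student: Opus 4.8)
The plan is to factor the comodule-level sequential limit through the module-level limit and the torsion functor, and then to compute its derived functors by analysing that composite one functor at a time, using the hypothesis that the module-level ${\lim}^1$ vanishes together with the vanishing of $R^{>0}\prod^{\Gamma}$ on injective comodules (which, via Corollary~\ref{derived products are local cohomology}, is the only non-formal input). Let $t\colon \Gamma^*\mhyphen\Mod \to \Comod(\Gamma)$ denote the right adjoint of the exact, fully faithful embedding $\iota$; by the identification (recalled earlier) of $\Comod(\Gamma)$ with the hereditary torsion class of $I_\infty$-torsion $\Gamma^*$-modules, $t$ is the corresponding torsion functor, and the endofunctor $\iota t$ of $\Gamma^*\mhyphen\Mod$ is the $I_\infty$-torsion radical, whose right-derived functors are $R^n(\iota t)(N) \cong \colim_j \Ext^n_{\Gamma^*}(\Gamma^*/I_j, N)$. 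Since $\iota$ is fully faithful and $t$, being a right adjoint, preserves limits, on towers we have $\lim^{\Gamma}_i M_i \cong t\big(\lim_i \iota M_i\big)$; applying the exact functor $\iota$ yields an isomorphism of functors $\iota\circ\lim^{\Gamma} \cong (\iota t)\circ\lim\circ\iota_\bullet \colon \mathrm{Tow}(\Comod(\Gamma)) \to \Gamma^*\mhyphen\Mod$, where $\iota_\bullet$ is the levelwise embedding of towers and $\lim$ is computed in $\Gamma^*\mhyphen\Mod$. As $\iota$ is exact, it commutes with the passage to derived functors, so it suffices to identify $R^{*}\big((\iota t)\circ\lim\circ\iota_\bullet\big)(M_\bullet)$.

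I would analyse this composite by the Grothendieck composite-functor spectral sequence, treating the inner functor $\lim\circ\iota_\bullet$ and the outer functor $\iota t$ separately. An injective object of $\mathrm{Tow}(\Comod(\Gamma))$ can be taken of the form $I_\bullet = \big(m \mapsto \prod^{\Gamma}_{n\le m} J_n\big)$ with each $J_n$ an injective $\Gamma$-comodule which, when $M_\bullet$ consists of bounded-above comodules, may be chosen bounded above; since the product is finite at each level, $\iota_\bullet I_\bullet$ is the tower $m \mapsto \prod_{n\le m}\iota J_n$ of graded $\Gamma^*$-modules, whose transition maps are projections, hence a flabby, and therefore $\lim$-acyclic, tower. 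Consequently $R^{*}\big(\lim\circ\iota_\bullet\big)(M_\bullet) \cong R^{*}\lim(\iota M_\bullet)$; and since graded $\Gamma^*$-modules satisfy $AB4^*$, the functor $\lim$ on $\mathbb{N}$-indexed towers of such modules is acyclic above cohomological degree $1$, with $R^1\lim(\iota M_\bullet) = {\lim}^1_i\iota M_i = 0$ by hypothesis, so $R^{*}\big(\lim\circ\iota_\bullet\big)(M_\bullet)$ is concentrated in degree $0$ with value $\lim_i\iota M_i$. For the outer functor, the spectral sequence $R^{p}(\iota t)\big(R^{q}(\lim\circ\iota_\bullet)(M_\bullet)\big) \Rightarrow R^{p+q}\big((\iota t)\circ\lim\circ\iota_\bullet\big)(M_\bullet)$ is legitimate once $\lim\circ\iota_\bullet$ is known to send injective towers to $(\iota t)$-acyclic modules; and $\lim(\iota_\bullet I_\bullet) \cong \prod_n\iota J_n$ is indeed $(\iota t)$-acyclic, because applying Corollary~\ref{derived products are local cohomology} to the family of (bounded-above) injective comodules $\{J_n\}_n$, on which $R^{>0}\prod^{\Gamma}$ manifestly vanishes, shows $R^{>0}(\iota t)\big(\prod_n\iota J_n\big) = 0$. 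Combining the two, the spectral sequence collapses onto its $q=0$ row and gives $\iota R^{n}\lim^{\Gamma}_i M_i \cong R^{n}(\iota t)\big(\lim_i\iota M_i\big) = \colim_j \Ext^n_{\Gamma^*}\big(\Gamma^*/I_j, \lim_i\iota M_i\big)$, an isomorphism of graded $\Gamma^*$-modules.

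The step I expect to require the most care — and the only one that is not purely formal — is the acyclicity $R^{>0}(\iota t)\big(\prod_n\iota J_n\big) = 0$; happily this is obtained by feeding injective comodules into Corollary~\ref{derived products are local cohomology}, so it rests on the same 1985 theorem of Mitchell and need not be reproved here. The only other point that must be handled with attention is conceptual rather than computational: because $t$ is not exact, one cannot simply commute it past $R^{*}\lim$, so the argument must be organized through the composite-functor spectral sequences above, and the hypothesis must be imposed on the module-level tower $\iota M_\bullet$ — where $\lim$ has cohomological dimension $1$ — rather than as a vanishing statement about the comodule-level derived limit itself. An equivalent route that avoids the factorization uses instead that $R\lim^{\Gamma}$ of a tower is the fibre of $1 - \mathrm{shift}$ on the derived product $R\prod^{\Gamma}$: one applies $\iota$ and Corollary~\ref{derived products are local cohomology} to identify $\iota R^{*}\prod^{\Gamma}_i M_i$ with $\colim_j\Ext^{*}_{\Gamma^*}\big(\Gamma^*/I_j, \prod_i\iota M_i\big)$ compatibly with the shift, and then feeds the short exact Milnor sequence $0 \to \lim_i\iota M_i \to \prod_i\iota M_i \xrightarrow{1-\mathrm{shift}} \prod_i\iota M_i \to 0$ — short exact precisely because ${\lim}^1_i\iota M_i = 0$ — into the long exact sequence of local cohomology groups, matching the two resulting two-step filtrations on $\iota R^{n}\lim^{\Gamma}_i M_i$ and $\colim_j\Ext^n_{\Gamma^*}(\Gamma^*/I_j, \lim_i\iota M_i)$.
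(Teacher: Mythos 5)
Your argument is correct, and it takes a genuinely different (though closely related) route from the paper's. Both proofs ultimately establish the intermediate isomorphism $R^{n}\lim^{\Gamma}\mathcal{F}\cong R^{n}\tr\bigl(\lim_i\iota M_i\bigr)$ and then apply Theorem~\ref{main thm 1}(1) together with Theorem~\ref{main thm on mitchell coalgebras} to identify $\iota R^{n}\tr$ with $\colim_j\Ext^n_{\Gamma^*}(\Gamma^*/I_j,-)$. But the way you get to that intermediate isomorphism is different. The paper works with the Moore complex $C^{\bullet}\iota\mathcal{F}$ of the Bousfield--Kan cosimplicial replacement of the tower, and runs the two hypercohomology spectral sequences for the left-exact functor $\tr$ applied to that complex; the ${}^{\prime\prime}E$-spectral sequence collapses via the $R^1\lim$-vanishing hypothesis, and the ${}^{\prime}E$-spectral sequence is claimed to collapse via bounded-above-ness and the claim that $R^t\tr$ commutes with the products appearing in the Moore complex. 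You instead factor $\iota\circ\lim^{\Gamma}$ through $\Gamma^*$-modules as $(\iota\tr)\circ\lim\circ\iota_\bullet$, take an honest injective resolution in the category of towers of comodules, and run a single Grothendieck composite-functor spectral sequence. The decisive acyclicity check there --- that $\lim\circ\iota_\bullet$ sends injective towers $\bigl(m\mapsto\prod^{\Gamma}_{n\le m}J_n\bigr)$ of bounded-above injective comodules to the $\iota\tr$-acyclic module $\prod_n\iota J_n$ --- is handled cleanly by Theorem~\ref{main thm 1}(6).

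A point in favor of your organization: it sidesteps a delicate step in the paper's proof. The paper asserts that $\tr$ commutes with products ``and hence $R^t\tr$ commutes with products.'' The implication is not automatic, precisely because products in $\gr\Comod(\Gamma)$ are not exact; and indeed Theorem~\ref{main thm 1}(6) says that on a product $\prod_x\iota M_x$ of bounded-above adjoint modules one has $R^t\tr(\prod_x\iota M_x)\cong R^t\prod^{\Gamma}_xM_x$, which is \emph{not} $\prod^{\Gamma}_xR^t\tr\iota M_x\cong 0$ for $t>0$ in general. Your argument never needs $R^t\tr$ to commute with products --- you need only that products of bounded-above injective comodules (the modules $\prod_n\iota J_n$ that arise from injective towers) are $\tr$-acyclic, and this follows directly from the injectivity of the $J_n$ and Theorem~\ref{main thm 1}(6). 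Two small remarks: the argument really does need the $J_n$ to be chosen bounded above (so that Theorem~\ref{main thm 1}(6) applies), which you correctly arrange using coconnectivity of $\Gamma$; and your closing ``Milnor sequence'' variant recovers the paper's long exact sequence~\eqref{les 239094} rather than the isomorphism itself, as you note --- the isomorphism needs the spectral-sequence argument (either yours or the paper's).
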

That is, if $R^1\lim$ vanishes on the sequence of adjoint $\Gamma^*$-modules, then the derived functors of sequential limit in the category of graded $\Gamma$-comodules are given, on sequences of bounded-above comodules, by the graded local cohomology of the Steenrod algebra $\Gamma^*$ with coefficients in the limit of the adjoint $\Gamma^*$-modules.

For bounded-above comodules, these results reduce the calculation of the input of the Sadofsky and Hovey-Sadofsky spectral sequences to the calculation of graded local cohomology of the Steenrod algebra. Graded local cohomology is already a relatively familiar and well-studied theory, and there are already some computational tools for it, so we regard this identification of the Sadofsky and Hovey-Sadofsky spectral sequence $E_2$-terms as the main selling point of this paper.

We owe the reader some explanation of the peculiarities of {\em graded} local cohomology. Classically, given a commutative ring $R$, an ideal $I$ of $R$, and an $R$-module $M$, the local cohomology of $M$ at the ideal $I$ is defined by \begin{equation}\label{local coh old} H^*_I(M) := \underset{j\rightarrow\infty}{\colim}\Ext_R(R/I^j,M).\end{equation} In the literature (see \cite{MR1428799}, for example), when $R$ is a nonnegatively-graded Noetherian ring, not necessarily commutative, the {\em graded} local cohomology of a left $R$-module $M$ is defined as the colimit 
\begin{equation}\label{graded local coh}
 \underset{j\rightarrow\infty}{\colim}\Ext_R(R/I_j,M),
\end{equation}
where $I_j$ is the ideal of $R$ generated by all homogeneous elements of degree $\geq j$. The Noetherian hypothesis yields the isomorphism of \eqref{graded local coh} with $\underset{j\rightarrow\infty}{\colim}\Ext_R(R/I^j,M)$, where $I = I_1$ is the ideal of all elements of $R$ of positive degree. The Steenrod algebra, however, is graded but not Noetherian, and there is no guarantee that \eqref{graded local coh} must coincide with $\underset{j\rightarrow\infty}{\colim}\Ext_R(R/I^j,M)$. In this paper, we adopt \eqref{graded local coh}, rather than \eqref{local coh old}, as the definition of graded local cohomology for non-Noetherian graded rings like the Steenrod algebra.

Here are the other main results in this paper:
\begin{enumerate}
\item \Cref{Local cohomology functors associated...} covers elementary notions of local cohomology associated to a set of (left) ideals in a (not necessarily commutative) ring $R$. While local cohomology is well-studied over commutative rings, for the purposes of this paper we must consider local cohomology of {\em non}-commutative rings like the Steenrod algebra. Some familiar properties of local cohomology of commutative rings fail in surprising and treacherous ways in the absence of commutativity. Consequently we have to spend a large portion of this paper developing some necessary theory of local cohomology of noncommutative rings.

Given a set $S$ of ideals in $R$, we can consider the group \[ h^0_S(M)\coloneqq \underset{I\in S}{\colim} \hom_R(R/RI,M)\] of elements of an $R$-module $M$ which are $I$-torsion with respect to some ideal $I\in S$, or we can consider the $R$-submodule $H^0_S(M)$ of $M$ generated by the subgroup $h^0_S(M)$ of $M$. When $R$ is commutative, the functors $h^0_S$ and $H^0_S$ coincide, but for some noncommutative rings and some choices of $S$, they do not agree. Examples are given in Remark \ref{remark on ideal sets 2}. While $h^0_S$ is more familiar and computationally accessible, it is $H^0_S$ that carries the essential data about the relationship between comodules and modules, as we describe below.
\item \Cref{Comodules as a pretorsion...} reviews the well-known covariant embedding (via the adjoint action) of $\Gamma$-comodules into $\Gamma^*$-modules, and then reviews well-known ideas from torsion theory, and proves some preliminary results on the relationship between the two. A $\Gamma^*$-module $M$ is called {\em rational} if $M$ is in the image of this embedding. 
The main idea in \cref{Comodules as a pretorsion...} is that, given a coalgebra $\Gamma$ over a field, one can define a certain set $\dist(\Gamma)$ of ideals in $\Gamma^*$, the {\em distinguished ideals}, such that a $\Gamma^*$-module is rational if and only if the natural map $H^0_{\dist(\Gamma)}(M) \rightarrow M$ is an isomorphism. This is the content of Theorem \ref{theta-rationality and torsion}. 
One consequence is Corollary \ref{H0 is left exact}, which establishes that the functor $H^0_{\dist(\Gamma)}$ is left exact. (It is also true, but much easier to prove, that $h^0_{\dist(\Gamma)}$ is left exact.) Consequently we have some familiar homological tools for dealing with the right derived functors $H^*_{\dist(\Gamma)} \coloneqq R^*H^0_{\dist(\Gamma)}$, which we call {\em distinguished local cohomology.} Since the covariant embedding of comodules into modules is full and faithful, Theorem \ref{theta-rationality and torsion} characterizes $\Gamma$-comodules in terms of $\Gamma^*$-modules: the category of $\Gamma$-comodules is equivalent to the full subcategory of the $\Gamma^*$-modules generated by those $\Gamma^*$-modules $M$ such that $H^0_{\dist(\Gamma)}(M) \rightarrow M$ is an isomorphism.

I do not know of anywhere where Theorem \ref{theta-rationality and torsion} already appears in the literature, and I have not heard others mention the idea. Still I am uneasy about calling it a novel result: the ideas are quite close to those in sections 7 and 41 of \cite{MR2012570}. What is accomplished in \cref{Comodules as a pretorsion...} of this paper is only the development of a bit of (largely formal) theory, and the use of that bit of theory alongside ideas from \cite{MR2012570}. I regard \cref{Comodules as a pretorsion...} as a section mostly devoted to background and review. The hard ``non-formal'' work in this paper does not really begin until \cref{Distinguished local cohomology...}.
\item With \cref{Distinguished local cohomology...}, we begin to prove new results, at the cost of having to narrow the level of generality. Under the assumption that $\Gamma$ is a finite-type\footnote{To be clear, in this paper we adhere to the usual convention in algebraic topology: a graded vector space is said to be {\em finite-type} if it is finite-dimensional in each degree.} coalgebra over a field whose dual algebra $\Gamma^*$ is connected, Theorem \ref{main thm 1} shows that the rational graded $\Gamma^*$-modules (i.e., the graded $\Gamma$-comodules) form a hereditary pretorsion class in the category of graded $\Gamma^*$-modules. The same theorem furthermore shows that the higher distinguished local cohomology groups vanish on bounded-above graded $\Gamma^*$-modules, and that the bounded-above graded $\Gamma^*$-modules are rational. 

Most importantly, Theorem \ref{main thm 1} shows (under the same assumptions) that for any integer $n$ and any set $\{ M_i: i\in I\}$ of bounded-above graded $\Gamma$-comodules, the $n$th right-derived functor $R^n\prod_{i\in I}^{\Gamma} M_i$ of product in the category of graded $\Gamma$-comodules agrees (as a graded $\Gamma^*$-module via the adjoint action) with the distinguished local cohomology $H^n_{\dist(\Gamma)}\left(\prod_{i\in I} \iota(M_i)\right)$, where $\iota$ is the covariant embedding of comodules into modules. The point is that $\prod_{i\in I} \iota(M_i)$ is the {\em ordinary, familiar} Cartesian product of graded $\Gamma^*$-modules, not the more obscure categorical product in comodules. Consequently, if we reduce the distinguished local cohomology groups $H^n_{\dist(\Gamma)}$ to some familiar homological invariants, like a colimit of $\Ext$-groups, then we have a practical means of calculating $R^n\prod_{i\in I}^{\Gamma} M_i$, and consequently of calculating the homology groups of infinite products of spectra, via the Hovey-Sadofsky spectral sequence. Much of the rest of the paper is devoted to proving that $H^n_{\dist(\Gamma)}$ is indeed a straightforward colimit of $\Ext$-groups in sufficient generality to apply when $\Gamma$ is the dual Steenrod algebra at any prime.

One consequence of Theorem \ref{main thm 1} is Theorem \ref{structure thm}: if $\Gamma^*$ is finite-type and connected over a field, then for each integer $n$, every graded $\Gamma^*$-module $M$ is canonically an extension of an $n$-co-connected rational graded $\Gamma^*$-module by an $n$-connective graded $\Gamma^*$-module $\conn_n(M)$. Furthemore, the higher distinguished local cohomology groups of $M$ depend only on those of $\conn_n(M)$. Under the same hypotheses, we then get Corollary \ref{modules are limits of comodules}, which states that every graded $\Gamma^*$-module is a limit of a Mittag-Leffler sequence of rational $\Gamma^*$-modules. We also get Corollary \ref{uniqueness of module cat}, which states that the only full subcategory of $\gr\Mod(\Gamma^*)$ which contains the rational $\Gamma^*$-modules and which is closed under kernels and countable products is $\gr\Mod(\Gamma^*)$ itself.
\item \Cref{Mitchell coalgebras...} introduces the notion of a {\em Mitchell coalgebra}, a coalgebra over a field admitting a certain kind of decomposition which is intended to resemble the decomposition of the dual Steenrod algebra $A_*$ as the limit of the coalgebras $A(n)_*$. In \cite{MR793186}, S. Mitchell identified certain self-duality and compatibility properties of this decomposition of $A_*$, and we include analogous properties as part of the definition of a Mitchell coalgebra in Definition \ref{def of mitchell condition}, because we find in Theorem \ref{main thm on mitchell coalgebras} that precisely these properties can be used to show that the two local cohomology functors $h^*_{\dist(\Gamma)}$ and $H^*_{\dist(\Gamma)}$ coincide over the dual of a Mitchell coalgebra $\Gamma$. 
As a consequence, the distinguished local cohomology $H^*_{\dist(\Gamma)}$---which, by Theorem \ref{main thm 1}, computes the derived functors of product in the category of $\Gamma$-comodules---has the good computational properties of $h^*_{\dist(\Gamma)}$, and in particular, it is isomorphic to a colimit of $\Ext$-groups.

Since Mitchell's results in \cite{MR793186} establish that the dual Steenrod algebra is what we call a Mitchell coalgebra, we get the main results of this paper: if $\Gamma$ is the dual Steenrod algebra at any prime, then Corollary \ref{main cor 10} establishes that, for any graded $\Gamma^*$-module $M$, the distinguished local cohomology $H^*_{\dist(\Gamma)}(M)$ is isomorphic to the graded local cohomology $\underset{j\rightarrow\infty}{\colim} \Ext_{\Gamma^*}^*\left( \Gamma^*/I_j,M\right)$ of the Steenrod algebra $\Gamma^*$. A consequence is Corollary \ref{derived products are local cohomology}: if $\{ M_i: i\in I\}$ is a set of bounded-above graded $\Gamma$-comodules, then the $n$th derived functor $R^n\prod^{\Gamma}_{i\in I} M_i$ of product in the category of graded $\Gamma$-comodules is isomorphic, as an abelian group, to $\underset{j\rightarrow\infty}{\colim} \Ext_{\Gamma^*}^n\left( \Gamma^*/I_j,\prod_i M_i\right)$. Here $\prod_i M_i$ is the product (in the category of {\em modules}, i.e., the Cartesian product) of the graded $\Gamma^*$-modules $M_i$ with the adjoint action of $\Gamma^*$.
\item An abelian category satisfies Grothendieck's axiom $AB4^*$ if and only if products exist and are exact in that category. Weakenings of $AB4^*$ have been studied: given an integer $n$, an abelian category $\mathcal{C}$ is said to satisfy axiom $AB4^*\mhyphen (n)$ if and only if the $m$th derived functor of product in $\mathcal{C}$ vanishes for all $m > n$. The condition $AB4^*\mhyphen (0)$ is equivalent to $AB4^*$. When $\Gamma^*$ is the Steenrod algebra at some prime, one knows (e.g. from experience with the Adams spectral sequence) that $\Ext_{\Gamma^*}^{n}(M,N)$ is capable of being nonzero for arbitrarily large $n$, but this does not rule out the possibility of a finite bound on the integers $n$ such that $\underset{j\rightarrow\infty}{\colim} \Ext_{\Gamma^*}^{n}(\Gamma^*/I_j,N)$ is nonzero for some $N$. In other words, it seems plausible that the category of graded $\Gamma$-comodules satisfies axiom $AB4^*\mhyphen (n)$ for some $n$, and consequently that there is a uniform horizontal vanishing line in the $E_2$-term for the Hovey-Sadofsky spectral sequence calculating the mod $p$ homology of infinite products of spectra.

The purpose of \cref{Bounds on distinguished...} is to show that we are not, in fact, so lucky: Corollary \ref{no ab4n for dual steenrod alg} shows that the category of comodules over the dual Steenrod algebra, at any prime, cannot satisfy axiom $AB4^*\mhyphen (n)$ for any $n$ at all. This is a consequence of Theorem \ref{main thm 4}, which shows that, if $\Gamma$ is a coalgebra over a field satisfying some appropriate hypotheses (satisfied for the dual Steenrod algebras), then the category of bounded-above graded $\Gamma$-comodules satisfies $AB4^*\mhyphen (n)$ for some $n$ if and only if it satisfies $AB4^*$. The key to applying Theorem \ref{main thm 4} to the dual Steenrod algebra is a theorem of Margolis: Margolis has proven that the Steenrod algebras (more generally, the ``$\mathcal{P}$-algebras'' in Margolis' sense) have the properties assumed in the hypotheses of Thoerem \ref{main thm 4}. We also provide \cref{Review of Margolis...}, which is devoted to review of Margolis' basic theorems on graded modules over $\mathcal{P}$-algebras.
\item While Theorem \ref{main thm 1} shows that $R^n\prod_{i\in I}^{\Gamma} M_i$ agrees with the distinguished local cohomology group $H^n_{\dist}(\prod_{i\in I}\iota M_i)$, one would like to have a similar theorem for calculating more general derived limits, not just derived products, in comodule categories. \Cref{Derived functors of seq...} addresses that problem, at least for sequential limits. Theorem \ref{derived sequential lim thm} establishes that, when $\Gamma$ is a finite-type coalgebra over a field such that the dual algebra $\Gamma^*$ is connected, the derived functors of sequential limit $R^*\lim^{\Gamma}_iM_i$ in the category of graded $\Gamma$-comodules agree with the distinguished local cohomology groups $H^*_{\dist(\Gamma)}(\lim_i \iota M_i)$ as long as the sequence of graded $\Gamma^*$-modules $\dots \rightarrow \iota M_2 \rightarrow \iota M_1 \rightarrow \iota M_0$ has vanishing $\lim^1$, and as long as each comodule $M_i$ is bounded-above. It is an important and convenient point that this $\lim^1$-vanishing condition is checked in the {\em module} category, not in the {\em comodule} category, so it is relatively easy to check: one can simply verify that the Mittag-Leffler condition holds, for example.

Consequently, we have Corollary \ref{steenrod alg seq cor}: given a sequence $\dots \rightarrow M_2 \rightarrow M_1 \rightarrow M_0$ of bounded-above graded comodules over the dual Steenrod algebra at any prime, if the sequence of adjoint modules $\dots \rightarrow \iota M_2 \rightarrow \iota M_1 \rightarrow \iota M_0$ has vanishing $\lim^1$, then the derived limit $R^*\lim^{\Gamma}_i M_i$ in the category of comodules over the dual Steenrod algebra agrees with the graded local cohomology $\underset{j\rightarrow\infty}{\colim}\Ext^*_{\Gamma^*}\left(\Gamma^*/I_j,\lim_i \iota M_i\right)$ of the Steenrod algebra $\Gamma^*$.
\item \Cref{Review of Margolis...} reviews some results of Margolis, from \cite{MR738973}, on module theory over $\mathcal{P}$-algebras such as the Steenrod algebras. Those results of Margolis are used in several proofs in this paper, so it is useful to have an appendix dedicated to their review. There are no new results in \cref{Review of Margolis...}. 
\end{enumerate}

To demonstrate how the theory developed in this paper works in practice, we give two illustrative examples.
\begin{example}
Let $k$ be a field, and let $\Gamma$ be the graded $k$-coalgebra with $k$-linear basis $x_0, x_1, x_2, \dots$, with $x_i$ in degree $-2i$, and coproduct $\Delta(x_i) = \sum_{j=0}^i x_j\otimes x_{i-j}$. Of course the dual $k$-algebra $\Gamma^*$ is isomorphic to the polynomial algebra $k[x]$ with $x$ in degree $2$. As a consequence of Theorem \ref{main thm 1}, the $n$th right derived product $R^n\prod^{\Gamma}_i M_i$ of a set $\{ M_i: i\in I\}$ of graded $\Gamma$-comodules is the local cohomology group $H^n_{(x)}(\prod_{i\in I} \iota M_i)$, where $\iota M_i$ is $M_i$ regarded as a $k[x]$-module via the adjoint action. 

For example, in the case that $I = \mathbb{N}$ and $M_i$ is the $2i$th suspension of the graded $\Gamma$-subcomodule of $\Gamma$ which is $k$-linearly spanned by $x_0, \dots ,x_i$, we have $\iota(M_i) \cong k[x]/x^{i+1}$. While $H^n_{(x)}(M_i) \cong 0$ for each $n>0$ and each $i$, upon taking the product of the graded $\Gamma^*$-modules $k[x]/x^{i+1}$ for all $i\in \mathbb{N}$, we find {\em non-$x$-power-torsion} homogeneous elements like the degree $0$ homogeneous element $(1,1,1, \dots)$. Consequently the localization $x^{-1}\prod_{i\in \mathbb{N}} \iota M_i$ is nontrivial. Hence the first local cohomology group
\begin{align}
\label{iso 34093059} H^1_{(x)}\left( \prod_{i\in I} \iota M_i\right)
  &\cong \coker \left( \prod_{i\in \mathbb{N}} \iota M_i \rightarrow x^{-1}\prod_{i\in \mathbb{N}} \iota M_i\right) 
\end{align}
is nontrivial. As a consequence of Theorem \ref{main thm 1}, \eqref{iso 34093059} is isomorphic to the underlying graded $\Gamma^*$-module $\iota R^1\prod^{\Gamma}_{i\in I}M_i$ of the first right-derived product $R^1\prod^{\Gamma}_{i\in I}M_i$ of the product of the graded $\Gamma$-comodules $M_i$. 

Since $(x)$ is a principal ideal, the local cohomology groups $H^n_{(x)}(M)$ vanish for all $M$ and all $n>1$. Hence, as another consequence of Theorem \ref{main thm 1}, $R^n\prod^{\Gamma}_{i\in I}N_i$ is trivial for all $n>1$ and all sets $\{ N_i\}$ of graded $\Gamma$-comodules.
\end{example}
The above example is a bit facile, since local cohomology over a commutative ring of Krull dimension $1$, like $k[x]$, is very simple. The next example is more subtle, instead involving local cohomology of a ring of Krull dimension {\em zero}, so the ungraded local cohomology is trivial, but the {\em graded} local cohomology can be (and is) nonzero.
\begin{example}
Let $k$ be a field, and let $\Gamma^*$ be any finite-type graded $k$-algebra which is connected, i.e., trivial in negative degrees and isomorphic to $k$ in degree zero. As an illuminating example, we suggest 
\begin{align*}
 \Gamma^* &= k[y_1, y_2, y_3, \dots ]/(y_1^p,y_2^p,y_3^p, \dots),
\end{align*}
with $k$ of characteristic $p>0$, with each $y_i$ primitive, and with all $y_i$ in distinct degrees (for example, when $p=2$, this algebra is isomorphic to the subalgebra $E$ of the Steenrod algebra generated by the Milnor primitives); or for a slightly different example, simply let $\Gamma^*$ be the Steenrod algebra. Since $\Gamma$ is finite-type, we may write $\Gamma$ for the dual coalgebra $(\Gamma^*)^*$ without risk of confusion. {\em Recall our convention that we use cohomological gradings throughout, so that $\Gamma$ is concentrated in degrees $\leq 0$.}

Now let $\Gamma^{\geq -i}$ denote the graded $\Gamma$-subcomodule of $\Gamma$ consisting of all elements of degree $\geq -i$. We continue to write $\iota\Gamma$ to denote $\Gamma$ equipped with the adjoint $\Gamma^*$-action. Then the graded $\Gamma^*$-module $\iota\Gamma$ is isomorphic to the linear dual $\Gamma^{**}$ of $\Gamma^*$, hence is injective in the category of graded $\Gamma^*$-modules (e.g. see Proposition 11.3.12 of \cite{MR738973}). Products of injectives remain injective, so the middle term in the short exact sequence of graded $\Gamma^*$-modules
\begin{equation}\label{ses 031} 0 \rightarrow \prod_{i\geq 0} \Sigma^i\iota(\Gamma^{\geq -i})  \rightarrow \prod_{i\geq 0} \Sigma^{i}\iota\Gamma \rightarrow \prod_{i\geq 0} \Sigma^{i}\iota(\Gamma/\Gamma^{\geq -i})  \rightarrow 0\end{equation}
is injective. Applying the graded local cohomology functor $H^*$ to \eqref{ses 031} then yields an isomorphism for all negative integers $n$:
\begin{align}
\label{iso 0039558}
 H^{1,n}\left( \prod_{i\geq 0} \Sigma^i\iota\Gamma^{\geq -i}\right) &\cong \left\{ (\gamma_{0}, \gamma_{1},\gamma_{2}, \dots ) : \gamma_i\in \Gamma^{n-i}\right\}/Tors^n,
\end{align}
where the notation is as follows:
\begin{itemize}
\item $H^{1,n}\left( \prod_{i\geq 0} \Sigma^i\iota\Gamma^{\geq -i}\right)$ denotes the degree $n$ summand in the graded local cohomology group $H^{1}\left( \prod_{i\geq 0} \Sigma^i\iota\Gamma^{\geq -i}\right)$,
\item and $Tors^n$ denotes the subgroup of $\left(\prod_{i\geq 0}\Sigma^{i}\iota\Gamma\right)^n = \left\{ (\gamma_0, \gamma_{1}, \dots ) : \gamma_i\in \Gamma^{n-i}\right\}$ consisting of all those sequences $(\gamma_0, \gamma_{1}, \dots )$ such that there exists an integer $m$ such that every homogeneous element of $\Gamma^*$ of degree $\geq m$ acts trivially on $(\gamma_0, \gamma_{1}, \dots )$.
\end{itemize}
As a consequence of the main theorems of this paper, we have \begin{align*} H^{1}\left( \prod_{i\geq 0} \Sigma^i\iota\Gamma^{\geq -i}\right) &\cong \iota R^1\prod^{\Gamma}_{i\geq 0} \Sigma^i \Gamma^{\geq -i}.\end{align*} Hence, by the argument used to establish \eqref{iso 0039558}, the first right-derived product $R^1\prod^{\Gamma}$ is nontrivial whenever there exists a sequence $(\gamma_0,\gamma_1,\dots)$ of homogeneous elements of $\Gamma$, with monotone strictly decreasing degrees, and such that, for each integer $m$, some element of $\Gamma^*$ of degree $\geq m$ acts nontrivially on some $\gamma_i$.

For example, if $\Gamma$ is the dual Steenrod algebra, the sequence $(\xi_1,\xi_2,\xi_3,\dots)$ represents a nonzero element in $R^1\prod^{\Gamma}_{i\geq 1} \Sigma^{2(p^i-1)-1}\Gamma^{\geq -2(p^i-1)+1}$. The $\Gamma$-comodule $\Gamma^{\geq -2(p^i-1)+1}$ arises as the homology of an appropriate skeleton of the Eilenberg-Mac Lane spectrum $H\mathbb{F}_p$, so this example demonstrates an explicit nontrivial element on the $1$-line of the $E_2$-page of the Hovey-Sadofsky spectral sequence \eqref{sadofsky ss 2}.
\end{example}

\begin{remark}
Many cases of the results of this paper can be interpreted as having stack-theoretic content. Suppose we are given a commutative Hopf algebra $\Gamma$ over a commutative ring $A$. If $\Gamma$ is smooth over $A$, then the category of $\Gamma$-comodules is equivalent to the category of quasicoherent $\mathcal{O}_{B\mathbb{G}}$-modules, where $\mathcal{O}_{B\mathbb{G}}$ is the structure sheaf of the fppf site of the Artin stack $B\mathbb{G}$ of $\mathbb{G}$-torsors. Here $\mathbb{G}$ is the group scheme represented by $\Spec\Gamma$. In light of this, the question ``For what $n$ does the $n$th right derived functor $R^n\prod^{\Gamma}: \Comod(\Gamma)^I\rightarrow\Comod(\Gamma)$ vanish?'' becomes the question ``For what $n$ does the $n$th right derived functor $R^n\prod: QC\Mod(B\mathbb{G})^I\rightarrow QC\Mod(B\mathbb{G})$ vanish?'' 
This latter question was investigated for Deligne-Mumford stacks in \cite{hogadixu2009}, so the present paper could be seen as, in part, handling for certain Artin stacks the same problem that was investigated for Deligne-Mumford stacks in \cite{hogadixu2009}. But of course our motivations and main applications in this paper are really topological, rather than algebro-geometric. 

Artin stacks are more general than Deligne-Mumford stacks, so the results in this paper are not special cases of those in \cite{hogadixu2009}. Indeed, the results and the methods obtained in this paper are entirely different from the results and the methods involved in \cite{hogadixu2009}. 
\end{remark}

\begin{remark}
This paper examines the derived functors of products in categories of comodules over a coalgebr{\em a}, not a more general coalgebr{\em oid}, for example a Hopf algebroid. Given a suitable generalized homology theory $E_*$, see \cite{MR2337861} for a version of the Hovey-Sadofsky spectral sequence whose input is the derived functor of product in the category of graded $(E_*,E_*E)$-comodules, where now $(E_*,E_*E)$ is a Hopf algebroid and not typically a Hopf algebra (or coalgebra). So there is good topological motivation to try to prove Hopf algebroid analogues of the results in the present paper. A similar remark is also true with sequential limits in place of products throughout. 
\end{remark}

\begin{conventions}\label{conventions}\leavevmode
\begin{itemize}
\item Unless otherwise specified, our modules will be left modules, and our comodules will be right comodules.
\item All gradings in this paper are $\mathbb{Z}$-gradings whenever not otherwise stated.
\item When speaking of a {\em graded} module $M$, if we say that $M$ is injective, we mean that $M$ is injective in the category of {\em graded} modules. This is weaker than saying that $M$ is injective in the category of {\em ungraded} modules: see for example Remarks 3.3.11 in section I.3 of \cite{MR551625}. 
\item When we have a ring $R$ and a left ideal $I$ of $R$, we will write $RI$ for the ideal $I$ regarded as a left $R$-module. Consequently $R/RI$ denotes the left $R$-module given by the cokernel of the inclusion $RI \hookrightarrow R$. While writing $R/RI$ rather than $R/I$ may seem annoying to some readers, it is a common convention, e.g. as in \cite{lam2013first}, and this convention avoids some notational ambiguities: for example, when $A$ is the Steenrod algebra, if we were to write $\hom_A(A/(\Sq^1,\Sq^2,\dots),M)$, it could leave the reader uncertain whether $A/(\Sq^1,\Sq^2,\dots)$ means $A$ modulo the two-sided ideal generated by $\{\Sq^1,\Sq^2,\dots\}$, or the much larger quotient module given by $A$ modulo only the {\em left} ideal generated $\{\Sq^1,\Sq^2,\dots\}$. It is convenient to have the notation $A/A(\Sq^1,\Sq^2,\dots)$ reserved for the latter meaning.
\item At many places in this paper, we give arguments involving annihilators over noncommutative rings, such as Steenrod algebras. These arguments require a bit of care, because some of the nice behavior of annihilators in commutative algebra does not carry over to the noncommutative setting. 
We will follow the notational conventions for annihilators from \cite{lam2013first}: if $R$ is a ring and $M$ is a (left) $R$-module, then $\ann(M)$ denotes the set $\{ r\in R: rm=0\ \forall m\in M\}$, which is a {\em two-sided} ideal in $R$. If $S$ is a subset of $M$, we write $\ann_{\ell}(S)$ for the subset $\{ r\in R: rs=0\ \forall s\in S\}$, which is in general only a {\em left} ideal of $R$. Of course this distinction is only necessary when $R$ is noncommutative. 

In particular, if $M$ is a cyclic left $R$-module, then $M$ is isomorphic to $R/R\ann_{\ell}(g)$, where $g$ is a generator for $M$. We have an equality $R/R\ann(M) = R/R\ann_{\ell}(M)$, but $R/R\ann(M)$ and $R/R\ann_{\ell}(M)$ are {\em not} necessarily isomorphic to $R/R\ann_{\ell}(g)$, hence not necessarily isomorphic to $M$. The discussion in section 2.4 of \cite{lam2013first} is an excellent textbook treatment of this issue. 
\item Given a graded ring $R$ and graded $R$-modules $M$ and $N$, we write $\hom_R(M,N)$ for the degree-preserving $R$-linear morphisms $M\rightarrow N$, and we write $\underline{\hom}_R(M,N)$ for the graded abelian group whose degree $n$ summand\footnote{Some references, e.g. \cite{brunerprimer}, use the opposite grading on $\underline{\hom}_R$---hence the need to give our grading conventions explicitly. The argument for our choice of gradings is that it is the unique one so that $\hom_R(L,\underline{\hom}_R(M,N)) \cong \hom_R(L\otimes_R M,N)$.} is $\hom_R(\Sigma^n M,N)$.
\item 
Throughout, when we have a coalgebra over a commutative ring, we use the standard notations from the theory of Hopf algebroids (from the influential first appendix of \cite{MR860042}, for example): we write $A$ for the commutative ring, and we write $\Gamma$ for the coalgebra. We write $\Gamma^*$ for the $A$-linear dual algebra of $\Gamma$. This includes our discussions of the Steenrod algebra as the motivating example for the theory: we will write $\Gamma$ for the {\em dual} Steenrod algebra, and consequently $\Gamma^*$ for the Steenrod algebra itself.

When we have a finite-type graded coalgebra $\Gamma$, we {\em always} set up the grading so that its dual algebra $\Gamma^*$ is connected\footnote{``Connected'' is a standard term, but to avoid any possible misunderstanding, we include its definition here: to say that the finite-type algebra $\Gamma^*$ is connected is to say that, for each $n$, the degree $n$ summand of $\Gamma^*$ (equivalently, $\Gamma$) is a finite dimensional $A$-vector space, that $\Gamma^*$ is one-dimensional in degree $0$, and that $\Gamma^*$ is trivial in negative degrees.}. Whenever possible, we avoid direct manipulation of the grading on $\Gamma$ itself, in order to completely avoid any ambiguity or confusion about the effect of dualization on the gradings. 

One consequence is that $\iota(\Gamma)$ is concentrated in non{\em positive} degrees, where $\iota$ is the {\em covariant} embedding of graded $\Gamma$-comodules into graded $\Gamma^*$-modules. This fact becomes very useful, starting in Theorem \ref{main thm 1}.
\item We write $\prod$ for a product of graded modules, and $\prod^{\Gamma}$ for the product in the category of graded $\Gamma$-comodules.
\end{itemize}
\end{conventions}

We are grateful to Gabe Angelini-Knoll and Bob Bruner for many conversations about derived limits in comodules and the Sadofsky spectral sequence, and an anonymous referee for helpful comments.

\section{Local cohomology functors associated to sets of ideals.}
\label{Local cohomology functors associated...}
This section consists of elementary notions about generalized local cohomology functors associated to sets of ideals in some (not necessarily commutative) ring. We do not claim originality for the ideas in this section, but we do not know anywhere where this sequence of ideas already appears in the literature.

We first define the preorder of ideal sets of a graded ring.
\begin{definition}\label{def of ideal sets}
Let $R$ be a graded ring. 
\begin{itemize}
\item By an {\em ideal set in $R$} we mean a set $S$ of homogeneous proper left ideals of $R$. 
\item If $S,S^{\prime}$ are ideal sets in $R$, we write $S\leq S^{\prime}$ if, for every element $I$ of $S$, there exists some element $J\in S^{\prime}$ such that $I\supseteq J$. (Intuitively, this says that $S\leq S^{\prime}$ iff $S^{\prime}$ is ``finer'' than $S$.) The relation $\leq$ on ideal sets is transitive and reflexive, hence the collection of ideal sets in $R$ is a preorder. We write $\idealsets(R)$ for this preorder.
\item We say that ideal sets $S$ and $S^{\prime}$ are {\em equivalent} if $S\leq S^{\prime}$ and $S^{\prime}\leq S$.
\end{itemize}
\end{definition}

\begin{definition}[Properties of ideal sets]
We continue to let $R$ be a graded ring. 
\begin{itemize}
\item We will say that an ideal set $S$ in $R$ is {\em connected} if $S$ is connected as a partially ordered set under inclusion, i.e., if $I,J\in S$, then there exists a finite sequence $I = I_0, J_0, I_1, J_1, \dots ,I_{n-1},J_{n-1},I_n,J_n = J$ of elements of $S$ such that $I_h \subseteq J_h$ and $I_{h+1}\subseteq J_h$ for all $h$.
\item We will say that an ideal set $S$ in $R$ is {\em filtered} 
if, for each $I,J\in S$, there exists an element of $S$ contained in both $I$ and $J$.
\end{itemize}
\end{definition}

A non-filtered ideal set at least has a canonically-associated filtered ideal set, its {\em filtered closure}:
\begin{definition}
Given an ideal set $S$ in $R$, let $\overline{S}$ denote the set of all intersections\footnote{To be clear: an element of $\overline{S}$ is a homogeneous left ideal in $R$ which is the intersection of finitely many homogeneous left ideals which are members of $S$.} of finite sets of members of $S$. We call $\overline{S}$ the {\em filtered closure} of $S$. 
\end{definition}
The filtered closure of $S$ is a filtered ideal set in $R$, and $S\leq \overline{S}$. Furthermore, $\overline{S}$ is minimal (up to equivalence) with that property. That is, if $T$ is any filtered ideal set in $R$ such that $S\leq T$, then $\overline{S}\leq T$.

\begin{definition}[Local cohomology theories associated to an ideal set]
 Given an ideal set $S$, we have {\em two} associated degree zero local cohomology functors.
\begin{enumerate}
\item First, we have the functor
\begin{align*}
 h^0_S: \gr\Mod(R) & \rightarrow \gr\Ab \\
        h^0_S(M) &= \underset{I\in S}{\colim}\ \underline{\hom}_R(R/RI,M). \end{align*}
In particular, if $S$ is filtered, then $h^0_S(M)$ is the graded subgroup of $M$ generated by all homogeneous elements which are $I$-torsion for some element $I$ of $S$. 

We write $h^n_S$ for the $n$th right derived functor $R^nh^0_S$ of $h^0_S$.
\item
Meanwhile, if $S$ is filtered, we also have the functor
\begin{align*}
 H^0_S: \gr\Mod(R) & \rightarrow \gr\Mod(R) \end{align*}
given by letting $H^0_S(M)$ be the graded left $R$-submodule of $M$ generated by the subgroup $h^0_S(M)$ of $M$.
We write $H^n_S$ for the $n$th right derived functor $R^nH^0_S$ of $H^0_S$.
\end{enumerate}
\end{definition}
Note that, if $S\leq S^{\prime}$ and $M$ is a graded $R$-module, then every homogeneous element of $M$ which is $I$-torsion for some $I\in S$ is also $J$-torsion for some $J\in S^{\prime}$. If $S^{\prime}$ is also assumed to be filtered, then we have a well-defined map from $\underline{\hom}_R(R/RI,M)$ to $\underset{J\in S^{\prime}}{\colim}\ \underline{\hom}_R(R/RJ,M)$. Consequently, if $S\leq S^{\prime}$ and $S^{\prime}$ is filtered, we get a canonical natural transformation $h^0_S \rightarrow h^0_{S^{\prime}}$.

We now offer a sequence of examples, non-examples, and observations to illustrate the various notions defined in Definition \ref{def of ideal sets}.
\begin{remark}[Why connectedness matters] \label{remark on ideal sets}
If an ideal set $S$ is not connected, we can still make the definition $h^0_S(M) \coloneqq \underset{I\in S}{\colim}\ \underline{\hom}_R(R/RI,M)$, but the resulting abelian group $h^0_S(M)$ can fail to be a subgroup of $M$. For example, let $k$ be a field, and let $R = k[x,y]$. Let $S$ be the ideal set $\left\{ (x),(y)\right\}$. Then $h^0_S(M)$ is the direct sum of the $x$-torsion subgroup of $M$ and the $y$-torsion subgroup of $M$. This is a subgroup of $M\oplus M$, but it is not a subgroup of $M$ itself in any natural way.

The filtered closure $\overline{S}$ of $S$ is $\{ (x), (y), (xy)\}$, so $h^0_{\overline{S}}(M)$ is the $(xy)$-torsion subgroup of $M$, i.e., the set of elements $m\in M$ such that $xym=0$. This {\em is} a subgroup of $M$, not merely a subgroup of $M\oplus M$. What we saw in this example is one case of a general phenomenon: a filtered partially-ordered set is automatically connected, so $h^0_{\overline{S}}(M)$ is a subgroup of $M$ for any ideal set $S$. 

This is also an example of an ideal set $S$ such that $h^0_S$ is not isomorphic to $h^0_{\overline{S}}$. In general, there is no reason to expect $h^0_S$ to coincide with $h^0_{\overline{S}}$\end{remark}
\begin{remark}[When $h^0_S$ and $H^0_S$ differ] \label{remark on ideal sets 2}
Suppose $S$ is filtered. 
If $R$ is commutative, then $\underline{\hom}_R(R/RI,M)$ is in fact an $R$-submodule of $M$, and not only a subgroup. Put another way, when $R$ is commutative, the $I$-torsion in a given $R$-module is a submodule and not merely a subgroup. Consequently the natural transformation $h^0_S\rightarrow H^0_S$ is an isomorphism for all filtered ideal sets $S$, when $R$ is commutative.

On the other hand, when $R$ is noncommutative, $h^0_S$ may differ from $H^0_S$. Here is an explicit example where $h^0_{S}(M)$ fails to be an $R$-submodule of $M$, and consequently $h^0_{S}(M)$ fails to agree with $H^0_{S}(M)$. Let $R$ be the subalgebra $A(1)$ of the mod $2$ Steenrod algebra generated by $\Sq^1$ and $\Sq^2$. Let $S$ be the left ideal in $A(1)$ generated by $\Sq^1$. Then $h^0_{S}(M)$ is simply the graded subgroup of $M$ consisting of the elements of $M$ annihilated by $\Sq^1$. 
In the case $M = A(1)$, we have $\Sq^1\in h^0_{S}(A(1))$, since $\Sq^1\Sq^1 = 0$ in $A(1)$. However, $\Sq^2\Sq^1 \notin h^0_{S}(A(1))$, since $\Sq^1\Sq^2\Sq^1 \neq 0$ in $A(1)$. So $h^0_{S}(A(1))$ is not closed under left $A(1)$-multiplication in $A(1)$, i.e., $h^0_{S}(A(1))$ is not a left $A(1)$-submodule of $A(1)$. Hence $h^0_{S}(A(1))$ fails to coincide with $H^0_{S}(A(1))$: the latter contains $\Sq^2\Sq^1$, while the former does not.
\end{remark}
\begin{remark}[Why equivalence matters] 
If filtered ideal sets $S,S^{\prime}$ in $R$ satisfy $S\leq S^{\prime}$, then the natural monomorphism $h^0_S(M)\hookrightarrow M$ factors uniquely through the natural monomorphism $h^0_{S^{\prime}}(M)\hookrightarrow M$. Consequently we get natural transformations $h^n_S\rightarrow h^n_{S^{\prime}}$ and $H^n_S\rightarrow H^n_{S^{\prime}}$ for each $n$.

If the filtered ideal sets $S$ and $S^{\prime}$ are equivalent, then we have $h^0_S(M) = h^0_{S^{\prime}}(M)$ as subgroups of $M$, so we get natural isomorphisms $h^n_S \cong h^n_{S^{\prime}}$ and $H^n_S \cong H^n_{S^{\prime}}$ for all $n$ as well.
\end{remark}
\begin{remark}[Left exactness of $h^0_S$ and $H^0_S$]
We have much better tools for understanding and calculating the right derived functors $h^*_S$ of $h^0_S$ when we know that $h^0_S$ is left exact. If the ideal set $S$ is filtered, then colimits over $S$ are exact, so
$h^0_S(-) = \underset{I\in S}{\colim}\ \underline{\hom}_R(R/RI,-)$ is indeed left exact. 

It is much less obvious that $H^0_S$ is left exact. This is a topic we take up later, using tools from torsion theory, in Corollary \ref{H0 is left exact}.
\end{remark}

We now give important examples of ideal sets.
\begin{example}[The ideal set of powers of an ideal] \label{examples of ideal sets} 
The famous case of $H^0_{S}(M)$ is the case where $R$ is commutative and concentrated in degree zero, $I$ is an ideal of $R$, and $S$ is the set $S = \{ I, I^2, I^3, \dots\}$ of powers of $I$. In that case, $H^0_{S}(M)$ is simply the classical local cohomology $H^0_I(M)$ of $M$ at the ideal $I$, in the sense of \cite{MR0222093} and \cite{MR3014449}, among many other references. 

The reason for introducing $h^0_{S}$ and $H^0_{S}$ in Definition \ref{def of ideal sets} is that, when generalizing local cohomology from the classical setting to a setting where $R$ is not necessarily commutative, $h^0_{S}$ and $H^0_{S}$ can differ. This may lead to confusion: if, for example, $R$ is noncommutative and $S$ is the set $\{ I,I^2,I^3, \dots\}$ of powers of some left ideal $I$ of $R$, then $h^0_{S}(M) = \underset{n\rightarrow\infty}{\colim}\ \underline{\hom}_R(R/RI^n,M)$ is generally only a subgroup of $M$, not necessarily an $R$-submodule of $M$, so some familiar arguments from classical local cohomology no longer work. For example, it no longer even makes sense to ask whether $h^0_{S}:\gr\Mod(R)\rightarrow\gr\Ab$ is idempotent. On the other hand, it {\em does} make sense to ask whether $H^0_{S}:\gr\Mod(R)\rightarrow\gr\Mod(R)$ is idempotent. The functor $H^0_S$ has some very desirable properties (see Theorem \ref{theta-rationality and torsion}, for example), but the derived functors $H^n_S$ (unlike $h^n_S$) are not generally isomorphic to colimits of $\Ext$-groups, so it can be much less clear how to actually calculate them. 
\end{example}
\begin{example}[The trivial ideal set]\label{examples of ideal sets 2} 
The following example is trivial, but helps to build intuition for the preorder $\idealsets(R)$. 
The preorder $\idealsets(R)$ has a maximal element, which can be taken to be the ideal set $\{ (0)\}$ consisting of simply the zero ideal of $R$. This ideal set is equivalent to any other ideal set in $R$ containing the zero ideal, for example the ideal set consisting of all proper homogeneous left ideals of $R$. For all graded $R$-modules $M$, we have $h^0_{\{(0)\}}(M) = M = H^0_{\{(0)\}}(M)$, so $h^n_{\{(0)\}}(M) = 0 = H^n_{\{(0)\}}(M)$ for all $n>0$. 

The point is that the maximal element in $\idealsets(R)$ is the element whose associated local cohomology theory has $h^0(M)$ consisting of {\em all of $M$}. The partial order $\leq$ on $\idealsets(R)$ has the property that {\em smaller filtered elements $S$ of $\idealsets(R)$ make $h^0_S(M)$ a smaller subgroup of $M$.}
\end{example}
\begin{example}[The ideal set of distinguished ideals of a module]
\label{examples of ideal sets 3} 
Let $R$ be a graded ring, and let $\Theta$ be a graded left $R$-module. Let $\homog(\Theta)$ be the set of homogeneous elements of $\Theta$, and for each $m\in \homog(\Theta)$, let $\ann_{\ell}(m)$ be the left annihilator of $m$. Let $\dist(\Theta)$ be the ideal set $\{ \ann_{\ell}(m) : 0\neq m \in \homog(\Theta)\}$ of $R$. We refer to the members of $\dist(\Theta)$ as the {\em strongly $\Theta$-distinguished ideals of $R$}, and we refer to the members of the filtered closure $\overline{\dist(\Theta)}$ as the {\em $\Theta$-distinguished ideals of $R$.}
\end{example}
\begin{example}[The ideal set $\grad$ of a graded ring]\label{examples of ideal sets 4}
Let $R$ be an $\mathbb{N}$-graded ring, and let $\grad$ be the ideal set $\{ I_1, I_2, I_3, \dots\}$ in $R$, where $I_n$ is the left ideal (equivalently, two-sided ideal) in $R$ generated by all homogeneous elements of degree $\geq n$. Note that $\grad$ is filtered. The ideal set $\grad$ will play a pivotal role in the proof of the most important result in this paper, Theorem \ref{main thm on mitchell coalgebras}.
\end{example}

\section{Comodules as a pretorsion class in modules.}
\label{Comodules as a pretorsion...}

This section covers preliminary notions on the covariant embedding of $\Gamma$-comodules into $\Gamma^*$-modules, as well as preliminary notions on (pre)torsion theories. 

\subsection{Review of the embedding of the category of $\Gamma$-comodules into the category of $\Gamma^*$-modules.}

Let $A$ be a commutative ring, and let $\Gamma$ be a graded $A$-coalgebra which is flat as an $A$-module. Let $\Gamma^*$ denote the $A$-linear dual graded algebra $\underline{\hom}_A(\Gamma,A)$ of $\Gamma$. 
 We have a well-known functor
\begin{equation}
 \label{comodule inclusion functor}
 \iota:\gr\Comod(\Gamma) \rightarrow \gr\Mod(\Gamma^*)
\end{equation}
given by sending a graded $\Gamma$-comodule $M$ to the graded $A$-module $M$, equipped with the ``adjoint'' $\Gamma^*$-action. The adjoint left $\Gamma^*$-action\footnote{To avoid possible confusion, we remark that when $\Gamma$ is not only a coalgebra but a finite-type Hopf algebra, then $\Gamma\cong \Gamma^{**}$ is also a graded $\Gamma^*$-module via the {\em contragredient} action, which differs from the adjoint action by an application of the antipode of $\Gamma^*$.} is the action $\Gamma^*\otimes_A M \rightarrow M$ given by sending $f\otimes m$ to the image of $m$ under the composite map 
\[ M \stackrel{\psi_M}{\longrightarrow} M\otimes_A \Gamma \stackrel{M\otimes f}{\longrightarrow} M\otimes_A A \stackrel{\cong}{\longrightarrow} M.\]
The graded $\Gamma^*$-modules in the essential image of the functor $\iota$ are called {\em rational} modules in the literature, e.g. in the textbook \cite{MR2012570}. The use of the term ``rational'' here is well-established, so we use that term in this paper\footnote{There is some risk of confusion here: topologists, like the author of this paper, are used to the term ``rational'' being reserved for abelian groups which are vector spaces over $\mathbb{Q}$, or spectra which are modules over $H\mathbb{Q}$, or other small variations on this theme. That use of the term ``rational'' is simply unlike the use of the term ``rational'' in the coalgebra literature and in this paper. 
Rational (in the sense of coalgebra) modules over the Steenrod algebra are one of the main objects of study in this paper, but they do not arise as the homology or cohomology of rational (in the sense of topology) spaces or spectra, except in trivial cases.}. 

The following theorem summarizes some established properties of the functor \eqref{comodule inclusion functor}; see 4.1, 4.3, 4.7, 7.1, and 20.1 of \cite{MR2012570} for a comprehensive treatment.
\begin{theorem}\label{review thm} The following claims are each true:
\begin{enumerate}
\item The functor $\iota$ is faithful and its image lies in the full subcategory of $\gr\Mod(\Gamma^*)$ consisting of all graded $\Gamma^*$-modules $M$ such that $M$ is a graded submodule of a graded quotient module of a coproduct of copies of suspensions of $\iota\Gamma$. This full subcategory of $\gr\Mod(\Gamma^*)$ is denoted $\sigma[{}_{\Gamma^*}\Gamma]$.
\item The resulting functor $\gr\Comod(\Gamma)\rightarrow \sigma[{}_{\Gamma^*}\Gamma]$ is full if and only if $\Gamma$ is locally projective as an $A$-module. Consequently, when $\Gamma$ is projective as an $A$-module, we may regard $\gr\Comod(\Gamma)$ as (via the functor $\iota$) a full subcategory of $\gr\Mod(\Gamma^*)$. 
\item If $\Gamma$ is projective as an $A$-module, then the functor $\iota$ is full, faithful, and admits a right adjoint $\tr: \gr\Mod(\Gamma^*)\rightarrow \gr\Comod(\Gamma)$, called the ``rational functor'' or the ``trace functor,'' and which is given on a graded $\Gamma^*$-module $M$ by letting $\tr(M)$ be the graded $\Gamma^*$-submodule $M$ generated by the homogeneous elements $m$ such that $m$ is in the image of a graded $\Gamma^*$-module homomorphism from a rational graded $\Gamma^*$-module to $M$.
\item The functor $\iota: \gr\Comod(\Gamma) \rightarrow \gr\Mod(\Gamma^*)$ is an equivalence of categories if and only if $\Gamma$ is finitely generated and projective as an $A$-module. 
\item In particular, suppose that $\Gamma$ is projective as an $A$-module. Then the rational graded $\Gamma^*$-modules form a full abelian subcategory of the graded $\Gamma^*$-modules, closed under kernels, cokernels, and coproducts. Furthermore, the following are equivalent:
\begin{itemize}
\item Every graded $\Gamma^*$-module is rational.
\item $\Gamma$ is finitely generated as an $A$-module. 
\end{itemize}
\end{enumerate}
\end{theorem}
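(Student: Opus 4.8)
The statement is a compilation of established facts about the covariant (adjoint-action) functor from comodules to modules, so my plan is to reduce each clause to the theory of the \emph{$\alpha$-condition} (local projectivity), as developed by Zimmermann--Huisgen and in \cite{MR2012570}, treating the graded case by the usual adaptation of the ungraded arguments. The unifying tool is the canonical evaluation map $\alpha_L\colon \Gamma^*\otimes_A L \to \underline{\hom}_A(\Gamma,L)$, together with the fact that $\Gamma$ is locally projective over $A$ if and only if $\alpha_L$ is injective for every $A$-module $L$; a projective $A$-module is in particular locally projective.

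\emph{Part 1.} Faithfulness of $\iota$ is immediate, since $\iota$ alters neither the underlying $A$-linear map nor the underlying $A$-module of a comodule morphism. For the containment of the image in $\sigma[{}_{\Gamma^*}\Gamma]$, let $M$ be a graded $\Gamma$-comodule with coaction $\psi_M\colon M\to M\otimes_A\Gamma$; the counit axiom exhibits $\psi_M$ as a split $A$-monomorphism which is moreover $\Gamma^*$-linear onto a $\Gamma^*$-submodule of $M\otimes_A\Gamma$. Writing $M$ as an $A$-linear quotient of a free graded $A$-module $\bigoplus_j\Sigma^{n_j}A$ and tensoring with $\Gamma$ exhibits $M\otimes_A\Gamma$ as a $\Gamma^*$-quotient of $\bigoplus_j\Sigma^{n_j}\iota\Gamma$. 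Hence $\iota M$ is a $\Gamma^*$-submodule of a quotient of a coproduct of suspensions of $\iota\Gamma$.

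\emph{Part 2.} For the ``if'' direction, suppose $\Gamma$ is locally projective and let $g\colon \iota M\to \iota N$ be $\Gamma^*$-linear between rational modules. One checks that $\alpha_N\big((g\otimes_A\Gamma)\psi_M\big)$ and $\alpha_N(\psi_N g)$ agree in $\underline{\hom}_A(\Gamma,N)$ by pairing with an arbitrary $f\in\Gamma^*$ and using that $g$ intertwines the adjoint actions determined by $f$; injectivity of $\alpha_N$ then forces $(g\otimes_A\Gamma)\psi_M=\psi_N g$, i.e.\ $g$ is a comodule map, so $\iota$ is full onto $\sigma[{}_{\Gamma^*}\Gamma]$. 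The converse---recovering the $\alpha$-condition from fullness by testing $\iota$ against cofree comodules $L\otimes_A\Gamma$ and their finitely generated pieces---is the one place where the structure theory of locally projective modules cannot be bypassed, and I expect it to be the main obstacle. Since a projective $A$-module is locally projective, the ``consequently'' clause follows, and from here on we may regard $\gr\Comod(\Gamma)$ as a full subcategory of $\gr\Mod(\Gamma^*)$.

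\emph{Parts 3--5.} Assume $\Gamma$ projective over $A$. First establish that rational modules are closed in $\gr\Mod(\Gamma^*)$ under coproducts and quotients (formal) and under $\Gamma^*$-submodules (again via the $\alpha$-condition); then $\tr(M)$, being the $\Gamma^*$-submodule of $M$ generated by images of maps from rational modules, is a quotient of a coproduct of rationals, hence rational, and the adjunction $\hom_{\gr\Mod(\Gamma^*)}(\iota N,M)\cong\hom_{\gr\Comod(\Gamma)}(N,\tr M)$ is immediate from fullness of $\iota$ and the definition of $\tr$, giving part 3. For part 4: if $\Gamma$ is finitely generated projective then so is $\Gamma^*$, $\Gamma\cong\Gamma^{**}$ canonically, and a coaction $M\to M\otimes_A\Gamma\cong\underline{\hom}_A(\Gamma^*,M)$ is exactly a left $\Gamma^*$-action, naturally and compatibly with morphisms, so $\iota$ is an equivalence; conversely, if $\iota$ is an equivalence it is in particular full (so $\Gamma$ is locally projective by part 2) and essentially surjective (so every $\Gamma^*$-module is rational), and a finiteness argument applied to the cofree presentation of $\Gamma^*\in\sigma[{}_{\Gamma^*}\Gamma]$ forces $\Gamma$ to be finitely generated over $A$, whence projective (a finitely generated locally projective module is projective). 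Part 5 is then bookkeeping: the essential image of a fully faithful functor with the stated closure properties is a full abelian subcategory of $\gr\Mod(\Gamma^*)$ closed under kernels, cokernels and coproducts, and ``every graded $\Gamma^*$-module is rational $\Leftrightarrow$ $\Gamma$ is finitely generated'' is the criterion of part 4 restated under the standing assumption that $\Gamma$ is projective.
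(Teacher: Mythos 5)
The paper does not prove Theorem~\ref{review thm}. It is explicitly framed as a survey of ``established properties'' of the functor $\iota$, with the proof delegated entirely to sections 4.1, 4.3, 4.7, 7.1, and 20.1 of Brzezinski--Wisbauer \cite{MR2012570}. The only piece the paper reproves is Proposition~\ref{rationals closed under quots and subs} (closure of rational modules under coproducts, submodules, and quotients), and it does so only to make explicit that the argument generalizes verbatim to $\Theta$-rational modules. So there is no paper proof of Theorem~\ref{review thm} for your sketch to be compared against; you are attempting something the paper deliberately does not attempt.

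On its own terms your sketch is in the right spirit---parts 1, 3, and 5 are essentially the standard arguments, and the ``if'' direction of part 2 via injectivity of the $\alpha$-map $\Gamma^*\otimes_A L\to\underline{\hom}_A(\Gamma,L)$ is correct---but it has two genuine gaps that you flag but do not close. First, the converse in part 2 (fullness of $\iota$ onto $\sigma[{}_{\Gamma^*}\Gamma]$ implies $\Gamma$ is locally projective over $A$) is not a formality: you write ``I expect it to be the main obstacle,'' and indeed it is; the standard proof (see 4.3 and 42.10--42.11 of \cite{MR2012570}) goes through the characterization of local projectivity by the injectivity of all $\alpha_L$ and requires an argument that is not recovered simply by ``testing against cofree comodules.'' Leaving this unproved leaves a hole in part 2, and hence in the ``consequently'' clauses of parts 3--5 that lean on fullness. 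Second, your converse in part 4 is too loose. The clean argument at the relevant level of generality is: if $\iota$ is an equivalence then $\Gamma^*$, as a graded left $\Gamma^*$-module via the adjoint action, is rational; the element $1\in\Gamma^*$ is then a rational element, which forces its coaction (equivalently, its image under a would-be structure map $\Gamma^*\to\Gamma^*\otimes_A\Gamma$ restricting the $A$-linear evaluation) to lie in a finitely generated $A$-submodule of $\Gamma^*\otimes_A\Gamma$, and this forces $\Gamma$ to be finitely generated over $A$; a finitely generated locally projective $A$-module is projective, closing the loop. Your phrase ``a finiteness argument applied to the cofree presentation of $\Gamma^*$'' gestures at this but does not supply the step where finite generation of $\Gamma$ actually falls out. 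Given that the paper outsources all of this to \cite{MR2012570}, the cleanest fix is simply to cite the relevant results there rather than reconstruct them.
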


In Proposition \ref{rationals closed under quots and subs}, we offer a proof of part of Theorem \ref{review thm}, in order to make it clear (as explained in Remark \ref{rationals closed under quots and subs 2}) that the proof generalizes from rational modules to $\Theta$-rational modules.
\begin{prop} \label{rationals closed under quots and subs}
Suppose that $\Gamma^*$ is projective as an $A$-module. Then the following claims are each true:
\begin{enumerate}
\item 
Every coproduct of rational graded $\Gamma^*$-modules is rational.
\item 
Every graded submodule of a rational graded $\Gamma^*$-module is rational.
\item
Every graded quotient of a rational graded $\Gamma^*$-module is rational.
\end{enumerate}
\end{prop}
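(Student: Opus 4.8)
The plan is to work through the basic dictionary between comodule structures and rational module structures. Recall that a graded $\Gamma^*$-module $M$ is rational precisely when it admits a coassociative, counital graded coaction $\psi_M\colon M\rightarrow M\otimes_A\Gamma$ inducing the given action, i.e. $f\cdot m=(\mathrm{id}_M\otimes f)(\psi_M(m))$ for all $f\in\Gamma^*$ and $m\in M$. In each of the three cases I would construct the required coaction explicitly from the coaction(s) already present, then check coassociativity, counitality, and compatibility with the $\Gamma^*$-action; in every case these checks reduce (using flatness of $\Gamma$ over $A$ to identify the relevant tensor products as submodules of their counterparts ``upstairs'') to the corresponding statements for the input modules, so the only nonformal ingredient is one containment lemma, discussed below.

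Part (1) is entirely routine: if $M_\lambda$ carries a coaction $\psi_{M_\lambda}$ for each $\lambda$, then since $-\otimes_A\Gamma$ commutes with coproducts, $\bigoplus_\lambda\psi_{M_\lambda}$ is a coaction on $M=\bigoplus_\lambda M_\lambda$ valued in $\bigoplus_\lambda(M_\lambda\otimes_A\Gamma)\cong M\otimes_A\Gamma$, and coassociativity, counitality, and compatibility with the $\Gamma^*$-action are verified one summand at a time. No hypothesis beyond the definitions is used here.

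For parts (2) and (3) the single nontrivial point is: if $M$ is rational with coaction $\psi_M$ and $N\subseteq M$ is a graded $\Gamma^*$-submodule, then $\psi_M(N)\subseteq N\otimes_A\Gamma$, where $N\otimes_A\Gamma$ is regarded as a submodule of $M\otimes_A\Gamma$ via the flatness of $\Gamma$. Granting this, part (2) is immediate: $\psi_N\coloneqq\psi_M|_N$ is a coassociative, counital coaction on $N$ whose adjoint action is the restriction of that of $M$, namely the submodule action, so $N$ is rational. Part (3) follows too: for a graded $\Gamma^*$-submodule $K\subseteq M$ with quotient $Q=M/K$, the containment gives $\psi_M(K)\subseteq K\otimes_A\Gamma$, and since $\Gamma$ is flat the sequence $0\rightarrow K\otimes_A\Gamma\rightarrow M\otimes_A\Gamma\rightarrow Q\otimes_A\Gamma\rightarrow 0$ is exact, so the composite $M\xrightarrow{\psi_M}M\otimes_A\Gamma\twoheadrightarrow Q\otimes_A\Gamma$ kills $K$ and descends to a coaction $\psi_Q$ on $Q$; coassociativity, counitality, and compatibility with the action of $\Gamma^*$ descend from $M$, so $Q$ is rational.

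It remains to prove the containment $\psi_M(N)\subseteq N\otimes_A\Gamma$, and this is the step I expect to be the main obstacle, since it is the only place the projectivity hypothesis is used. The tool is the dual-basis / $\alpha$-condition machinery of \cite{MR2012570}: under the projectivity assumptions in force, for any $A$-submodule $N$ of an $A$-module $M$, an element $x\in M\otimes_A\Gamma$ lies in the submodule $N\otimes_A\Gamma$ if and only if $(\mathrm{id}_M\otimes f)(x)\in N$ for every $f\in\Gamma^*$. (When $A$ is a field this is transparent: write $x=\sum_i m_i\otimes\gamma_i$ with the $\gamma_i$ linearly independent, pick $f_i\in\Gamma^*$ dual to them, and note $(\mathrm{id}_M\otimes f_i)(x)=m_i$.) Applying this with $x=\psi_M(n)$ for homogeneous $n\in N$, we have $(\mathrm{id}_M\otimes f)(\psi_M(n))=f\cdot n\in N$ because $N$ is a $\Gamma^*$-submodule, whence $\psi_M(n)\in N\otimes_A\Gamma$. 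This same argument applies verbatim with ``rational'' replaced by ``$\Theta$-rational'' for a fixed module $\Theta$, which is the point of Remark \ref{rationals closed under quots and subs 2}; the only real work in the whole proposition is this one instance of the dual-basis lemma.
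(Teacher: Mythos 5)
Your proof of the three numbered claims is correct, but it takes a genuinely different route than the paper does. The paper works entirely inside the module-theoretic characterization $\sigma[{}_{\Gamma^*}\Gamma]$ (graded submodules of graded quotients of coproducts of suspensions of $\iota\Gamma$): closure under coproducts and submodules is read off directly from that description, and closure under quotients is obtained by a short pushout argument. You instead use the \emph{comodule} characterization of rationality (a module is rational iff it carries a compatible coassociative counital coaction) and reduce everything to the dual-basis/$\alpha$-condition lemma that $\psi_M(N)\subseteq N\otimes_A\Gamma$ for a $\Gamma^*$-submodule $N$. Both approaches are legitimate, and yours has the pedagogical virtue of making the coalgebra structure do visible work; the paper's approach has the virtue of being purely module-theoretic, which matters for the reason below.

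There is one genuine error, in your closing sentence. You assert that "this same argument applies verbatim with 'rational' replaced by '$\Theta$-rational' for a fixed module $\Theta$," and you point to Remark \ref{rationals closed under quots and subs 2} as the reason this matters. But for an arbitrary graded $\Gamma^*$-module $\Theta$ there is no coaction: $\Theta$-rationality is \emph{defined} purely module-theoretically as being a graded submodule of a graded quotient of a coproduct of suspensions of $\Theta$, and no coalgebra structure (or $\alpha$-condition) on $\Theta$ is available. Your argument uses the coaction essentially in every step, so it does not generalize. It is precisely the paper's structural argument — which never invokes a coaction, only the subquotient-of-coproducts description — that transfers verbatim to $\Theta$-rational modules, and this transfer is not optional: Remark \ref{rationals closed under quots and subs 2} is used in the proof of Theorem \ref{theta-rationality and torsion}. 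So while your proof of Proposition \ref{rationals closed under quots and subs} itself is fine, if it were substituted for the paper's proof, Remark \ref{rationals closed under quots and subs 2} would no longer be supported and Theorem \ref{theta-rationality and torsion} would have a gap.
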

\begin{proof}\leavevmode
\begin{itemize}
\item[(1) and (2)]
It follows immediately from the definition of $\sigma[{}_{\Gamma^*}\Gamma]$ that it is closed under coproducts and graded submodules. By the first part of Theorem \ref{review thm}, $\sigma[{}_{\Gamma^*}\Gamma]$ agrees with the category of rational graded $\Gamma^*$-modules.
\item[(3)]
Suppose that $Q$ is a graded quotient of a rational graded $\Gamma^*$-module $M$. Since $M$ is rational, it embeds into a quotient module $F^{\prime}$ of a coproduct $F$ of suspensions of copies of $\iota\Gamma$. We have the diagram of graded $\Gamma^*$-modules
\begin{equation}\label{diag 195949}\xymatrix{
 & M\ar@{->>}[ld] \ar@{^{(}->}[rd] & & F \ar@{->>}[ld] \\
 Q & & F^{\prime} &
}\end{equation}
and, taking the pushout of the subdiagram of \eqref{diag 195949} containing $M$ and $Q$ and $F^{\prime}$, we get a graded $\Gamma^*$-module into which $Q$ embeds, and which is also a quotient of $F$, since epimorphisms and monomorphisms are each stable under pushouts in a category of modules over a ring. Hence $Q$ is in $\sigma[{}_{\Gamma^*}\Gamma]$, hence $Q$ is rational.
\end{itemize}
\end{proof}

\begin{definition}
Let $R$ be a graded ring, and let $\Theta$ be a graded left $R$-module. We will call a graded left $R$-module $M$ {\em $\Theta$-rational} if $M$ is a graded submodule of a graded quotient module of a coproduct of copies of suspensions of $\Theta$.
\end{definition}
\begin{remark}\label{rationals closed under quots and subs 2}
Note that Proposition \ref{rationals closed under quots and subs} remains true, with the same proof, if we replace ``rational'' with ``$\Theta$-rational'' throughout.
\end{remark}

\subsection{Distinguished ideals and distinguished torsion.}
\label{Distinguished ideals and...}

Suppose that $\Gamma$ is projective over $A$. 
Then, as a consequence of Theorem \ref{review thm}, the graded $\Gamma$-comodules form a particularly nice subcategory of the graded $\Gamma^*$-modules. Here ``nice'' means, in particular, full and coreflective and abelian. It is not obvious, at a glance, how to tell if a given graded $\Gamma^*$-module actually lives in that nice subcategory. 
Consequently, one would like to have a purely module-theoretic characterization of that subcategory: that is, given a graded $\Gamma^*$-module $M$, we would like to be able to determine whether $M$ is rational by means of some criterion which refers only to the $\Gamma^*$-action on $M$. 
One of our tasks (completed in Theorem \ref{main thm 1}) in this section is to show that there is indeed such a criterion: it is the property of being {\em distinguished-torsion}, which we now define.

\begin{prop}\label{prop-def of distinguished}
Let $R$ be a graded ring, and let $\Theta$ be a graded left $R$-module. 
Let $I$ be a homogeneous left ideal of $R$. Then the following conditions are equivalent:
\begin{itemize}
\item There exists an exact sequence of graded left $R$-modules
\[ 0 \rightarrow I \stackrel{f}{\longrightarrow} R \rightarrow \Sigma^n \Theta \]
for some integer $n$, where $f$ is the canonical inclusion of $I$ into $R$.
\item $\Theta$ contains a graded $R$-submodule which is isomorphic to a suspension of $R/RI$.
\item $I$ is a member of the ideal set $\dist \Theta$ defined in Example \ref{examples of ideal sets 3}.
\end{itemize}
\end{prop}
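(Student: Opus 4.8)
The plan is to prove the three conditions equivalent by running the cycle $(3)\Rightarrow(1)\Rightarrow(2)\Rightarrow(3)$. Each implication is essentially a formal unwinding of definitions, so I do not anticipate any real obstacle; the only thing that needs a little care is keeping track of the internal degree of the homogeneous element involved and the matching suspension, together with the (standing, implicit) hypothesis that $I$ is a \emph{proper} left ideal.

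For $(3)\Rightarrow(1)$, I would start from a nonzero homogeneous $m\in\Theta$ with $\ann_\ell(m)=I$, say $m$ of internal degree $d$, and consider the $R$-linear map $\varphi\colon R\to\Theta$, $r\mapsto rm$. This raises internal degree by $d$, hence is a degree-preserving homomorphism $R\to\Sigma^n\Theta$ for the appropriate $n$ (with our conventions $n=-d$, but the exact value is irrelevant). By definition of $\ann_\ell$, $\ker\varphi=I$, so $0\to I\xrightarrow{f}R\xrightarrow{\varphi}\Sigma^n\Theta$ is exact with $f$ the canonical inclusion, which is $(1)$. For $(1)\Rightarrow(2)$, given such an exact sequence $0\to I\xrightarrow{f}R\xrightarrow{g}\Sigma^n\Theta$ with $f$ the canonical inclusion, exactness at $R$ gives $\ker g=\operatorname{im}f=I\subseteq R$, so $g$ factors as $R\twoheadrightarrow R/RI\hookrightarrow\Sigma^n\Theta$; thus $\Sigma^{-n}(R/RI)$ is a graded $R$-submodule of $\Theta$ isomorphic to a suspension of $R/RI$, which is $(2)$.

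For $(2)\Rightarrow(3)$, I would take a graded $R$-submodule $N\subseteq\Theta$ together with an isomorphism $h\colon\Sigma^n(R/RI)\xrightarrow{\ \cong\ }N$, and set $m\coloneqq h(\overline 1)$, the image of the residue class of $1\in R$ (suitably suspended). Since $I$ is a proper left ideal, $\overline 1\neq 0$ in $R/RI$, so $m$ is a nonzero homogeneous element of $\Theta$; and for $r\in R$ one has $rm=0$ iff $r\overline 1=0$ in $R/RI$ iff $r\in I$, so $\ann_\ell(m)=I$, i.e.\ $I\in\dist\Theta$, giving $(3)$. The only point worth flagging is this last use of properness of $I$ to ensure $m\neq 0$; it is consistent with the convention of Definition \ref{def of ideal sets} that the ideals occurring in ideal sets are proper. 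Since no step in this cycle is genuinely hard, I expect the ``main obstacle'' to amount only to stating the suspension conventions precisely enough that the degree bookkeeping in $(3)\Rightarrow(1)$ and $(1)\Rightarrow(2)$ is unambiguous.
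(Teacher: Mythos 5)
Your proof is correct and is the elementary verification the paper alludes to when it simply writes ``Elementary''; the cycle $(3)\Rightarrow(1)\Rightarrow(2)\Rightarrow(3)$ is a natural way to organize the bookkeeping, and your observation that the implicit properness of $I$ (built into Definition~\ref{def of ideal sets} of an ideal set, and automatic for $\ann_\ell(m)$ when $m\neq 0$) is what makes $(2)\Rightarrow(3)$ go through is exactly the one point that deserves to be flagged.
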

\begin{proof}
Elementary.
\end{proof}
\begin{definition}\label{def of distinguished}
Let $R,\Theta$ be as in Proposition \ref{prop-def of distinguished}.
\begin{itemize}
\item
As in Example \ref{examples of ideal sets 3}, we call a left ideal $I$ of $R$ {\em strongly $\Theta$-distinguished} if it is homogeneous and satisfies the equivalent conditions of Proposition \ref{prop-def of distinguished}. Similarly, we call a homogeneous left ideal $I$ {\em $\Theta$-distinguished} if it contains the intersection of a finite set of strongly $\Theta$-distinguished left ideals. 
\item Let $\dist \Theta$ be the ideal set of $\Theta$-distinguished left ideals of $R$. As in Definition \ref{def of ideal sets}, we write $\overline{\dist{\Theta}}$ for the filtered closure of $\dist{\Theta}$. 
We say that a left $R$-module $M$ is {\em $\Theta$-distinguished-torsion} if the inclusion $H^0_{\overline{\dist \Theta}}(M) \hookrightarrow M$ is an isomorphism. 
\item
The most important case of these notions is when $R = \Gamma^*$, the dual $A$-algebra of an $A$-coalgebra $\Gamma$ which is projective as an $A$-module, and $\Theta = \iota\Gamma$. In that case we write {\em distinguished ideal}, {\em distinguished torsion}, $h^0_{\dist}$, and $H^0_{\dist}$ as shorthand for $\iota\Gamma$-distinguished ideal, $\iota\Gamma$-distinguished torsion, $h^0_{\overline{\dist(\iota\Gamma)}}$, and $H^0_{\overline{\dist(\iota\Gamma)}}$, respectively.
\end{itemize}
\end{definition}

\begin{theorem}\label{theta-rationality and torsion}
Let $R$ be a graded ring and let $\Theta$ be a graded left $R$-module. Let $M$ be a graded left $R$-module. Then the following are equivalent:
\begin{enumerate}
\item $M$ is $\Theta$-rational.
\item For every homogeneous $m\in M$, the annihilator of $m$ is a $\Theta$-distinguished left ideal of $R$.
\item $M$ is $\Theta$-distinguished torsion.
\end{enumerate}
\end{theorem}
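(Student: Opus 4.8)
The plan is to establish the cycle of implications $(1)\Rightarrow(2)\Rightarrow(3)\Rightarrow(1)$. Before doing so I would record two preliminary facts that get used repeatedly. First, suspension leaves the underlying $R$-module structure unchanged, so $\ann_{\ell}$ of a homogeneous element is unaffected by suspending its ambient module; and, by Proposition~\ref{rationals closed under quots and subs} together with Remark~\ref{rationals closed under quots and subs 2}, the class of $\Theta$-rational graded $R$-modules is closed under coproducts, graded submodules, graded quotients, and suspensions---a closure that is purely formal, resting only on the stability of monomorphisms and epimorphisms under pushout in a module category, so no projectivity hypothesis is needed here. Second, because $\overline{\dist\Theta}$ is filtered and the $\Theta$-distinguished ideals are closed upward under inclusion, for any graded $R$-module $M$ the subgroup $h^0_{\overline{\dist\Theta}}(M)$ is exactly the graded subgroup of $M$ generated by the homogeneous $m$ with $\ann_{\ell}(m)$ $\Theta$-distinguished, and $H^0_{\overline{\dist\Theta}}(M)$ is the graded $R$-submodule those elements generate; thus condition (3) is the statement that $M$ is generated \emph{as an $R$-module} by its homogeneous elements with $\Theta$-distinguished annihilator.

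For $(1)\Rightarrow(2)$ I would take a presentation of $M$ as a graded submodule of a graded quotient $q\colon F\twoheadrightarrow Q$ of a coproduct $F$ of suspensions of $\Theta$, and, given a nonzero homogeneous $m\in M$, lift $m$ along the graded epimorphism $q$ to a homogeneous $\widetilde{m}\in F$. Since $\widetilde{m}$ has only finitely many nonzero coordinates $\theta_1,\dots,\theta_k$---each a nonzero homogeneous element of a suspension of $\Theta$, hence with $\ann_{\ell}(\theta_i)$ strongly $\Theta$-distinguished---the ideal $\ann_{\ell}(\widetilde{m})=\bigcap_i\ann_{\ell}(\theta_i)$ is $\Theta$-distinguished; and since $m=q(\widetilde{m})$ with $M\hookrightarrow Q$ a submodule inclusion, $\ann_{\ell}(m)\supseteq\ann_{\ell}(\widetilde{m})$, so $\ann_{\ell}(m)$ is $\Theta$-distinguished by upward closure. (The case $m=0$ is vacuous, and for $m\neq0$ the ideal $\ann_{\ell}(m)$ is automatically proper, hence an honest member of $\dist\Theta$.) The implication $(2)\Rightarrow(3)$ is then immediate: if every homogeneous element of $M$ has $\Theta$-distinguished annihilator, then every homogeneous element of $M$ lies in $h^0_{\overline{\dist\Theta}}(M)$, so $h^0_{\overline{\dist\Theta}}(M)=M$ and hence $H^0_{\overline{\dist\Theta}}(M)=M$.

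For $(3)\Rightarrow(1)$ I would use the second preliminary fact to write $M$ as the sum of the cyclic submodules $Rm$, taken over homogeneous $m\in M$ with $\ann_{\ell}(m)$ $\Theta$-distinguished; then $M$ is a graded quotient of $\coprod_m Rm$, so by the closure properties it is enough to show each such $Rm$ is $\Theta$-rational. Fixing such an $m$, I would choose nonzero homogeneous $\theta_1,\dots,\theta_k\in\Theta$ with $\ann_{\ell}(m)\supseteq I_1\cap\dots\cap I_k$, where $I_i=\ann_{\ell}(\theta_i)$, and observe that $r\mapsto(r\theta_1,\dots,r\theta_k)$ defines a graded $R$-module map $R\to\coprod_{i=1}^k\Sigma^{c_i}\Theta$ (with the $c_i$ chosen to make it degree-preserving) whose kernel is $\bigcap_i I_i$. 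This exhibits $R/(I_1\cap\dots\cap I_k)$ as a graded submodule of a finite coproduct of suspensions of $\Theta$, hence $\Theta$-rational; its graded quotient $R/R\ann_{\ell}(m)$ is then $\Theta$-rational, and $Rm$, being a suspension of $R/R\ann_{\ell}(m)$, is $\Theta$-rational too.

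I expect the direction $(3)\Rightarrow(1)$ to be where the care is concentrated. The subtleties there are not deep but are easy to slip on: one must genuinely use that condition (3), stated via $H^0$ rather than $h^0$, supplies $R$-module generation (not merely additive generation) of $M$ by torsion elements; one must track suspensions so that the maps $R\to\Sigma^{c}\Theta$ really are degree-preserving; and one must lean on the \emph{formal} stability of $\Theta$-rationality under subquotients and coproducts rather than on anything special about $\Gamma^*$. Once the unwinding of $h^0_{\overline{\dist\Theta}}$ and $H^0_{\overline{\dist\Theta}}$ in the first paragraph is in place, the remaining implications are bookkeeping.
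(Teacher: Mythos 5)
Your proof is correct, and it takes a subtly different (and in fact safer) logical route than the paper. The paper dispatches $(2)\Leftrightarrow(3)$ with ``easy to see from the definition'' and then proves $(1)\Leftrightarrow(2)$. But only $(2)\Rightarrow(3)$ is genuinely immediate (from $h^0_{\overline{\dist\Theta}}(M)\subseteq H^0_{\overline{\dist\Theta}}(M)$); the reverse implication $(3)\Rightarrow(2)$ would require knowing that every homogeneous element of the $R$-submodule \emph{generated} by the torsion elements is itself torsion, and when $R$ is noncommutative this is exactly the $h^0\neq H^0$ phenomenon of Remark~\ref{remark on ideal sets 2}, so it is not obvious from the definitions alone. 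Your cycle $(1)\Rightarrow(2)\Rightarrow(3)\Rightarrow(1)$ avoids this: you prove $(3)\Rightarrow(1)$ directly from the weaker hypothesis (generation as an $R$-module rather than additive generation), and then $(3)\Rightarrow(2)$ follows transitively through $(1)$. The substance is otherwise identical to the paper's argument---your $(1)\Rightarrow(2)$ is the same lifting-and-finite-support computation, and your $(3)\Rightarrow(1)$ is the paper's $(2)\Rightarrow(1)$ with the index set of the coproduct shrunk to the homogeneous elements lying in $h^0_{\overline{\dist\Theta}}(M)$, which is exactly the adjustment needed to start from $(3)$. You are also right to flag the $m=0$ corner case, which the paper's statement glosses over (distinguished ideals are proper, so $\ann_{\ell}(0)=R$ is technically excluded, and condition (2) should be read as quantifying over nonzero $m$).
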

\begin{proof}
It is easy to see from the definition of distinguished torsion that conditions 2 and 3 are equivalent, so the rest of this proof consists of showing that conditions 1 and 2 are equivalent.
\begin{itemize}
\item 
Suppose that $M$ is $\Theta$-rational, and that $m$ is a homogenous element of $M$.
Embed $M$ into a quotient $Q$ of a coproduct of copies of $\Theta$ via a map $f: M \rightarrow Q$.
Since $f$ is injective, we have $\ann_{\ell}(f(m)) = \ann_{\ell}(m)$.
Now choose a surjective left $R$-module map $\sigma: F \rightarrow Q$ where $F$ is a coproduct of
copies of $\Theta$, and choose a homogeneous element $m^{\prime}$ in $F$ such that 
$\sigma(m^{\prime}) = f(m)$.
Since coproducts in categories of modules are direct sums, the element $m^{\prime}\in F$ has only finitely many nonzero components in summands $\Theta$ of $F$. Write $m^{\prime}_1, \dots ,m^{\prime}_n$ for these homogeneous nonzero elements of $\Theta$. 
We now have the chain of equalities and containments
\begin{align*}
 \bigcap_{i=1}^n \ann_{\ell}(m^{\prime}_i) &= \ann_{\ell}(m^{\prime}) \\
  &\subseteq \ann_{\ell}(\sigma(m^{\prime})) \\
  &= \ann_{\ell}(f(m)) \\
  &= \ann_{\ell}(m),
\end{align*}
so $\ann_{\ell}(m)$ contains the intersection $\bigcap_{i=1}^n \ann_{\ell}(m^{\prime}_i)$ of the left annihilators of a finite set of homogeneous elements of $\Theta$, i.e., $\ann_{\ell}(m)$ is a $\Theta$-distinguished left ideal of $R$. 
\item
Suppose conversely that the annihilator of every homogeneous element of $M$ is a $\Theta$-distinguished left ideal of $R$. Let $\homog(M)$ denote the set of homogeneous elements of $M$, and consider the graded $R$-module morphism \[ \epsilon_M: \coprod_{m\in \homog(M)} \Sigma^{\left| m\right|} R/R\ann_{\ell}(m) \rightarrow M\]
which sends the summand $\Sigma^{\left| m\right|} R/R\ann_{\ell}(m)$ corresponding to $m\in \homog(M)$ to $M$ via the map $\Sigma^{\left| m\right|} R/R\ann_{\ell}(m) \rightarrow M$ sending $1$ to $m$.
For each $m\in \homog(M)$, choose a finite set $\gamma_{m,1}, \dots ,\gamma_{m,n_m}$ of homogeneous elements of $\Theta$ such that $\ann_{\ell}(m)$ contains $\cap_{i=1}^{n_m}\ann_{\ell}(\gamma_{m,i})$.
Then $R/R\left( \cap_{i=1}^{n_m}\ann_{\ell}(\gamma_{m,i})\right)$ surjects on to $R/R\ann_{\ell}(m)$.
We also have the graded $R$-module monomorphism 
\begin{equation}\label{map 30499949ff} R/R\left(\cap_{i=1}^{n_m}\ann_{\ell}(\gamma_{m,i}) \right)
  \rightarrow \coprod_{i=1}^{n_m} R/R\ann_{\ell}(\gamma_{m,i}) \end{equation}
which sends any given element $x\in R/R\left(\cap_{i=1}^{n_m}\ann_{\ell}(\gamma_{m,i})\right)$ to the 
element 
\[ \left( x\mod \ann_{\ell}(\gamma_{m,1}),\ \ x\mod \ann_{\ell}(\gamma_{m,2}), \dots ,\ \ x\mod \ann_{\ell}(\gamma_{m,n_m}) \right)\]
of $\coprod_{i=1}^{n_m} R/R\ann_{\ell}(\gamma_{m,i})$. Since each $\ann_{\ell}(\gamma_{m,i})$ is strongly $\Theta$-distinguished, each $R/R\ann_{\ell}(\gamma_{m,i})$ is $\Theta$-rational. Since \eqref{map 30499949ff} is monic, Remark \ref{rationals closed under quots and subs 2} gives us that $R/R\left(\cap_{i=1}^{n_m}\ann_{\ell}(\gamma_{m,i})\right)$ is also $\Theta$-rational. Since $R/R\ann_{\ell}(m)$ is a quotient of $R/R\left(\cap_{i=1}^{n_m}\ann_{\ell}(\gamma_{m,i})\right)$, Remark \ref{rationals closed under quots and subs 2} gives us the $\Theta$-rationality of $R/R\ann_{\ell}(m)$, and it also gives us that $\coprod_{m\in \homog(M)} \Sigma^{\left| m\right|} R/R\ann_{\ell}(m)$ is $\Theta$-rational. Finally, one more application of Remark \ref{rationals closed under quots and subs 2} gives us that $M$ is $\Theta$-rational, since $\epsilon_M$ is surjective.
\end{itemize}
\end{proof}

\begin{corollary}\label{rational iff dist-torsion}
Let $\Gamma$ be a coalgebra which is projective over a commutative ring $A$, and let $M$ be a graded left $\Gamma^*$-module. Then $M$ is rational if and only if $M$ is distinguished-torsion.
\end{corollary}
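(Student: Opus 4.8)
The plan is to obtain Corollary \ref{rational iff dist-torsion} as an immediate consequence of Theorem \ref{theta-rationality and torsion} by specializing the general setup to the particular situation of a comodule. Concretely, I would take $R = \Gamma^*$ and $\Theta = \iota\Gamma$ in Theorem \ref{theta-rationality and torsion}, so that the three equivalent conditions there become: (1) $M$ is $\iota\Gamma$-rational, (2) every homogeneous element of $M$ has $\iota\Gamma$-distinguished annihilator, and (3) $M$ is $\iota\Gamma$-distinguished torsion. By the shorthand fixed in Definition \ref{def of distinguished}, condition (3) is exactly the statement that $M$ is distinguished-torsion in the sense used in the corollary, so the only real content to check is that ``$\iota\Gamma$-rational'' (in the sense of the Definition preceding Remark \ref{rationals closed under quots and subs 2}) coincides with ``rational'' (in the sense of being in the essential image of $\iota$, as in Theorem \ref{review thm}).

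The first step, then, is to verify that these two notions of rationality agree. By part (1) of Theorem \ref{review thm}, the essential image of $\iota: \gr\Comod(\Gamma)\rightarrow\gr\Mod(\Gamma^*)$ is the full subcategory $\sigma[{}_{\Gamma^*}\Gamma]$ consisting of those graded $\Gamma^*$-modules that are a graded submodule of a graded quotient of a coproduct of suspensions of copies of $\iota\Gamma$. But this is verbatim the definition of an $\iota\Gamma$-rational module given just before Remark \ref{rationals closed under quots and subs 2} (with $\Theta = \iota\Gamma$). Hence the two notions are literally the same subcategory of $\gr\Mod(\Gamma^*)$, and nothing further is needed here. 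I should note in passing that the hypothesis that $\Gamma$ is projective over $A$ guarantees, via Theorem \ref{review thm}(2), that this subcategory really is the image of the full and faithful embedding $\iota$, so that the word ``rational'' is being used consistently with the comodule-theoretic meaning; but for the equivalence asserted in the corollary, only the identification of subcategories is used.

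The second and final step is to quote Theorem \ref{theta-rationality and torsion}: with $R = \Gamma^*$ and $\Theta = \iota\Gamma$, conditions (1) and (3) of that theorem are equivalent, i.e., $M$ is $\iota\Gamma$-rational if and only if $M$ is $\iota\Gamma$-distinguished torsion. Combining this with the identification from the first step, $M$ is rational if and only if $M$ is distinguished-torsion, which is the assertion of the corollary. There is essentially no obstacle here: the corollary is a pure specialization of the already-proven theorem, and the entire ``work'' is the bookkeeping observation that $\sigma[{}_{\Gamma^*}\Gamma]$ is the same thing as the class of $\iota\Gamma$-rational modules. If I wanted to be maximally careful, the only place to slow down is confirming that Theorem \ref{review thm}(1) is indeed stating this equality of full subcategories (rather than merely an inclusion), which it does; so the proof is genuinely a two-line deduction.
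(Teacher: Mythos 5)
Your proposal is correct and takes essentially the same route the paper intends: Corollary \ref{rational iff dist-torsion} is presented in the paper with no proof precisely because it is the specialization $R=\Gamma^*$, $\Theta=\iota\Gamma$ of Theorem \ref{theta-rationality and torsion}, combined with the bookkeeping you carry out. One small nuance: as literally phrased, Theorem \ref{review thm}(1) asserts only that the essential image of $\iota$ lies \emph{in} $\sigma[{}_{\Gamma^*}\Gamma]$, not that it equals it; but the paper itself invokes part (1) as giving the equality (see the proof of Proposition \ref{rationals closed under quots and subs}), which holds under the standing projectivity hypothesis, so you are using the identification exactly as the paper does.
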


\begin{remark}\label{remark on h0 and H0}
Corollary \ref{rational iff dist-torsion} establishes the fundamental importance of $H^*_{\dist}$: the category of graded $\Gamma$-comodules, sitting inside the category of graded $\Gamma^*$-modules, consists precisely of those $\Gamma^*$-modules $M$ such that $H^0_{\dist}(M) \rightarrow M$ is an isomorphism. On the other hand, it is $h^*_{\dist}$, not $H^*_{\dist}$, which is defined in terms of a colimit of $\Ext$ groups. It is not clear how to calculate $H^*_{\dist}$ except in cases where it coincides with $h^*_{\dist}$. Consequently it is a matter of some importance to know, for a given coalgebra $\Gamma$, whether $h^*_{\dist}$ coincides with $H^*_{\dist}$. We have $h^*_{\dist} = H^*_{\dist}$ for all co-commutative coalgebras $\Gamma$, by the same argument as given in Remark \ref{remark on ideal sets 2}. But the motivating example for this paper is case where $\Gamma$ is the mod $p$ dual Steenrod algebra, which is not co-commutative for any prime $p$!
 
By a significantly less trivial argument, we prove in Theorem \ref{main thm on mitchell coalgebras} that a certain family of graded coalgebras, the finite-type {\em Mitchell coalgebras}, also have the property that $h^*_{\dist} = H^*_{\dist}$. The dual Steenrod algebras are finite-type Mitchell coalgebras, so we get $h^*_{\dist} = H^*_{\dist}$ in the case of the dual Steenrod algebras.
\end{remark}

\subsection{Review of (pre)torsion theories, (pre)torsion classes, and stability.}

For any graded ring $R$ and any graded left $R$-module $\Theta$, it is easy to use standard ideas to see that the functor $h^0_{\overline{\dist \Theta}}: \gr\Mod(R) \rightarrow\gr\Ab$ is left exact: since $\overline{\dist \Theta}$ is filtered, $h^0_{\overline{\dist \Theta}} = \underset{I\in \overline{\dist \Theta}}{\colim}\ \underline{\hom}_R(R/RI,-)$ is a composite of left-exact functors, namely $\underline{\hom}_R(R/RI,-)$ and $\underset{I\in\overline{\dist \Theta}}{\colim}$. We hope we do not try the reader's patience by pointing out again that it is much less obvious that $H^0_{\overline{\dist \Theta}}$ is left exact: recall that $H^0_{\overline{\dist \Theta}}(M)$ is the $R$-submodule of $M$ generated by the subgroup $h^0_{\overline{\dist \Theta}}(M)$ of $M$. This is not a very commonplace way to construct a functor, and so it is not immediately obvious what kinds of ideas might allow us to see that $H^0_{\overline{\dist \Theta}}$ is left exact. 

It turns out that the relevant ideas are those from {\em torsion theory}: it is a standard result (given below in Theorem \ref{stenstrom thm}) that the preradical associated to a hereditary pretorsion class is left exact, and in Proposition \ref{rational modules are hereditary pretorsion} we prove that the $\Theta$-rational modules are a hereditary pretorsion class, whose associated preradical---namely, $H^0_{\overline{\dist \Theta}}$---is consequently left exact. 

In this subsection, we give a ``crash course'' in the basic ideas from torsion theory that are used in our proof that $H^0_{\overline{\dist \Theta}}$ is left exact. Introductory accounts of torsion theories include sections 1.12 and 1.13 of \cite{MR1313497}, the entirety of \cite{MR880019}, \cite{MR3565424}, chapter VI of \cite{MR0389953}, and the original paper that introduced torsion theories, \cite{MR191935}. The second of those references restricts attention to the abelian category of left $R$-modules for a ring $R$, while the third deals more generally with Grothendieck categories, including graded $R$-modules, which is the desired level of generality for the applications in this paper. The fifth reference, \cite{MR191935}, assumes that the ambient abelian category is well-generated (i.e., each object has a set, rather than a proper class, of subobjects), which is also satisfied in all applications in this paper. All five are excellent references. 

Here are the relevant basics. 
\begin{definition}
Let $\mathcal{C}$ be a complete, co-complete abelian category. 
\begin{itemize}
\item
A {\em preradical on $\mathcal{C}$} is a functor $r: \mathcal{C} \rightarrow \mathcal{C}$ equipped with a natural transformation $\eta: r \rightarrow \id_{\mathcal{C}}$ such that $\eta X: rX \rightarrow X$ is a monomorphism for all objects $X$ of $\mathcal{C}$.
\item
A preradical $r$ is called a {\em radical} if $r(X/rX)$ vanishes for all objects $X$ of $\mathcal{C}$.
\item
A {\em pretorsion class in $\mathcal{C}$} is a class of objects of $\mathcal{C}$ which is closed under coproducts and quotients.
\end{itemize}
\end{definition}

\begin{definition-proposition}\label{def-prop on torsion theories}
We continue to assume that the abelian category $\mathcal{C}$ is complete and co-complete.
\begin{itemize}
\item
Suppose $\mathcal{C}$ is well-powered\footnote{A category is {\em well-powered} if, for each object $X$, the class of subobjects of $X$ (i.e., equivalence classes of monomorphisms to $X$) forms a set.}. A {\em torsion theory on $\mathcal{C}$} is a pair $(\mathcal{T},\mathcal{F})$ of full replete\footnote{A subcategory is said to be {\em replete} if it is contains every object isomorphic to one of its own objects.} subcategories of $\mathcal{C}$ such that:
\begin{enumerate}
\item if $X\in\ob\mathcal{T}$ and $Z\in\ob\mathcal{F}$, then $\hom_{\mathcal{C}}(X,Z) = 0$, and
\item if $Y\in \ob\mathcal{C}$, then there exists a short exact sequence
\[ 0 \rightarrow X \rightarrow Y \rightarrow Z \rightarrow 0\]
in $\mathcal{C}$ such that $X\in\ob\mathcal{T}$ and $Z\in\ob\mathcal{F}$.
\end{enumerate}
\item A class $\mathcal{T}$ of objects of $\mathcal{C}$ is called a {\em torsion class} if there exists a class $\mathcal{F}$ of objects of $\mathcal{C}$ such that $(\mathcal{T},\mathcal{F})$ is a torsion theory on $\mathcal{C}$. Equivalently (see Theorem 2.3 of \cite{MR191935}), a torsion class in $\mathcal{C}$ is a class of objects of $\mathcal{C}$ closed under images, coproducts, and extensions.
\item A torsion theory $(\mathcal{T},\mathcal{F})$ is called {\em hereditary} if every subobject of every object in $\mathcal{T}$ is also in $\mathcal{T}$. A pretorsion class is called {\em hereditary} if it is closed under subobjects. Consequently, a torsion class is hereditary if and only if it is the torsion class of a hereditary torsion theory.
\end{itemize}
\end{definition-proposition}

The following useful result combines Propositions 1.4, 1.7, 2.3, and 3.1 and Corollary 1.8 in chapter VI of \cite{MR0389953}:
\begin{theorem}\label{stenstrom thm}
Let $\mathcal{C}$ be a complete, co-complete abelian category. Given a preradical $r$ on $\mathcal{C}$, let $\mathcal{T}_r$ denote the collection of all objects $X$ of $\mathcal{C}$ such that $\eta X$ is an isomorphism. 
\begin{itemize}
\item If $r$ is idempotent, then $r$ and $\mathcal{T}_r$ determine one another, and consequently there is a one-to-one correspondence between idempotent preradicals on $\mathcal{C}$ and pretorsion classes in $\mathcal{C}$.
\item 
The following are equivalent:
\begin{itemize}
\item $r$ is left exact.
\item $r$ is idempotent and $\mathcal{T}_r$ is hereditary.
\end{itemize}
Consequently there is a one-to-one correspondence between left exact idempotent preradicals\footnote{Equivalently, left exact idempotent preradicals, since the left exact preradicals are all idempotent.} on $\mathcal{C}$ and hereditary pretorsion classes in $\mathcal{C}$.
\item Furthermore, $r$ is an idempotent radical if and only if $\mathcal{T}_r$ is a torsion class. Consequently we get a one-to-one correspondence between idempotent radicals on $\mathcal{C}$ and torsion theories on $\mathcal{C}$. 
\item 
Combining the above results, $r$ is a left exact radical if and only if $\mathcal{T}_r$ is a hereditary torsion class. Consequently we get a one-to-one correspondence between left exact radicals\footnote{Equivalently, left exact idempotent radicals.} on $\mathcal{C}$ and hereditary torsion theories on $\mathcal{C}$.
\end{itemize}
\end{theorem}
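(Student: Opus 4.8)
The plan is to make both directions of each correspondence explicit and then verify, one implication at a time, which hypothesis on the preradical matches which closure property of the associated class of objects. Given a preradical $r$ with unit $\eta\colon r\to\id_{\mathcal{C}}$, the class $\mathcal{T}_r$ is as in the statement. In the reverse direction, given a pretorsion class $\mathcal{T}$, I would define $r_{\mathcal{T}}(X)$ to be the trace of $\mathcal{T}$ in $X$: the image of the canonical map $\coprod_{T} T\to X$, where $T$ runs over a set of representatives of the subobjects of $X$ lying in $\mathcal{T}$ (here I use cocompleteness and well-poweredness of $\mathcal{C}$, as in the cited references). Because $\mathcal{T}$ is closed under coproducts and quotients, this image again lies in $\mathcal{T}$, and it is visibly the largest subobject of $X$ in $\mathcal{T}$; functoriality of $r_{\mathcal{T}}$ holds because a morphism $f\colon X\to Y$ sends the subobject $r_{\mathcal{T}}(X)\in\mathcal{T}$ onto a quotient of itself inside $Y$, which therefore lies in $\mathcal{T}$ and hence inside $r_{\mathcal{T}}(Y)$. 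One then checks routinely that $r_{\mathcal{T}}$ is idempotent, that $\mathcal{T}_{r_{\mathcal{T}}}=\mathcal{T}$, and --- when $r$ is idempotent to begin with --- that $r_{\mathcal{T}_r}=r$, the last equality coming from naturality of $\eta$ along the inclusions into $X$ of its subobjects lying in $\mathcal{T}_r$. The same naturality, together with the observation that a monomorphism $\eta X$ which is additionally an epimorphism is an isomorphism, shows that $\mathcal{T}_r$ is closed under quotients and coproducts, i.e. is a pretorsion class. This gives the first bullet.

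For the second bullet I would use the standard reformulation of left exactness for a subfunctor of the identity: $r$ is left exact precisely when, for every monomorphism $A\hookrightarrow B$, the commuting square with vertices $r(A),A,r(B),B$ is a pullback. If $r$ is left exact, taking $A=r(X)\hookrightarrow B=X$ gives $r(r(X))=r(X)$, so $r$ is idempotent (in particular every left exact preradical is idempotent, which is the content of the footnote), and taking $B=X\in\mathcal{T}_r$ with an arbitrary subobject $A$ gives $r(A)=A$, so $\mathcal{T}_r$ is hereditary. Conversely, if $r$ is idempotent and $\mathcal{T}_r$ is hereditary, then for a monomorphism $A\hookrightarrow B$ I would form the pullback $P=A\times_B r(B)$; it is a subobject of $r(B)\in\mathcal{T}_r$, hence lies in $\mathcal{T}_r$ by heredity, hence $P\subseteq r(A)$ since $r=r_{\mathcal{T}_r}$ computes the largest such subobject; and conversely $r(A)\in\mathcal{T}_r$, viewed as a subobject of $B$ through $A$, lies inside $r(B)$, so it factors through $P$. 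Thus $r(A)=P$ and $r$ is left exact. The asserted bijection with hereditary pretorsion classes is then immediate from the first bullet.

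For the third bullet I would show, for idempotent $r$, that $r$ is a radical if and only if $\mathcal{T}_r$ is closed under extensions, and then invoke the cited description (Theorem 2.3 of \cite{MR191935}) of torsion classes as the pretorsion classes closed under extensions. Given a short exact sequence $0\to X'\to X\to X''\to 0$ with $X',X''\in\mathcal{T}_r$, naturality of $\eta$ forces $X'\subseteq r(X)$, so $X/r(X)$ is a quotient of $X''$ and hence lies in $\mathcal{T}_r$; if $r$ is a radical then $r(X/r(X))=0$, forcing $X/r(X)=0$, i.e. $X=r(X)\in\mathcal{T}_r$. Conversely, suppose $\mathcal{T}_r$ is closed under extensions and let $X$ be arbitrary. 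Writing the subobject $r(X/r(X))$ of $X/r(X)$ as $Z/r(X)$ with $r(X)\subseteq Z\subseteq X$, the extension $0\to r(X)\to Z\to Z/r(X)\to 0$ has both outer terms in $\mathcal{T}_r$ (the left one by idempotence of $r$, the right one since it equals $r(X/r(X))$), so $Z\in\mathcal{T}_r$, whence $Z\subseteq r(X)$ by naturality and therefore $r(X/r(X))=Z/r(X)=0$. Thus $r$ is a radical. The fourth bullet is then the conjunction of the second and third, once one notes that a hereditary torsion class is exactly a hereditary pretorsion class closed under extensions.

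I expect the only step requiring genuine care, rather than bookkeeping, to be the second bullet: translating ``left exact'' into the pullback-square condition for a subfunctor of $\id_{\mathcal{C}}$, and recognizing $r(A)$ via the identification $r=r_{\mathcal{T}_r}$ as the largest subobject of $A$ in $\mathcal{T}_r$, are where the argument has content. Everything else reduces to repeated use of the single fact that a monomorphism $\eta X\colon rX\to X$ which is additionally an epimorphism is an isomorphism, so that $X\in\mathcal{T}_r$.
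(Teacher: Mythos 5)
Your proof is correct, but note that the paper contains no proof of this statement to compare against: Theorem \ref{stenstrom thm} is stated as a summary of Propositions 1.4, 1.7, 2.3, and 3.1 and Corollary 1.8 of chapter VI of Stenstr\"om's book \cite{MR0389953}, and is simply cited. What you have written is essentially a self-contained reconstruction of the standard arguments from that source: building $r_{\mathcal{T}}$ as the trace (largest subobject of $X$ lying in $\mathcal{T}$), characterizing left exactness of a preradical by the pullback condition $r(A)=A\cap r(B)$ for monomorphisms $A\hookrightarrow B$, and identifying, for idempotent $r$, the radical property with closure of $\mathcal{T}_r$ under extensions; your verifications of these steps (naturality of $\eta$, the fact that a monic-and-epic $\eta X$ is an isomorphism, and the identity $r=r_{\mathcal{T}_r}$ for idempotent $r$) are all sound. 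One point of interpretation worth recording: in the third bullet you prove the equivalence only under the standing assumption that $r$ is idempotent, which is the correct reading of the statement. For a completely arbitrary preradical the literal ``if'' direction fails --- for instance $r(M)=pM$ on abelian groups is a radical that is not idempotent, yet $\mathcal{T}_r$, the class of $p$-divisible groups, is a torsion class --- so the content is the bijective correspondence between idempotent radicals and torsion classes, which is exactly what you establish and what Stenstr\"om proves.
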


\subsection{The hereditary pretorsion class associated to a module $\Theta$.}

\begin{definition}
Let $R$ be a graded ring, and let $S$ be a connected ideal set in $R$. 
\begin{itemize}
\item A graded left $R$-module $M$ is {\em $S$-torsion} if the natural map $h^0_{S}(M)\hookrightarrow M$ is an isomorphism. That is, $M$ is $S$-torsion if and only if every homogeneous element of $M$ is $I$-torsion for some member $I$ of $S$.
\item A graded left $R$-module $M$ is {\em $S$-rational} if the natural map $H^0_{S}(M)\hookrightarrow M$ is an isomorphism. That is, $M$ is $S$-rational if and only if every homogeneous element of $M$ is a homogeneous $R$-linear combination of homogeneous elements of $M$, each of which is $I$-torsion for some $I\in S$.
\end{itemize}
\end{definition}
Of course every $S$-torsion module is $S$-rational, and the converse is true when $R$ is commutative. When $R$ is non-commutative, there can exist $S$-rational modules which fail to be $S$-torsion: an example is given in Examples \ref{torsion class examples 2}.

\begin{prop}\label{rational modules are hereditary pretorsion}
Let $R$ be a graded ring, and let $S$ be a connected ideal set in $R$. Then the following claims are true:
\begin{enumerate}
\item The $S$-torsion modules in $\gr\Mod(R)$ are a hereditary pretorsion class. If $h^1_S(M)$ vanishes on all $S$-torsion graded $R$-modules $M$, then the $S$-torsion modules in $\gr\Mod(R)$ are a hereditary torsion class.
\item The $S$-rational modules in $\gr\Mod(R)$ are a pretorsion class. 
\item If we furthermore have that $S = \overline{\dist \Theta}$ for some graded left $R$-module $\Theta$, then the $S$-rational modules in $\gr\Mod(R)$ are a hereditary pretorsion class. 
\end{enumerate}
\end{prop}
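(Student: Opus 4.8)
The plan is to prove the three parts in order, leaning on Proposition \ref{rationals closed under quots and subs} (and its generalization, Remark \ref{rationals closed under quots and subs 2}), Theorem \ref{theta-rationality and torsion}, and the basic closure properties of $h^0_S$ and $H^0_S$. The overall strategy is: (1) is a direct verification that an $S$-torsion module's submodules and quotients are $S$-torsion, plus the observation that extensions close up the class exactly when $h^1_S$ vanishes on it; (2) is the analogous verification for $S$-rational modules, using that $H^0_S$ is an $R$-submodule rather than just a subgroup; and (3) upgrades (2) by identifying $\overline{\dist \Theta}$-rationality with $\Theta$-rationality via Theorem \ref{theta-rationality and torsion}, whereupon Proposition \ref{rationals closed under quots and subs}/Remark \ref{rationals closed under quots and subs 2} already gives closure under submodules and coproducts and quotients.

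For part (1): a graded submodule $N\subseteq M$ of an $S$-torsion module $M$ is $S$-torsion because every homogeneous element of $N$ is a homogeneous element of $M$, hence is $I$-torsion for some $I\in S$; this shows the class is hereditary. For a quotient $M\twoheadrightarrow Q$, a homogeneous element of $Q$ lifts to a homogeneous element of $M$, which is $I$-torsion for some $I\in S$, and the image of an $I$-torsion element is $I$-torsion, so $Q$ is $S$-torsion. Closure under coproducts is immediate since a homogeneous element of $\coprod_\alpha M_\alpha$ has only finitely many nonzero components, and (using connectedness of $S$, exactly as in Remark \ref{remark on ideal sets}) these finitely many torsion conditions can be combined into a single one over an element of $\overline{S}$; here I should be a little careful and either work with $\overline{S}$ throughout or note that connectedness of $S$ suffices to keep $h^0_S$ a subgroup. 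Thus the $S$-torsion modules are a hereditary pretorsion class. For the torsion-class statement: by Theorem \ref{stenstrom thm} and Definition-Proposition \ref{def-prop on torsion theories}, a (hereditary) pretorsion class is a (hereditary) torsion class iff it is closed under extensions; given $0\to M'\to M\to M''\to 0$ with $M',M''$ both $S$-torsion, the long exact sequence for $h^*_S$ reads $h^0_S(M')\to h^0_S(M)\to h^0_S(M'')\to h^1_S(M')$, and if $h^1_S$ vanishes on $S$-torsion modules (in particular on $M'$) then $h^0_S(M)\to M$ is onto (and it is always monic), so $M$ is $S$-torsion.

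For part (2): closure of the $S$-rational modules under quotients and coproducts. A homogeneous element of a quotient $Q$ of an $S$-rational $M$ lifts to a homogeneous element $m\in M$; write $m=\sum r_j m_j$ with each $m_j$ homogeneous and $I_j$-torsion for some $I_j\in S$; the image in $Q$ is then a homogeneous $R$-linear combination of the images of the $m_j$, each still $I_j$-torsion, so $Q$ is $S$-rational. For a coproduct, a homogeneous element has finitely many nonzero components $m_\alpha$, each a homogeneous $R$-linear combination of torsion elements, and concatenating these expressions exhibits the element as such a combination; so the coproduct is $S$-rational. Hence the $S$-rational modules form a pretorsion class. (I do not claim heredity here in general — indeed Examples \ref{torsion class examples 2} shows it can fail.)

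For part (3): when $S=\overline{\dist \Theta}$, Theorem \ref{theta-rationality and torsion} says $M$ is $S$-rational (i.e.\ $\overline{\dist\Theta}$-distinguished torsion) if and only if $M$ is $\Theta$-rational in the sense of the Definition preceding Remark \ref{rationals closed under quots and subs 2}. By Proposition \ref{rationals closed under quots and subs} together with Remark \ref{rationals closed under quots and subs 2}, the class of $\Theta$-rational modules is closed under coproducts, submodules, and quotients (the hypothesis there, that $\Gamma^*$ is projective over $A$, is not needed for a general graded ring $R$ and general $\Theta$ — the proof of Proposition \ref{rationals closed under quots and subs} only uses the definition of the class via $\sigma[{}_{R}\Theta]$ and stability of monos/epis under pushout in a module category). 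Therefore the $S$-rational modules are closed under quotients, coproducts, and subobjects, i.e.\ they form a hereditary pretorsion class, which is the assertion.

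I expect the main obstacle to be bookkeeping rather than any genuine difficulty: specifically, making sure that ``$S$-torsion'' and ``$S$-rational'' are phrased so that the relevant colimits over $S$ land inside $M$ (this is where connectedness of $S$, and in the coproduct argument passing to $\overline{S}$, enters), and making sure the cited Proposition \ref{rationals closed under quots and subs} is invoked in the generality of an arbitrary graded ring and module $\Theta$ — which is exactly what Remark \ref{rationals closed under quots and subs 2} licenses. The only place a real hypothesis is used is the $h^1_S$-vanishing in the torsion-class half of part (1), which is simply assumed.
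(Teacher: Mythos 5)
Most of what you do tracks the paper's own proof: your submodule, quotient, and extension arguments in part (1), your part (2), and your part (3) are essentially the arguments given there (in (3) the paper gets heredity directly from condition (2) of Theorem \ref{theta-rationality and torsion} --- annihilators of homogeneous elements of a submodule are annihilators of elements of the ambient module --- rather than rerunning Proposition \ref{rationals closed under quots and subs}, but your route via Remark \ref{rationals closed under quots and subs 2} is equally legitimate and that remark does license the $\Theta$-rational generality you need).

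The genuine gap is the coproduct step in part (1). You take a homogeneous element of $\coprod_j M_j$ whose finitely many nonzero components are $I_i$-torsion for various $I_i\in S$ and combine these conditions by intersecting the ideals --- but the intersection lies in $\overline{S}$, not in $S$, so you have only shown the coproduct is $\overline{S}$-torsion, and for a merely connected $S$ the functors $h^0_S$ and $h^0_{\overline{S}}$ genuinely differ (the paper stresses exactly this in Remark \ref{remark on ideal sets}). Neither of your hedges repairs this: passing to $\overline{S}$ changes the statement, and the connectedness remark concerns injectivity of $h^0_S(M)\to M$, not the surjectivity you need. Worse, the ``single ideal'' element-wise reading you are using is strictly stronger than the official definition ($h^0_S(M)\to M$ an isomorphism) when $S$ is not filtered, and under that strong reading coproduct closure is actually false: take $R=k[x,y,z]$, $S=\{(x),(y),(x,y,z)\}$ (connected, not filtered), $M_1=R/R(x)$, $M_2=R/R(y)$; every homogeneous element of $M_1$ is $(x)$-torsion and of $M_2$ is $(y)$-torsion, yet the degree-zero element $(1,1)\in M_1\oplus M_2$ is annihilated by no single member of $S$. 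So the step cannot be patched inside the element-wise framework; one must argue, as the paper does, with the colimit definition: the canonical map $\coprod_j h^0_S(M_j)\to h^0_S\bigl(\coprod_j M_j\bigr)$ followed by $\eta_{\coprod_j M_j}$ is $\coprod_j \eta_{M_j}$, an isomorphism, so $\eta_{\coprod_j M_j}$ is epic, and it is monic since $S$ is connected, whence the coproduct is $S$-torsion in the official sense. Put differently, surjectivity of $\eta$ only requires each homogeneous element to be a \emph{sum} of elements, each torsion for a single member of $S$ --- which the coproduct obviously satisfies --- and no single dominating ideal is needed.
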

\begin{proof}
First, from Definition \ref{def of distinguished}, we know that $H^0_{S}$ is a preradical\footnote{Note that, since $h^0_{S}$ does not take values in $\gr\Mod(R)$ but only in $\gr\Ab$, $h^0_{S}$ is not a preradical.} on $\gr\Mod(R)$.
\begin{enumerate}
\item  
By Theorem \ref{stenstrom thm}, to know that the $\Theta$-torsion modules form a hereditary pretorsion class, we need to show that the $S$-torsion modules are closed under coproducts, quotients, and submodules.
We begin with coproducts. Given a graded left $R$-module $M$, let $\eta_M: h^0_S(M)\rightarrow M$ denote the natural monomorphism.
Then, given a set $\{ M_j: j\in J\}$ of $S$-torsion modules in $\gr\Mod(R)$, we have natural maps fitting into a commutative diagram
\begin{equation}\label{diag 0340405} \xymatrix{
 \underset{I\in S}{\colim}\ \underline{\hom}_R\left(R/RI,\underset{j}{\coprod} M_j \right) \ar[r]^(.6){\eta_{\underset{j}{\coprod} M_j}} 
  & \underset{j}{\coprod} M_j \\
 \underset{i\in S}{\colim}\ \underset{j}{\coprod} \underline{\hom}_R(R/RI,M_j) \ar[u] 
  & \underset{j}{\coprod} \underset{I\in S}{\colim}\ \underline{\hom}_R(R/RI,M_j) \ar[l]^{\cong} \ar[u]^{\cong}_{\underset{j}{\coprod} \eta_{M_j}} .
}\end{equation}
Since $\underset{j}{\coprod} \eta_{M_j}$ is an isomorphism and hence epic, the last map in the composite depicted in diagram \ref{diag 0340405} must also be epic. That map is $\eta_{\underset{j}{\coprod} M_j}$, which is also monic since $S$ is connected. Hence $\eta_{\underset{j}{\coprod} M_j}$ is an isomorphism, i.e., $\underset{j}{\coprod}M_j$ is $S$-torsion.

We also need to show that the $S$-torsion modules are closed under quotients. This is a consequence of an easy diagram chase along the lines of the argument just given for closure under coproducts. Consequently the $S$-torsion modules form a pretorsion class. A similar diagram chase suffices to prove that, if $h^1_S(M)$ vanishes for all $S$-torsion $M$, then the $S$-torsion modules are closed under extensions in $\gr\Mod(R)$, and consequently form a torsion class.

Finally, we also need to show that the $S$-torsion modules are closed under submodules. This follows easily from the understanding of the $S$-torsion modules as those graded $R$-modules in which every homogeneous element is $I$-torsion for some $I\in S$.
\item 
We need to show that the $S$-rational modules are closed under coproducts. Given a set $\{ M_j: j\in J\}$ of $S$-rational modules in $\gr\Mod(R)$, any given homogeneous element $m$ of the coproduct $\coprod_{j\in J}M_j$ is a homogeneous $R$-linear combination of elements which are each concentrated in a single summand $M_j$ of $\coprod_{j\in J}M_j$. Each of those elements, in turn, is a homogeneous $R$-linear combination of homogeneous elements which are $I$-torsion for various ideals $I$ of $S$. Consequently $m$ is a homogeneous $R$-linear combination of homogeneous elements which are $I$-torsion for various ideals $I$ of $S$. Consequently $\coprod_{j\in J} M_j$ is $S$-rational.

Showing that the $S$-rational graded $R$-modules are closed under quotients is a simple matter of a diagram chase. Consequently the $S$-rational modules form a pretorsion class in graded $R$-modules. 
\item Given a graded left $R$-module $\Theta$, it is a consequence of Theorem \ref{theta-rationality and torsion} that the $\overline{\dist \Theta}$-rationality---that is, $\Theta$-rationality---of a graded $R$-module $M$ is inherited by all graded submodules of $M$.
\end{enumerate}
\end{proof}
\begin{corollary}\label{H0 is left exact}
Let $R$ be a graded ring, and let $\Theta$ be a graded $R$-module. Then $H^0_{\overline{\dist \Theta}}: \gr\Mod(R) \rightarrow \gr\Mod(R)$ is left exact.
\end{corollary}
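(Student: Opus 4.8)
The plan is to deduce left exactness of $H^0_{\overline{\dist \Theta}}$ from the general correspondence in Theorem~\ref{stenstrom thm}, according to which a preradical is left exact exactly when it is idempotent and the pretorsion class it determines is hereditary. (Theorem~\ref{stenstrom thm} applies, since $\gr\Mod(R)$ is a Grothendieck category, in particular complete, co-complete and well-powered.) Two of the ingredients are already available: $H^0_{\overline{\dist \Theta}}$ is a preradical on $\gr\Mod(R)$ by Definition~\ref{def of distinguished}, and the $\Theta$-rational graded $R$-modules form a hereditary pretorsion class by Proposition~\ref{rational modules are hereditary pretorsion}(3). So it remains to check that (a) the class $\mathcal{T}_{H^0_{\overline{\dist \Theta}}}$ attached to this preradical by Theorem~\ref{stenstrom thm} --- the objects on which the canonical monomorphism is an isomorphism --- is precisely the class of $\Theta$-rational modules, and that (b) $H^0_{\overline{\dist \Theta}}$ is idempotent. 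Given (a) and (b), Theorem~\ref{stenstrom thm} immediately yields the conclusion.

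Step (a) is just unwinding definitions: by construction of $\mathcal{T}_r$, a graded $R$-module $M$ lies in $\mathcal{T}_{H^0_{\overline{\dist \Theta}}}$ iff $H^0_{\overline{\dist \Theta}}(M)\hookrightarrow M$ is an isomorphism, which by Definition~\ref{def of distinguished} says $M$ is $\Theta$-distinguished torsion, which by Theorem~\ref{theta-rationality and torsion} is equivalent to $M$ being $\Theta$-rational. Step (b) is where the only real work lies, and it reduces to showing that $H^0_{\overline{\dist \Theta}}(M)$ is itself $\Theta$-rational for every graded $R$-module $M$: for then, $H^0_{\overline{\dist \Theta}}(M)$ being $\Theta$-distinguished torsion (by Theorem~\ref{theta-rationality and torsion} again), the inclusion $H^0_{\overline{\dist \Theta}}\bigl(H^0_{\overline{\dist \Theta}}(M)\bigr)\hookrightarrow H^0_{\overline{\dist \Theta}}(M)$ is an isomorphism, i.e.\ $H^0_{\overline{\dist \Theta}}$ is idempotent. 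To see that $H^0_{\overline{\dist \Theta}}(M)$ is $\Theta$-rational, I would argue exactly as in the proof of Theorem~\ref{theta-rationality and torsion}: by definition $H^0_{\overline{\dist \Theta}}(M)$ is the image of the map $\coprod_m \Sigma^{|m|}R/R\ann_{\ell}(m)\to M$ whose coproduct runs over the homogeneous elements $m$ of $M$ that are $I$-torsion for some $I\in\overline{\dist \Theta}$; each such $m$ has $\ann_{\ell}(m)\supseteq I$, hence $\ann_{\ell}(m)$ is a $\Theta$-distinguished left ideal, so $R/R\ann_{\ell}(m)$ is $\Theta$-rational (it is a quotient of $R/RI$, and $R/RI$ embeds into a finite coproduct of the $\Theta$-rational modules $R/RJ$ with $J$ strongly $\Theta$-distinguished, just as in the proof of Theorem~\ref{theta-rationality and torsion}); and then closure of $\Theta$-rational modules under coproducts and quotients (Proposition~\ref{rationals closed under quots and subs} together with Remark~\ref{rationals closed under quots and subs 2}) gives that the coproduct, and hence its quotient $H^0_{\overline{\dist \Theta}}(M)$, is $\Theta$-rational.

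The main obstacle is thus nothing more than establishing idempotence of $H^0_{\overline{\dist \Theta}}$; the noncommutativity of $R$ is precisely the reason this is not automatic, and it is worth noting that $R/R\ann_{\ell}(m)$ is $\Theta$-rational not because it is literally a submodule of $\Theta$ (it need not be, since $\ann_{\ell}(m)$ is only a left annihilator) but because the $\Theta$-rational modules are closed under the operations appearing in Proposition~\ref{rationals closed under quots and subs}. Once idempotence is in hand, left exactness is a one-line invocation of Theorem~\ref{stenstrom thm}.
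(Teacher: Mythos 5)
Your proof is correct and follows the same route as the paper, which also invokes Theorem~\ref{stenstrom thm} together with the third part of Proposition~\ref{rational modules are hereditary pretorsion}. The only substantive addition is that you spell out the idempotence of $H^0_{\overline{\dist \Theta}}$ (by showing $H^0_{\overline{\dist \Theta}}(M)$ is $\Theta$-rational, using the closure properties of $\Theta$-rational modules exactly as in the proof of Theorem~\ref{theta-rationality and torsion}), a step the paper's one-line proof treats as implicit in the cited results.
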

\begin{proof}
Immediate consequence of Theorem \ref{stenstrom thm}
and the third part of Proposition \ref{rational modules are hereditary pretorsion}.
\end{proof}

\begin{corollary}\label{H0 rat and iota}
Let $A$ be a commutative ring, let $\Gamma$ be an $A$-coalgebra projective over $A$, and let $R = \Gamma^*$ be the $A$-linear dual algebra of $\Gamma$. Then $H^0_{\dist}: \gr\Mod(R)\rightarrow \gr\Mod(R)$ is naturally isomorphic to the composite $\iota\circ\tr$ of the rational functor $\tr: \gr\Mod(\Gamma^*)\rightarrow \gr\Comod(\Gamma)$ with the inclusion $\iota: \gr\Comod(\Gamma)\rightarrow\gr\Mod(\Gamma^*)$ of the comodules into the modules.
\end{corollary}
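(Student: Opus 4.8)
The plan is to show that, for every graded $\Gamma^*$-module $M$, both $H^0_{\dist}(M)$ and $\iota\tr(M)$ pick out the same graded $\Gamma^*$-submodule of $M$ --- the largest rational submodule of $M$ --- and then to observe that a subfunctor of the identity functor on $\gr\Mod(\Gamma^*)$ is completely determined by its values on objects, so the natural isomorphism comes for free once the objectwise identification is in place.

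First I would record that $H^0_{\dist}(M)$ is itself rational. By Proposition \ref{rational modules are hereditary pretorsion} the rational graded $\Gamma^*$-modules form a hereditary pretorsion class, and $H^0_{\dist}$ is its associated preradical (this is also the content of Corollary \ref{H0 is left exact}); hence, under Theorem \ref{stenstrom thm}, $H^0_{\dist}$ is a left exact idempotent preradical. Idempotency says the structural monomorphism $H^0_{\dist}\!\left(H^0_{\dist}(M)\right)\hookrightarrow H^0_{\dist}(M)$ is an isomorphism, i.e.\ $H^0_{\dist}(M)$ is distinguished-torsion, hence rational by Corollary \ref{rational iff dist-torsion}. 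Next I would verify two facts about rational submodules of $M$: (i) every rational submodule $N\subseteq M$ satisfies $N\subseteq H^0_{\dist}(M)$, and (ii) the image of any $\Gamma^*$-linear map $f\colon N\to M$ out of a rational module $N$ lies in $H^0_{\dist}(M)$. For (i): $N$ is distinguished-torsion by Corollary \ref{rational iff dist-torsion}, so $N=H^0_{\dist}(N)$, and applying the subfunctor $H^0_{\dist}$ to the inclusion $N\hookrightarrow M$ exhibits $N=H^0_{\dist}(N)$ as a submodule of $H^0_{\dist}(M)$. For (ii): $\operatorname{im} f$ is a quotient of $N$, hence rational by Proposition \ref{rational modules are hereditary pretorsion}, and now (i) applies.

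Combining these with the explicit description of $\tr$ in part (3) of Theorem \ref{review thm} --- where $\tr(M)$ is the $\Gamma^*$-submodule of $M$ generated by the homogeneous elements lying in the image of some $\Gamma^*$-linear map from a rational module --- I would conclude $\tr(M)=H^0_{\dist}(M)$ as submodules of $M$: the containment $\tr(M)\subseteq H^0_{\dist}(M)$ is (ii), and the reverse holds because $H^0_{\dist}(M)$ is itself a rational submodule of $M$ (first step), so it equals the image of its own inclusion into $M$ and is therefore contained in $\tr(M)$. Since $\iota$ merely forgets the comodule structure, $\iota\tr(M)$ is this same submodule of $M$, with its natural map to $M$ --- the counit of the adjunction $\iota\dashv\tr$ --- being the inclusion, which is exactly the structural monomorphism $H^0_{\dist}(M)\hookrightarrow M$. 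Thus $H^0_{\dist}$ and $\iota\tr$ are subfunctors of $\id_{\gr\Mod(\Gamma^*)}$ agreeing on every object, hence coincide as functors. I do not expect a real obstacle: everything is formal once Corollary \ref{H0 is left exact} and the description of $\tr$ are available, and the only point needing a moment's care is identifying the counit $\iota\tr(M)\to M$ with the inclusion of the submodule $H^0_{\dist}(M)$, which is immediate from the way $\tr(M)$ is defined as a submodule of $M$ in Theorem \ref{review thm}.
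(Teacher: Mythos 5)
Your proof is correct and is essentially the argument the paper leaves implicit (the corollary is stated there without a written proof): both $H^0_{\dist}(M)$ and $\iota\tr(M)$ are identified with the largest rational submodule of $M$, using Corollary \ref{H0 is left exact}, Corollary \ref{rational iff dist-torsion}, Proposition \ref{rational modules are hereditary pretorsion}, and the description of $\tr$ in Theorem \ref{review thm}. The only phrasing to tighten is the parenthetical claim that Corollary \ref{H0 is left exact} says $H^0_{\dist}$ is ``the associated preradical'' of the rational pretorsion class --- it literally gives only left exactness --- but that is all you need, since idempotency then follows from Theorem \ref{stenstrom thm} exactly as you use it.
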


Here are some examples to demonstrate the importance of the hypotheses involved in Proposition \ref{rational modules are hereditary pretorsion}.
\begin{example}\label{torsion class examples}
There exist rings $R$ and filtered (hence connected) ideal sets $S$ in $R$ such that the pretorsion class of $S$-torsion modules fails to be a torsion class, and also such that the pretorsion class of $S$-rational modules fails to be a torsion class. Here is a simple example of both: let $R = \mathbb{Z}/4\mathbb{Z}$, and let $S$ be $\{ (2)\}$. Then $h^0_S(M) = H^0_S(M)$ is the $2$-torsion submodule of a $\mathbb{Z}/4\mathbb{Z}$-module $M$. We have the short exact sequence of $\mathbb{Z}/4\mathbb{Z}$-modules
\[ 
 0 
  \rightarrow \mathbb{Z}/2\mathbb{Z} 
  \rightarrow \mathbb{Z}/4\mathbb{Z} 
  \rightarrow \mathbb{Z}/2\mathbb{Z} 
  \rightarrow 0\]
in which the left- and right-hand nonzero modules are $S$-torsion and also $S$-rational, but the middle module, $\mathbb{Z}/4\mathbb{Z}$, is neither $S$-torsion nor $S$-rational.
\end{example}
\begin{example}\label{torsion class examples 2}
When the connected ideal set $S$ is not equivalent to $\overline{\dist \Theta}$ for some graded left $R$-module $\Theta$, it is not necessarily the case that the pretorsion class consisting of the $S$-rational modules is hereditary. Here is an example: let the ring $R$ be the subalgebra $A(1)$ of the $2$-primary Steenrod algebra generated by $\Sq^1$ and $\Sq^2$. Let $S$ be $\{ A(1)\Sq^1\}$, that is, $S$ has just one member, and that member is the left ideal of $A(1)$ generated by $\Sq^1$. 

Let $M$ be the graded left $A(1)$-submodule of $A(1)$ generated by the element $\Sq^1$. For clarity, write $x$ for this generator, so that $M$ is a four-dimensional $\mathbb{F}_2$-vector space with homogeneous $\mathbb{F}_2$-linear basis \[ \{ x,\ \Sq^2x,\ \Sq^1\Sq^2x,\ \Sq^2\Sq^1\Sq^2x\}.\] The action of the ideal $A(1)\Sq^1$ on the generator $x\in M$ is trivial, since $\Sq^1x=0$. However, the action of $A(1)\Sq^1$ on $M$ is not trivial: the element $\Sq^1\in A(1)\Sq^1$ acts nontrivially on $\Sq^2x$. 

The upshot is that $H^0_S(M) = M$, but $h^0_S(M)$ is strictly smaller than $M$. In particular, $h^0_S(M)$ is the $\mathbb{F}_2$-linear span of $x$, $\Sq^1\Sq^2x,$ and $\Sq^2\Sq^1\Sq^2x$. Hence $M$ is $S$-rational but not $S$-torsion. 

Furthermore, let $M^{\prime}$ be the $A(1)$-submodule of $M$ generated by $\Sq^2x$. Since $h^0_S(M^{\prime})$ is $\mathbb{F}_2$-linearly spanned by $\Sq^1\Sq^2x$ and $\Sq^2\Sq^1\Sq^2x$, we have that $h^0_S(M^{\prime}) = H^0_S(M^{\prime}) \neq M^{\prime}$, so $M^{\prime}$ is neither $S$-rational nor $S$-torsion, despite being a submodule of a $S$-rational module. Hence the pretorsion class of $S$-rational $A(1)$-modules is not hereditary.
\end{example}

\section{Distinguished local cohomology of products.}
\label{Distinguished local cohomology...}
\subsection{Derived products of comodules are given by distinguished local cohomology.}



\begin{lemma}\label{bdd above products prelemma}
Let $A$ be a field, and let $\Gamma$ be a finite-type graded $A$-module concentrated in nonpositive degrees. Let $S$ be a set, and let $\{ M_s: s\in S\}$ be a uniformly bounded-above set of graded $A$-modules. Then the natural map of graded $A$-modules \[\Gamma\otimes_A \prod_{s\in S} M_s \rightarrow \prod_{s\in S} \Gamma\otimes_A M_s \] is an isomorphism.
\end{lemma}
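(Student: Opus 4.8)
The plan is to verify the claim one homogeneous degree at a time, using the fact that in each fixed degree only finitely many of the relevant tensor summands can be nonzero. Fix an integer $n$. Since the map in question is a morphism of graded $A$-modules, it suffices to show its degree $n$ component is an isomorphism of $A$-vector spaces. Writing $\Gamma^i$ and $M_s^j$ for homogeneous summands, the degree $n$ part of the source is $\bigoplus_{i+j=n}\Gamma^i\otimes_A\left(\prod_{s\in S}M_s\right)^j = \bigoplus_{i+j=n}\Gamma^i\otimes_A\left(\prod_{s\in S}M_s^j\right)$, while the degree $n$ part of the target is $\prod_{s\in S}\bigoplus_{i+j=n}\Gamma^i\otimes_A M_s^j$.

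First I would observe that this direct sum over pairs $(i,j)$ with $i+j=n$ is really \emph{finite}: since $\Gamma$ is concentrated in nonpositive degrees, the summand with $i+j=n$ vanishes unless $i\leq 0$, i.e. unless $j\geq n$; and by the uniform bound above on the $M_s$ there is an integer $N$ with $M_s^j=0$ for all $s\in S$ and all $j>N$, whence also $\left(\prod_{s\in S}M_s\right)^j=\prod_{s\in S}M_s^j=0$ for $j>N$. So only the finitely many pairs $(i,j)$ with $i+j=n$ and $n\leq j\leq N$ contribute, on both sides. Next I would use that for a finite-dimensional $A$-vector space $V$ the canonical map $V\otimes_A\prod_{s\in S}W_s\to\prod_{s\in S}(V\otimes_A W_s)$ is an isomorphism — immediate after choosing a basis of $V$, since both sides then become $\prod_{s\in S}W_s^{\dim_A V}$. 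Applying this with $V=\Gamma^i$ (finite-dimensional because $\Gamma$ is of finite type) and $W_s=M_s^j$ gives $\Gamma^i\otimes_A\left(\prod_{s\in S}M_s^j\right)\cong\prod_{s\in S}\left(\Gamma^i\otimes_A M_s^j\right)$ for each of the finitely many relevant $(i,j)$. Finally, since a finite direct sum commutes with an arbitrary product, $\bigoplus_{i+j=n}\prod_{s\in S}\left(\Gamma^i\otimes_A M_s^j\right)\cong\prod_{s\in S}\bigoplus_{i+j=n}\left(\Gamma^i\otimes_A M_s^j\right)$, which is exactly the degree $n$ part of the target. Tracing through these canonical identifications shows their composite is the degree $n$ component of the natural map in the statement, so that component is an isomorphism.

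I do not expect a genuine obstacle here; the only point requiring care is bookkeeping with the gradings, namely making sure \emph{both} hypotheses are used — finite type of $\Gamma$ to control each $\Gamma^i$, and the uniform upper bound on the $M_s$ (together with $\Gamma$ being concentrated in nonpositive degrees) to cut the relevant set of pairs $(i,j)$ down to a finite one in each degree. Once that finiteness is in place, the statement reduces to the elementary facts that tensoring with a finite-dimensional vector space and forming finite direct sums each commute with arbitrary products.
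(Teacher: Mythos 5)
Your proof is correct, and it is essentially the ``elementary'' argument the paper has in mind: the paper simply defers to Lemma 2.4 of \cite{salchenoughproj} with the remark that the argument for nonnegative gradings carries over to the nonpositive case. Your degree-by-degree verification spells out exactly that argument, correctly isolating the two places the hypotheses enter (finite type of $\Gamma$ to commute $\Gamma^i\otimes_A(-)$ with products, and the nonpositivity of $\Gamma$ together with the uniform upper bound on the $M_s$ to cut each fixed total degree down to a finite direct sum, which then commutes with products).
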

\begin{proof}
Elementary; see for example Lemma 2.4 in \cite{salchenoughproj}. (There the result is stated and proven for nonnegative gradings, rather than nonpositive gradings, but the nonpositively-graded case is proven in exactly the same manner.)
\end{proof}

\begin{lemma}\label{bdd above products lemma}
Let $A$ be a field, and let $\Gamma$ be a finite-type graded $A$-coalgebra concentrated in nonpositive degrees. Let $S$ be a set, and let $d: S \rightarrow \mathbb{Z}$ be a function whose set of values $\{ d(s): s\in S\}$ is bounded above. Then the graded $\Gamma$-comodule $\prod^{\Gamma}_{s\in S} \Sigma^{d(s)} \Gamma$ is the Cartesian product $\prod_{s\in S}\Sigma^{d(s)}\Gamma$.

Put more precisely: if we write $G$ for the forgetful functor $\gr\Comod(\Gamma)\rightarrow\gr\Mod(A)$, then under the stated hypotheses, the natural map of graded $A$-modules
\[ G\prod^{\Gamma}_{s\in S} \Sigma^{d(s)} \Gamma \rightarrow \prod_{s\in S} \Sigma^{d(s)}G\Gamma\]
is an isomorphism.
\end{lemma}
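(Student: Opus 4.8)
The plan is to exploit the cofree comodule functor. Recall that for a coalgebra $\Gamma$ over a field $A$ the forgetful functor $G\colon \gr\Comod(\Gamma)\to\gr\Mod(A)$ has a right adjoint, the cofree comodule functor $-\otimes_A\Gamma\colon \gr\Mod(A)\to\gr\Comod(\Gamma)$, where $V\otimes_A\Gamma$ carries the comodule structure $V\otimes_A\Delta$. In particular $\Gamma$ itself, with structure map $\Delta$, is the cofree comodule on $A$, so for each $s$ the comodule $\Sigma^{d(s)}\Gamma$ is the cofree comodule on $\Sigma^{d(s)}A$. Since $-\otimes_A\Gamma$ is a right adjoint it preserves all limits, in particular products, so there is a natural isomorphism of graded $\Gamma$-comodules
\[ \prod^{\Gamma}_{s\in S}\Sigma^{d(s)}\Gamma \;=\; \prod^{\Gamma}_{s\in S}\bigl(\Sigma^{d(s)}A\otimes_A\Gamma\bigr) \;\cong\; \Bigl(\prod_{s\in S}\Sigma^{d(s)}A\Bigr)\otimes_A\Gamma. \]

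First I would apply $G$ to this isomorphism, obtaining an isomorphism $G\prod^{\Gamma}_{s\in S}\Sigma^{d(s)}\Gamma \cong \bigl(\prod_{s\in S}\Sigma^{d(s)}A\bigr)\otimes_A\Gamma$ in $\gr\Mod(A)$. Next, each $\Sigma^{d(s)}A$ is concentrated in the single degree determined by $d(s)$, and since $\{d(s):s\in S\}$ is bounded above, the family $\{\Sigma^{d(s)}A:s\in S\}$ is uniformly bounded above; hence Lemma \ref{bdd above products prelemma} applies and gives a natural isomorphism $\bigl(\prod_{s\in S}\Sigma^{d(s)}A\bigr)\otimes_A\Gamma \cong \prod_{s\in S}\bigl(\Sigma^{d(s)}A\otimes_A\Gamma\bigr) = \prod_{s\in S}\Sigma^{d(s)}\Gamma = \prod_{s\in S}\Sigma^{d(s)}G\Gamma$. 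Composing the two isomorphisms yields an isomorphism $G\prod^{\Gamma}_{s\in S}\Sigma^{d(s)}\Gamma \cong \prod_{s\in S}\Sigma^{d(s)}G\Gamma$.

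The one point requiring care, and the step I expect to be the main obstacle, is checking that this composite isomorphism coincides with the canonical comparison map named in the statement, i.e.\ the map induced by the universal property of the Cartesian product $\prod_{s\in S}\Sigma^{d(s)}G\Gamma$ out of $G$ applied to the comodule-product projections. I would verify this by chasing projections: under the adjunction $G\dashv(-\otimes_A\Gamma)$, and the fact that a right adjoint computes products componentwise, the comodule projection $\prod^{\Gamma}_{s}\Sigma^{d(s)}\Gamma \to \Sigma^{d(t)}\Gamma$ corresponds to the $A$-module projection $\prod_{s}\Sigma^{d(s)}A\to\Sigma^{d(t)}A$ tensored with $\Gamma$; and the isomorphism of Lemma \ref{bdd above products prelemma} is itself constructed to be compatible with these projections. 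Thus both the composite above and the canonical comparison map are characterized as the unique map into $\prod_{s\in S}\Sigma^{d(s)}G\Gamma$ whose $t$-th component is $G$ applied to the $t$-th comodule projection, so they agree. Everything else in the argument is formal.
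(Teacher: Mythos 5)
Your proof is correct, but it takes a genuinely different route from the paper's. The paper identifies the underlying $A$-module of the comodule product directly as a pullback square, one of whose legs is the natural map $\Gamma\otimes_A\prod_s\Sigma^{d(s)}\Gamma\to\prod_s\Sigma^{d(s)}(\Gamma\otimes_A\Gamma)$; Lemma \ref{bdd above products prelemma} says that leg is an isomorphism, and pulling back an isomorphism gives an isomorphism, finishing the argument in one step. You instead observe that $\Gamma$ is the cofree comodule on $A$, that the cofree comodule functor $-\otimes_A\Gamma$ is right adjoint to $G$ and hence preserves products, so $\prod^{\Gamma}_s\Sigma^{d(s)}\Gamma$ is the cofree comodule on $\prod_s\Sigma^{d(s)}A$, and then apply Lemma \ref{bdd above products prelemma} to commute $\Gamma\otimes_A-$ past the product after applying $G$. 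The trade-off: your route is more purely formal (just the $G\dashv-\otimes_A\Gamma$ adjunction plus the prelemma) and avoids invoking the pullback description of comodule limits, but it incurs the extra obligation of checking that the composite isomorphism is the canonical comparison map; the paper's pullback square has that comparison map built in from the start. You flag this point and your resolution is sound: both the composite and the canonical map are determined by having $t$-th component equal to $G$ of the $t$-th comodule projection, and the adjunction and Lemma \ref{bdd above products prelemma} isomorphisms are both visibly compatible with projections.
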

\begin{proof}
The underlying graded $A$-module $G\prod^{\Gamma}_{s\in S} \Sigma^{d(s)}\Gamma$ of $\prod^{\Gamma}_{s\in S} \Sigma^{d(s)}\Gamma$ is the graded $A$-module pullback
\begin{equation}\label{pb square 1}\xymatrix{
 G\prod^{\Gamma}_{s\in S} \Sigma^{d(s)}\Gamma
  \ar[rr]\ar[d] && \Gamma\otimes_A \prod_{s\in S}\Sigma^{d(s)}\Gamma \ar[d] \\
 \prod_{s\in S} \Sigma^{d(s)}\Gamma \ar[rr]_(.45){\prod_{s\in S} \Sigma^{d(s)}\Delta} && \prod_{s\in S} \Sigma^{d(s)} \Gamma\otimes_A\Gamma.
}\end{equation}
The right-hand vertical map in \eqref{pb square 1} is the natural comparison map, which is an isomorphism by Lemma \ref{bdd above products prelemma}. Now the claim follows, since the pullback of an isomorphism is an isomorphism.
\end{proof}

\begin{theorem}\label{main thm 1}
Suppose that $\Gamma$ is a graded $A$-coalgebra which is projective as an $A$-module.
Then the following claim is true:
\begin{enumerate}
\item For each nonnegative integer $n$ and each graded left $\Gamma^*$-module $M$, we have an isomorphism $H^n_{\dist}(M) \cong \iota\left( R^n\tr(M)\right)$, natural in the variable $M$.
\end{enumerate}
Suppose furthermore that $A$ is a field, and that $\Gamma^*$ is finite-type and connected. Then the following claims are also each true:
\begin{enumerate}
\item[(2)] If $M$ is a bounded-above injective graded $\Gamma$-comodule, then the graded $\Gamma^*$-module $\iota(M)$ is injective. 
\item[(3)] The rational graded $\Gamma^*$-modules are a hereditary 
pretorsion class\footnote{We do {\em not} claim that this pretorsion class is a torsion class. Indeed, $\iota$ can fail to preserve injectivity, and rational graded modules which are not bounded above can fail to embed into any injective rational graded modules. We take up these issues in the preprint \cite{selfinjectivitypreprint}.} in $\gr\Mod(\Gamma^*)$. 
\item[(4)] If $M$ is a bounded-above graded $\Gamma^*$-module, then the distinguished local cohomology groups $H^n_{\dist}(M)$ vanish for all $n>0$. 
\item[(5)] Every bounded-above graded $\Gamma^*$-module is rational.
\item[(6)] Let $I$ be a set, and suppose that, for each $i\in I$, we have a bounded-above\footnote{To be clear: each comodule $M_i$ is assumed to be bounded above, but we do {\em not} assume that the whole collection of comodules $\{ M_i: i\in I\}$ is {\em uniformly} bounded above!} graded $\Gamma$-comodule $M_i$. Then, for each nonnegative integer $n$, the $n$th distinguished local cohomology module $H^n_{\dist}\left(\prod_{i\in I} \iota(M_i)\right)$ of the product of the graded $\Gamma^*$-modules $\iota(M_i)$ is isomorphic to $\iota$ applied to the $n$th right derived functor $R^n\prod^{\Gamma}_{i\in I}\{ M_i\}$ of the product functor $\gr\Comod(\Gamma)^I\stackrel{\prod^{\Gamma}}{\longrightarrow} \gr\Comod(\Gamma)$. That is, we have an isomorphism
\[ \iota\left( R^{n}\prod^{\Gamma}_i\left(\left\{ M_i:i\in I\right\}\right)\right)
 \cong H^n_{\dist}\left( \prod_{i\in I}\iota( M_i)\right).\]
\end{enumerate}
\end{theorem}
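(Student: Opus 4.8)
The plan is to compute the derived product $R^n\prod^{\Gamma}_i M_i$ from an injective resolution of the family $\{M_i\}$ assembled entirely out of \emph{bounded-above} injective comodules, and then to transport the computation across the embedding $\iota$ using part (2) of the theorem. Two preliminary observations set this up. First, for each $i$ the coaction $M_i\to M_i\otimes_A\Gamma$ realizes $M_i$ as a subcomodule of the cofree comodule $M_i\otimes_A\Gamma$, which is injective in $\gr\Comod(\Gamma)$ (cofree comodules over a coalgebra over a field are injective) and is bounded above since both $M_i$ and $\Gamma$ are (recall $\Gamma$ lives in nonpositive degrees). Iterating, each $M_i$ admits an injective resolution $M_i\to J_i^{\bullet}$ in $\gr\Comod(\Gamma)$ with every $J_i^k$ a bounded-above injective comodule; since a derived functor may be computed from any injective resolution, we may use $(J_i^{\bullet})_{i\in I}$ to compute $R^n\prod^{\Gamma}_i M_i = H^n\bigl(\prod^{\Gamma}_i J_i^{\bullet}\bigr)$. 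Second, because $\iota$ is full and faithful with right adjoint $\tr$, one checks by Yoneda that for any family $\{N_i\}$ of comodules the categorical product $\prod^{\Gamma}_i N_i$ is canonically $\tr\bigl(\prod_i\iota N_i\bigr)$, with $\prod$ the ordinary Cartesian product of $\Gamma^*$-modules; hence $\prod^{\Gamma}_i J_i^{\bullet}\cong\tr\bigl(\prod_i\iota J_i^{\bullet}\bigr)$ as complexes of comodules.

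Next I would pass to $\Gamma^*$-modules. Applying the exact functor $\iota$ to each $M_i\to J_i^{\bullet}$ and invoking part (2)---which applies precisely because each $J_i^k$ is a \emph{bounded-above} injective comodule---shows that $\iota J_i^{\bullet}$ is an injective resolution of $\iota M_i$ in $\gr\Mod(\Gamma^*)$. Now form $\prod_i\iota J_i^{\bullet}$. A product of injective $\Gamma^*$-modules is injective, and a product of exact sequences of $\Gamma^*$-modules is exact (the module category satisfies $AB4^*$), so $\prod_i\iota J_i^{\bullet}$ is an injective resolution of $\prod_i\iota M_i$ in $\gr\Mod(\Gamma^*)$. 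This is the one place where the bounded-above hypothesis on the $M_i$ is essential: without part (2) there would be no reason for the modules $\iota J_i^k$, or their product, to be injective.

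Finally I would read off the distinguished local cohomology from this resolution. By Corollary \ref{H0 rat and iota}, $H^0_{\dist}\cong\iota\circ\tr$, so
\[
 H^n_{\dist}\Bigl(\prod_i\iota M_i\Bigr)=H^n\Bigl(H^0_{\dist}\bigl(\prod_i\iota J_i^{\bullet}\bigr)\Bigr)=H^n\Bigl(\iota\,\tr\bigl(\prod_i\iota J_i^{\bullet}\bigr)\Bigr)\cong H^n\Bigl(\iota\bigl(\prod^{\Gamma}_i J_i^{\bullet}\bigr)\Bigr),
\]
the last step being the identification $\tr\bigl(\prod_i\iota(-)\bigr)\cong\prod^{\Gamma}_i(-)$ from the first paragraph. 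Since $\iota$ is exact it commutes with passage to cohomology, so the right-hand side equals $\iota\bigl(H^n(\prod^{\Gamma}_i J_i^{\bullet})\bigr)=\iota\bigl(R^n\prod^{\Gamma}_i M_i\bigr)$, which is the asserted isomorphism. Naturality in the family $\{M_i\}$ follows from the functoriality of the comparison of injective resolutions together with the naturality of $H^0_{\dist}\cong\iota\tr$ and of $\prod^{\Gamma}_i\cong\tr\circ\prod\circ\iota^I$.

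Once parts (1)--(5) are in hand, I expect the whole argument to be bookkeeping; the only non-formal inputs are Corollary \ref{H0 rat and iota} and, above all, part (2). The delicate point to phrase carefully is that $R^n\prod^{\Gamma}$ must here be computed from resolutions by bounded-above injectives so that part (2) is available, and that doing so is legitimate because bounded-above comodules admit such (indeed cofree) resolutions and derived functors are resolution-independent. If one wanted part (6) without the bounded-above hypothesis, the obstruction would be exactly the failure of part (2): $\iota$ need not preserve injectivity of unbounded comodules, so $\prod_i\iota J_i^{\bullet}$ would cease to be visibly an injective resolution and the strategy would collapse.
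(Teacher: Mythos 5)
Your proof of part (6) is correct and is, at bottom, the same argument as the paper's, just with the Grothendieck spectral sequence unwound into an explicit resolution chase. Both proofs hinge on the exactness of $\prod_I\circ\iota$ together with part (2) (so that the product of the $\iota J_i^k$'s is injective), and both use the identification $\prod^{\Gamma}_i = \tr\circ\prod_I\circ\iota^I$; the paper feeds this composite into a Grothendieck spectral sequence that collapses, then invokes part (1), while you compute $R^n\prod^{\Gamma}$ and $H^n_{\dist}$ from one and the same injective resolution and invoke Corollary~\ref{H0 rat and iota} directly, which amounts to inlining the proof of part (1).
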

\begin{proof}
\begin{enumerate}
\item Since $\tr$ has an exact left adjoint (namely, $\iota$), $\tr$ sends injectives to injectives. Consequently we have a Grothendieck spectral sequence $R^s\iota \left(R^t\tr(M)\right) \Rightarrow R^{s+t}(\iota\circ\tr)(M)$, and it collapses to the $s=0$ line since $\iota$ is exact. We consequently have isomorphisms $\iota \left( R^t\tr(M)\right)\cong R^t(\iota\circ \tr)(M)\cong H^t_{\dist}(M)$ due to Corollary \ref{H0 rat and iota}.
\item The injective graded $\Gamma$-comodules are the retracts of the extended graded $\Gamma$-comodules. Since $A$ is a field, the extended graded $\Gamma$-comodules are the coproducts of suspensions of $\Gamma$ itself. Consequently, if we can show that $\iota(M)$ is injective when $M$ is a bounded-above coproduct of suspensions of $\Gamma$, then $\iota(M)$ must be injective for all bounded-above injective $\Gamma$-comodules $M$.

To that end, we suppose that $M$ is the graded $\Gamma$-comodule $\coprod_{s\in S} \Sigma^{d(s)} \Gamma$, where $S$ is a set, and where $d: S\rightarrow\mathbb{Z}$ is a function whose image in $\mathbb{Z}$ is bounded above. Then $M$ is the extended graded $\Gamma$-comodule on the graded $A$-vector space $\coprod_{s\in S} \Sigma^{d(s)}A$. The natural map of graded $A$-vector spaces $\coprod_{s\in S} \Sigma^{d(s)}A \rightarrow \prod_{s\in S} \Sigma^{d(s)}A$ is a split monomorphism, so upon applying the extended comodule functor $E$, we have that the map of graded $\Gamma$-comodules
\[ \coprod_{s\in S} \Sigma^{d(s)} \Gamma \cong E\left( \coprod_{s\in S} \Sigma^{d(s)}A\right) \rightarrow E\left(\prod_{s\in S} \Sigma^{d(s)}A\right) \cong \prod^{\Gamma}_{s\in S} \Sigma^{d(s)} \Gamma \]
is also a split monomorphism. 
Applying $\iota$ then yields a split monomorphism of graded $\Gamma^*$-modules
\begin{equation}\label{split mono 30} \iota M \cong \coprod_{s\in S} \Sigma^{d(s)} \iota\Gamma  \rightarrow \iota\prod^{\Gamma}_{s\in S} \Sigma^{d(s)} \Gamma. \end{equation}
If we knew that the canonical comparison map 
\begin{equation}\label{comparison map 31}\iota\prod^{\Gamma}_{s\in S} \Sigma^{d(s)} \Gamma \rightarrow \prod_{s\in S} \Sigma^{d(s)}\iota\Gamma,\end{equation} were an isomorphism, then \eqref{split mono 30} would exhibit $\iota M$ as a summand in a product of injective graded $\Gamma^*$-modules, hence $\iota M$ would be injective. By the assumption that the degrees $\{ d(s): s\in S\}$ are bounded above and by Lemma \ref{bdd above products lemma}, the map \eqref{comparison map 31} is indeed an isomorphism, and we are done\footnote{Without the bounded-aboveness assumption on the degrees $\{ d(s): s\in S\}$, the map \eqref{comparison map 31} is not always an isomorphism. Indeed, if \eqref{comparison map 31} were always an isomorphism regardless of any degree bounds, then by the argument just given, $\iota$ would send every injective comodule to an injective module. In the preprint \cite{selfinjectivitypreprint} one can find a proof that $\iota$ unfortunately does not have that desirable property. 

The same preprint also contains a completely different proof of claim 2 of this theorem, i.e., the claim that $\iota$ sends bounded-above injective graded comodules to injective graded modules.}.
\item[(3)] Corollary \ref{rational iff dist-torsion} together with Proposition \ref{rational modules are hereditary pretorsion} establishes that the rational modules form a hereditary pretorsion class.
\item[(4, part 1)] We first prove that $H^n_{\dist}(M)$ vanishes for $n>0$ for all bounded-above graded {\em rational} $\Gamma^*$-modules $M$. We will then return and finish the proof of claim (4), after proving (5), in order to lift the rationality assumption on $M$.

Let $M$ be a bounded-above rational graded $\Gamma^*$-module. Then the extended graded $\Gamma$-comodule on the underlying graded $\mathbb{F}_p$-vector space of $\tr(M)$ is also bounded-above. Hence $\tr(M)$ admits a resolution by bounded-above graded-injective $\Gamma$-comodules. Since we have already shown that $\iota$ sends bounded-above injectives to bounded-above injectives, $M$ admits a resolution $\mathcal{I}^{\bullet}$ by rational, graded-injective $\Gamma^*$-modules. Consequently $(\iota\circ\tr)(\mathcal{I}^{\bullet})\simeq \mathcal{I}^{\bullet}$ is acyclic.

Consider again the same Grothendieck spectral sequence
\begin{align*}
 (R^s\iota\circ R^t\tr)(M) &\Rightarrow R^{s+t}(\iota\circ\tr)(M)
\end{align*}
from the proof of claim (1) of this theorem.
Its $E_2^{s,t}$-term $(R^s\iota\circ R^t\tr)(M)$ is isomorphic to $\iota(R^t\tr(M))$, since $\iota$ is exact, so the spectral sequence collapses to the $s=0$-line with no differentials. Since $M$ is rational, the spectral sequence's abutment $R^{*}(\iota\circ\tr)(M)$ vanishes in degrees $*>0$, by the acyclicity argument in the previous paragraph. Hence $\iota R^t\tr(M)$, i.e., $H^t_{\dist}(M)$, vanishes for $t>0$.
\item[(5)]
Suppose that $M$ is a graded $\Gamma^*$-module. For each integer $n$, write $\conn_n(M)$ for the graded sub-$\Gamma^*$-module of $M$ generated by all homogeneous elements of degree $\geq n$. Then we have the sequence of monomorphisms \begin{equation}\label{filt 13}\dots \hookrightarrow \conn_n(M) \hookrightarrow \conn_{n-1}(M) \hookrightarrow \conn_{n-2}(M) \hookrightarrow\dots\end{equation}
of graded $\Gamma^*$-modules, and its colimit is $M$. If $M$ is bounded-above, then each of the submodules $\conn_n(M)$ is bounded (i.e., both bounded-above and bounded-below). 

Consequently every bounded-above graded $\Gamma^*$-module is a colimit of bounded graded $\Gamma^*$-modules.
By Proposition \ref{rationals closed under quots and subs}, the rational $\Gamma^*$-modules are closed under cokernels and coproducts in $\gr\Mod(\Gamma^*)$, hence closed under all small colimits in $\gr\Mod(\Gamma^*)$. Hence, if we can show that the bounded graded $\Gamma^*$-modules are rational, then all the bounded-above graded $\Gamma^*$-modules will be rational.

So we suppose that $N$ is a bounded graded $\Gamma^*$-module. We carry out an induction to prove that $H^n_{\dist}(N)$ vanishes for $n>0$. The initial step in the induction is the case in which $N$ is concentrated in a single degree. Since $\Gamma^*$ is connected and $A$ is a field, we have that $N$ splits (as a $\Gamma^*$-module) as a coproduct of copies of $\Gamma^*/\conn_1(\Gamma^*) \cong A$. 
It is straightforward to see that the graded $\Gamma^*$-module $\Gamma^*/\conn_1(\Gamma^*)$ must be rational. By Proposition \ref{rationals closed under quots and subs}, a coproduct of rational modules is rational, so $N$ is rational, as desired. This completes the initial step in the induction.

The inductive step is as follows: suppose $m$ is a nonnegative integer, and suppose that we have already shown that $N^{\prime}$ is rational for all graded $\Gamma^*$-modules $N^{\prime}$ which are trivial except in $\leq m$ consecutive degrees. (That is, the inductive hypothesis is that, if $r$ is an integer and $N^{\prime}$ is a graded $\Gamma^*$-module which is trivial except in grading degrees $r, r+1, \dots ,r+m-1$, then $N^{\prime}$ is rational.) 
Suppose that $N$ is a graded $\Gamma^*$-module which is trivial except in $m+1$ consecutive degrees. Let $r$ be the lowest degree in which $N$ is nontrivial. Let $N^{\geq r+1}$ be the graded $A$-submodule of $N$ generated by all homogeneous elements of degree $\geq r+1$.
Then $N^{\geq r+1}$ is a graded $\Gamma^*$-submodule of $N$, since $\Gamma^*$ is connective. We consequently have a short exact sequence of graded $\Gamma^*$-modules
\begin{equation}\label{ses 04040} 0 \rightarrow N^{\geq r+1} \rightarrow N \rightarrow N/N^{\geq r+1} \rightarrow 0\end{equation}
and $N^{\geq r+1}$ is trivial except in $\leq m$ consecutive degrees, hence is rational by the inductive hypothesis, and consequently $H^n_{\dist}(N^{\geq r+1})$ vanishes for all $n>0$, by the previous part of this theorem.
Consequently (and using the left-exactness of $H^0_{\dist}$, proven in Corollary \ref{H0 is left exact}), applying $H^*_{\dist}$ to \eqref{ses 04040} yields exactness of the top row in the commutative diagram
\begin{equation}\label{comm diag 0595959}
\xymatrix{
 0 \ar[r] \ar[d] &
  H^0_{\dist}(N^{\geq r+1}) \ar[r]\ar[d]^{\cong} &
  H^0_{\dist}(N) \ar[r]\ar[d] &
  H^0_{\dist}(N/N^{\geq r+1}) \ar[r] \ar[d]^{\cong} &
  0 \ar[d] \\
 0 \ar[r] &
  N^{\geq r+1} \ar[r] &
  N \ar[r] &
  N/N^{\geq r+1} \ar[r] &
 0.}\end{equation}
The vertical maps indicated with the symbol $\cong$ in diagram \eqref{comm diag 0595959} are isomorphisms by the initial step (for $N/N^{\geq r+1}$, since it is concentrated in a single degree), and by the inductive hypothesis (for $N^{\geq r+1}$). 
So $H^0_{\dist}(N)\rightarrow N$ is also an isomorphism, i.e., $N$ is also rational, completing the inductive step. So every bounded graded $\Gamma^*$-module is rational, as desired.
\item[(4, part 2)] Now it is easy to finish the proof of claim (4): we have already shown that $H^n_{\dist}(M)$ vanishes for all bounded-above graded rational $\Gamma^*$-modules $M$, and we have just shown that every bounded-above $\Gamma^*$-module is rational. So $H^n_{\dist}$ vanishes for $n>0$ on any bounded-above $\Gamma^*$-module.
\item[(6)]
Since products of injectives are injective, the functor $\prod_I:\gr\Mod(\Gamma^*)^I \rightarrow\gr\Mod(\Gamma^*)$ preserves injectives. We have shown that $\iota$ also preserves bounded-above injectives. It is classical and straightforward that an object of the functor category $\gr\Mod(\Gamma^*)^I$ is injective if and only if it is objectwise injective. So, if $M_i$ is a bounded-above graded-injective $\Gamma$-comodule for each $i\in I$, then the product $\prod_{i\in I} \iota(M_i)$ is a graded-injective $\Gamma^*$-module.
So we get a Grothendieck spectral sequence
\begin{align*}
 E_2^{s,t} \cong R^s\tr R^t\left(\prod_{i\in I}\circ\iota\right)\left(\left\{ M_i:i\in I\right\}\right) &\Rightarrow R^{s+t}\left( \tr\circ\prod_{i\in I}\circ\iota\right)\left(\left\{ M_i:i\in I\right\}\right) 
\\ &\cong R^{s+t}\left( \prod^{\Gamma}_i\circ \tr\circ\iota\right)\left(\left\{ M_i:i\in I\right\}\right) 
\\ &\cong R^{s+t} \prod^{\Gamma}_i\left(\left\{ M_i:i\in I\right\}\right).
\end{align*}
This spectral sequence collapses to the $t=0$ line at its $E_2$-page, since $\iota$ and $\prod_I$ are each exact and so their composite $\prod_I\circ\iota$ is exact. 
Consequently we have isomorphisms
\begin{align}
\nonumber \iota R^{s}\prod^{\Gamma}_i\left(\left\{ M_i:i\in I\right\}\right)
  &\cong \iota R^s\tr \left(\prod_{i\in I}\iota( M_i)\right) \\
\label{iso 00094912}  &\cong H^s_{dist}\left( \prod_{i\in I}\iota( M_i)\right) ,
\end{align}
with isomorphism \eqref{iso 00094912} due to the first part of this theorem.
\end{enumerate}
\end{proof}

Among other things, Theorem \ref{main thm 1} proves that the higher distinguished local cohomology groups vanish on the bounded-above graded modules which come (via $\iota$) from comodules.
One might try to think of this result as telling us that distinguished local cohomology $H^*_{\dist}(M)$ is a cohomology theory that tells us how far the module $M$ is from being a comodule. This requires a bit of care. For example, the higher distinguished local cohomology groups {\em also} vanish on some $\Gamma^*$-modules which {\em don't} come from comodules. In Theorem 12 of section 13.3 of \cite{MR738973}, Margolis proves that the Steenrod algebras are self-injective, and more generally, that if $\Gamma^*$ is a $\mathcal{P}$-algebra in the sense of Margolis, then $\Gamma^*$ is self-injective. Letting $\Gamma^*$ be the Steenrod algebra, all the hypotheses of Theorem \ref{main thm 1} are satisfied, but not only does $H^n_{\dist}(\Gamma^*)$ vanish for all $n>0$, we also have that $H^0_{\dist}(\Gamma^*)$ vanishes. 
So $\Gamma^*$ cannot be in the image of $\iota$, since $0\cong H^0_{\dist}(\Gamma^*) \cong \iota(\tr(\Gamma^*))$.

So while one can truthfully say that the higher $H^*_{\dist}$ groups ``detect the failure of a bounded-above $\Gamma^*$-module to be a $\Gamma$-comodule,'' it is not the case that the $\Gamma$-comodules are {\em precisely} those $\Gamma^*$-modules on which the higher $H^*_{\dist}$ groups vanish. Still, there are satisfying structural relationships between the category of $\Gamma$-comodules and the category of $\Gamma^*$-modules, such as the following theorem, which establishes that every graded $\Gamma^*$-module is an extension of an $n$-co-connected rational graded $\Gamma^*$-module by an $n$-connective graded $\Gamma^*$-module. Recall that a graded abelian group is said to be {\em $n$-co-connected} if it is trivial in degrees $\geq n$, and {\em $n$-connective} if it is trivial in degrees $<n$.
\begin{theorem}\label{structure thm}
Suppose that $A$ is a field, and that $\Gamma$ is a graded $A$-coalgebra such that the dual algebra $\Gamma^*$ is finite-type and connected. Let $n$ be an integer. Then, for each graded $\Gamma^*$-module $M$, we have 
a short exact sequence of graded $\Gamma^*$-modules
\begin{equation}\label{ses 40909090} 0 \rightarrow \conn_n(M) \rightarrow M \rightarrow \iota(\comod_n(M)) \rightarrow 0,\end{equation}
where $\conn_n(M)$ is a {\em $n$-connective} graded $\Gamma^*$-module, and $\comod_n(M)$ is an {\em $n$-co-connected} graded $\Gamma$-comodule. This sequence is natural in the variable $M$.

Furthermore, the higher distinguished local cohomology of $M$ depends only on $\conn_n(M)$. That is, $H^i_{\dist}(\conn_n(M)) \rightarrow H^i_{\dist}(M)$ is an isomorphism for all $n$ and for all $i>0$.
\end{theorem}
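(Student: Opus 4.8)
The plan is to write the short exact sequence down explicitly, check naturality by inspection, and deduce the ``furthermore'' clause from the long exact sequence of distinguished local cohomology together with the vanishing in Theorem \ref{main thm 1}(4).

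First I would set $\conn_n(M)$ to be the graded $\Gamma^*$-submodule of $M$ generated by the homogeneous elements of degree $\geq n$, exactly as in the proof of Theorem \ref{main thm 1}. Since $\Gamma^*$ is connected, hence concentrated in non-negative degrees, this submodule is simply the truncation $M^{\geq n}$; in particular it is $n$-connective, and the quotient $M/\conn_n(M)$ is concentrated in degrees $< n$, so it is $n$-co-connected and a fortiori bounded above. By Theorem \ref{main thm 1}(5) every bounded-above graded $\Gamma^*$-module is rational, so $M/\conn_n(M)$ is rational; thus $M/\conn_n(M) \cong \iota(N)$ for an essentially unique graded $\Gamma$-comodule $N$, and I would set $\comod_n(M) := N = \tr(M/\conn_n(M))$. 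This comodule has the same underlying graded $A$-module as $M/\conn_n(M)$, so it is $n$-co-connected, and the inclusion $\conn_n(M) \hookrightarrow M$ together with the identification of the cokernel gives the short exact sequence \eqref{ses 40909090}. Naturality is immediate: any morphism $M \rightarrow M'$ of graded $\Gamma^*$-modules carries homogeneous elements of degree $\geq n$ to homogeneous elements of degree $\geq n$, so $\conn_n$ is a subfunctor of the identity on $\gr\Mod(\Gamma^*)$; forming cokernels and applying $\tr$ then makes $\comod_n$ functorial and the whole sequence natural in $M$.

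For the final assertion I would apply the long exact sequence of the right derived functors of $H^0_{\dist}$ --- available since $H^0_{\dist}$ is left exact by Corollary \ref{H0 is left exact} and $\gr\Mod(\Gamma^*)$ has enough injectives --- to the short exact sequence just built. As $M/\conn_n(M)$ is bounded above, Theorem \ref{main thm 1}(4) gives $H^i_{\dist}(M/\conn_n(M)) = 0$ for every $i > 0$, so in the long exact sequence
\[ \cdots \rightarrow H^{i-1}_{\dist}(M/\conn_n(M)) \rightarrow H^i_{\dist}(\conn_n(M)) \rightarrow H^i_{\dist}(M) \rightarrow H^i_{\dist}(M/\conn_n(M)) \rightarrow \cdots \]
both outer terms vanish whenever $i \geq 2$, yielding the isomorphism $H^i_{\dist}(\conn_n(M)) \cong H^i_{\dist}(M)$ for $i \geq 2$ at once.

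The case $i = 1$ is where I expect the genuine difficulty to lie: from the long exact sequence the comparison map $H^1_{\dist}(\conn_n(M)) \rightarrow H^1_{\dist}(M)$ is always surjective, and it is injective precisely when the map $H^0_{\dist}(M) \rightarrow H^0_{\dist}(M/\conn_n(M)) = M/\conn_n(M)$ is surjective --- equivalently, when $M = \tr(M) + \conn_n(M)$, i.e. when every homogeneous element of $M$ in degree $< n$ can be adjusted, modulo $\conn_n(M)$, so as to lie in a rational submodule of $M$. Establishing this lifting property (using, for instance, Theorem \ref{theta-rationality and torsion} to detect rational submodules via their annihilators, together with an analysis of the bounded submodules of $M$) is the step on which I would concentrate all the effort; the remainder of the argument is purely formal.
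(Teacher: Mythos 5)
Your construction of the short exact sequence is exactly the paper's, and the paper's own proof consists of nothing beyond that construction --- the ``furthermore'' clause receives no argument whatsoever. Your long exact sequence argument correctly yields the isomorphism $H^i_{\dist}(\conn_n(M)) \cong H^i_{\dist}(M)$ for $i \geq 2$ and surjectivity for $i = 1$, and you are right that the remaining question is whether $M = \tr(M) + \conn_n(M)$. But this is where you should stop pushing: that lifting property, and with it the $i = 1$ case of the stated isomorphism, is false in general.

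Take $\Gamma^*$ to be the Steenrod algebra, $M = \Gamma^*$ itself, and $n = 1$, so that $\conn_1(\Gamma^*) = I_1$ is the augmentation ideal. Margolis's self-injectivity theorem (Theorem \ref{thm from margolis 2}) gives $H^i_{\dist}(\Gamma^*) = 0$ for all $i > 0$, and the discussion following Theorem \ref{main thm 1} observes that $H^0_{\dist}(\Gamma^*) = 0$ as well; hence $\tr(\Gamma^*) = 0$ and $\tr(\Gamma^*) + I_1 = I_1 \neq \Gamma^*$. Explicitly, the long exact sequence of $0 \to I_1 \to \Gamma^* \to A \to 0$ gives $H^1_{\dist}(I_1) \cong H^0_{\dist}(A) = A \neq 0 = H^1_{\dist}(\Gamma^*)$, so the comparison map at $i = 1$ has nontrivial kernel. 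What your argument actually establishes --- isomorphism for $i \geq 2$, epimorphism for $i = 1$ --- is all the paper ever uses (the appeal to Theorem \ref{structure thm} in the proof of Theorem \ref{main thm 4} requires only the surjectivity), so the downstream results are unaffected; the statement of the theorem, however, should be weakened accordingly.
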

\begin{proof}
Let $\conn_n(M)$ simply be the graded $\Gamma^*$-submodule of $M$ generated by all elements in degrees $\geq n$. The quotient $M/\conn_n(M)$ is then $n$-co-connected, hence bounded above, hence is $\iota$ of a graded $\Gamma$-comodule by Theorem \ref{main thm 1}. 
\end{proof}
It is easy to see that the sequence \eqref{ses 40909090} is natural in the integer $n$, in the sense that we have a commutative diagram of graded $\Gamma^*$-modules
\begin{equation}\label{comm diag 40990909}\xymatrix{
\vdots \ar[d] &
 \vdots \ar@{^{(}->}[d] &
 \vdots \ar[d] &
 \vdots \ar@{->>}[d] &
 \vdots \ar[d] \\
0 \ar[r] \ar[d] & 
 \conn_{n+1}(M) \ar[r]\ar@{^{(}->}[d] & 
 M \ar[r]\ar[d]^{\id} & 
 \iota(\comod_{n+1}(M))\ar[r]\ar@{->>}[d] & 0 \ar[d] \\
0 \ar[r] \ar[d] & 
 \conn_n(M) \ar[r]\ar@{^{(}->}[d] & 
 M \ar[r]\ar[d]^{\id} & 
 \iota(\comod_n(M))\ar[r]\ar@{->>}[d] & 0 \ar[d] \\
0 \ar[r] \ar[d] & 
 \conn_{n-1}(M) \ar[r]\ar@{^{(}->}[d] & 
 M \ar[r]\ar[d]^{\id} & 
 \iota(\comod_{n-1}(M))\ar[r]\ar@{->>}[d] & 0 \ar[d] \\
\vdots &
 \vdots &
 \vdots &
 \vdots &
 \vdots 
}\end{equation}
with exact rows.
Taking the limit of each column in \eqref{comm diag 40990909} yields:
\begin{corollary}\label{modules are limits of comodules}
Suppose that $A$ is a field, and that $\Gamma^*$ satisfies the hypotheses stated in Theorem \ref{structure thm}. Then every graded $\Gamma^*$-module is the limit of a Mittag-Leffler sequence of rational graded $\Gamma^*$-modules. 
\end{corollary}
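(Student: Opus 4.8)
The plan is to read the statement straight off the commutative diagram \eqref{comm diag 40990909}, whose rows are the short exact sequences $0 \to \conn_n(M) \to M \to \iota(\comod_n(M)) \to 0$ furnished by Theorem \ref{structure thm}. Restricting the indexing to $n \geq 0$, this diagram is a short exact sequence of $\mathbb{N}$-indexed towers in $\gr\Mod(\Gamma^*)$:
\[ 0 \to \{\conn_n(M)\}_{n\geq 0} \to \{M\}_{n\geq 0} \to \{\iota(\comod_n(M))\}_{n\geq 0} \to 0. \]
The middle tower is constant; the right-hand tower has surjective transition maps (that is the right-hand column of \eqref{comm diag 40990909}), hence is Mittag-Leffler; and each $\iota(\comod_n(M))$ is rational, being in the image of $\iota$. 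So everything reduces to identifying $\lim_n \iota(\comod_n(M))$ with $M$.

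\textbf{Key steps.} First I would check that the left-hand tower $\{\conn_n(M)\}_{n\geq 0}$ is pro-zero. A homogeneous element of $\conn_n(M)$ in degree $d$ is a $\Gamma^*$-linear combination $\sum_i \gamma_i m_i$ with each $m_i$ of degree $\geq n$ and $\gamma_i$ of degree $d - |m_i|$; since $\Gamma^*$ is connected this forces $n \leq |m_i| \leq d$, so $\conn_n(M)$ vanishes in all degrees $<n$. In particular, for each fixed degree $d$ the group $\conn_n(M)_d$ is zero whenever $n > d$, so the tower is degreewise eventually zero, whence $\lim_n \conn_n(M) = 0$ and $\lim^1_n \conn_n(M) = 0$. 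Then I would feed this, together with $\lim_n M = M$ and $\lim^1_n M = 0$, into the six-term $\lim$--$\lim^1$ exact sequence attached to the short exact sequence of towers above; it collapses to an isomorphism $M \xrightarrow{\ \cong\ } \lim_n \iota(\comod_n(M))$ (and, incidentally, $\lim^1_n \iota(\comod_n(M)) = 0$). This exhibits $M$ as the limit of the Mittag-Leffler sequence $\cdots \to \iota(\comod_2(M)) \to \iota(\comod_1(M)) \to \iota(\comod_0(M))$ of rational graded $\Gamma^*$-modules, which is the claim.

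\textbf{Remarks on difficulty.} Theorem \ref{structure thm} has already done the substantive work, so the rest is bookkeeping: one notes that restricting the $\mathbb{Z}$-indexed diagram \eqref{comm diag 40990909} to indices $n\geq 0$ does not change the limit (the entries at negative indices are determined by the transition maps), and that limits in $\gr\Mod(\Gamma^*)$ are computed degreewise. The one genuinely non-formal input is the pro-zero verification above, which relies on connectivity of $\Gamma^*$; but it is elementary, and it also lets one bypass $\lim^1$ entirely, since in each degree $d$ the tower $\{(M/\conn_n(M))_d\}_n$ is eventually constant at $M_d$ with identity transition maps, so its inverse limit is $M_d$ and hence $\lim_n \iota(\comod_n(M)) = M$.
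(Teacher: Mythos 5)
Your proposal is correct and follows essentially the same line as the paper: both read off the short exact sequence of towers from \eqref{comm diag 40990909}, observe that connectivity of $\Gamma^*$ makes $\conn_n(M)$ vanish in degrees $<n$ so that the left-hand tower is degreewise eventually zero (hence pro-zero, hence $\lim = \lim^1 = 0$), and conclude $M \cong \lim_n \iota(\comod_n(M))$, a limit of a sequence of rational modules whose transition maps are surjective and therefore Mittag-Leffler. The only difference is cosmetic: you spell out the connectivity computation and the six-term exact sequence, where the paper records them more tersely.
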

\begin{proof}
The sequence of monomorphisms (i.e., the left-hand nonzero column) in \eqref{comm diag 40990909} is eventually constant in each grading degree, hence is a Mittag-Leffler sequence in each grading degree. Consequently $R^1\underset{n\rightarrow\infty}{\lim}\left(\conn_n(M)\right)$ vanishes in the category $\gr\Mod(\Gamma^*)$. (This argument does not show that $R^1\underset{n\rightarrow\infty}{\lim}\left( \conn_n(M)\right)$ vanishes in the ungraded category $\Mod(\Gamma^*)$: the grading is important here.) Consequently we have the isomorphism $M\stackrel{\cong}{\longrightarrow}\underset{n\rightarrow\infty}{\lim}\iota(\comod_n(M))$ in $\gr\Mod(\Gamma^*)$.
\end{proof}
It would be nice to have a rigorous way to interpret Theorem \ref{structure thm} as stating that ``the category of graded $\Gamma^*$-modules is an extension of the category of co-connected $\Gamma$-comodules by the category of connective $\Gamma^*$-modules,'' but there does not seem to be a notion of ``extension of abelian categories'' in the literature which is of the right kind of generality to include the situation of Theorem \ref{structure thm} as an example. Marmaridis's notion of ``extensions of abelian categories,'' from \cite{MR1213784}, does not suffice, since $\gr\Mod(\Gamma^*)$ is not monadic or comonadic over connective $\Gamma^*$-modules or over co-connected $\Gamma$-comodules (see Theorem 2.6 of \cite{MR1780016} for the relationship between (co)monadicity and Marmaridis's extension theory). The situation of Theorem \ref{structure thm} is not a special case of a ``deformation of abelian categories'' in the sense of \cite{MR2238922} either.

\begin{corollary}\label{uniqueness of module cat}
Let $A,\Gamma^*$ be as in Corollary \ref{modules are limits of comodules}. Then the only full subcategory of $\gr\Mod(\Gamma^*)$ which contains the rational $\Gamma^*$-modules and which is closed under kernels and countable products is $\gr\Mod(\Gamma^*)$ itself.
\end{corollary}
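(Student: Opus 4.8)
The plan is to deduce the statement from Corollary \ref{modules are limits of comodules} together with the elementary fact that the limit of a tower is the kernel of an endomorphism of a countable product. So I would fix a full subcategory $\mathcal{D}\subseteq\gr\Mod(\Gamma^*)$ which contains every rational graded $\Gamma^*$-module and is closed under kernels and countable products, fix an arbitrary graded $\Gamma^*$-module $M$, and aim to show $M\in\mathcal{D}$. By Corollary \ref{modules are limits of comodules}, $M$ is the limit of a Mittag--Leffler sequence $\dots\rightarrow N_2\rightarrow N_1\rightarrow N_0$ of rational graded $\Gamma^*$-modules; from the proof of that corollary one may take $N_n=\iota(\comod_n(M))$, and since a cofinal tail of the $\mathbb{Z}$-indexed inverse system $\{n\ge n_0\}$ computes the same limit, there is no harm in taking the indexing set to be $\mathbb{N}$, so that the product $\prod_{n\ge 0}N_n$ below really is a countable product. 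Write $f_n\colon N_{n+1}\rightarrow N_n$ for the structure maps.

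Next I would use the standard difference-kernel description of a sequential limit. Define a graded $\Gamma^*$-module homomorphism $\mathrm{shift}\colon\prod_{n\ge 0}N_n\rightarrow\prod_{n\ge 0}N_n$ by $\mathrm{shift}\big((x_n)_{n\ge 0}\big)=\big(f_n(x_{n+1})\big)_{n\ge 0}$. An element $(x_n)_{n\ge 0}$ of $\prod_{n\ge 0}N_n$ lies in $\ker(\mathrm{id}-\mathrm{shift})$ exactly when $x_n=f_n(x_{n+1})$ for all $n$, i.e.\ exactly when $(x_n)_{n\ge 0}$ is a point of $\lim_n N_n$; hence there is a natural isomorphism $M\cong\lim_n N_n\cong\ker\!\big(\mathrm{id}-\mathrm{shift}\colon\prod_{n\ge 0}N_n\rightarrow\prod_{n\ge 0}N_n\big)$.

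Finally I would assemble the conclusion: each $N_n$ is rational, hence lies in $\mathcal{D}$; since $\mathcal{D}$ is closed under countable products, $\prod_{n\ge 0}N_n\in\mathcal{D}$; and since $\mathcal{D}$ is full, $\mathrm{id}-\mathrm{shift}$ is a morphism of $\mathcal{D}$, so by closure under kernels its kernel $M$ lies in $\mathcal{D}$. As $M$ was arbitrary, $\mathcal{D}=\gr\Mod(\Gamma^*)$. I do not expect any real obstacle here; the only point requiring a line of care is the reduction from the $\mathbb{Z}$-indexed system produced in the proof of Corollary \ref{modules are limits of comodules} to a genuine $\mathbb{N}$-indexed tower, which is immediate because $\{n\ge n_0\}$ is cofinal for the purpose of computing that inverse limit, and hence the product appearing in the difference-kernel formula is indexed by a countable set.
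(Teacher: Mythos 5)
Your argument is exactly the paper's one-line proof, unpacked: the paper invokes the same fact that a sequential limit in $\gr\Mod(\Gamma^*)$ is the kernel of a map between countable products, and then applies Corollary \ref{modules are limits of comodules}; you have simply made the difference-kernel formula $M\cong\ker(\id-\mathrm{shift})$ explicit and noted the harmless reindexing over $\mathbb{N}$. Correct, and the same approach.
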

\begin{proof}
Sequential limits in $\gr\Mod(\Gamma^*)$ are kernels of maps between countable products, so this follows from Corollary \ref{modules are limits of comodules}.
\end{proof}

\section{Mitchell coalgebras.}
\label{Mitchell coalgebras...}

Throughout this section, and for the rest of this paper, we assume that the ground ring $A$ is a field, so that we may freely use Theorem \ref{main thm 1}.

A technically complicated, but nevertheless important, example of a filtered ideal set arises from a coalgebra satisfying the {\em Mitchell condition,} which we define in Definition \ref{def of mitchell condition}. It requires that we first recall (from chapter 13 of \cite{MR738973}) the definition of a  ``$\mathcal{P}$-algebra'':
\begin{definition}\label{def of p-alg}
A {\em $\mathcal{P}$-algebra} is a union of a sequence of subalgebras $B(0) \subsetneq B(1) \subsetneq \dots$ such that each $B(n)$ is a Poincar\'{e} algebra, and each $B(n+1)$ is flat over $B(n)$. Here a ``Poincar\'{e} algebra,'' as in \cite{MR335572}, is a finite-dimensional graded connected 
algebra $A$ over a field $k$ such that there exists a map of graded $k$-modules $e: A\rightarrow \Sigma^{-n} k$, for some integer $n$, such that the pairing $A^q\otimes_k A^{n-q}\stackrel{\nabla}{\longrightarrow} A^n \stackrel{\Sigma^n e}{\longrightarrow} k$ is nonsingular.
\end{definition}
Of course the most important examples of $\mathcal{P}$-algebras are the Steenrod algebras: the mod $p$ Steenrod algebra is a $\mathcal{P}$-algebra for every prime $p$, by Proposition 7 from section 15.1 of \cite{MR738973}.

\begin{definition}\label{def of mitchell condition}
Suppose $\Gamma$ is a graded coalgebra over a field $A$. 
By a {\em Mitchell decomposition of $\Gamma$} we mean the following data:
\begin{itemize} 
\item a sequence $\dots \rightarrow \Gamma(2)\rightarrow \Gamma(1) \rightarrow\Gamma(0)$ of surjective graded $A$-coalgebra morphisms, with each $\Gamma(n)$ a graded quotient $A$-coalgebra of $\Gamma$, 
such that each of the dual algebras $\Gamma^*(n)$ is a Poincar\'{e} algebra, and such that the left $\Gamma^*(n)$-action on $\Gamma^*(n+1)$ arising from the dual map $\Gamma^*(n)\rightarrow\Gamma^*(n+1)$ of $\Gamma(n+1)\rightarrow\Gamma(n)$ makes $\Gamma^*(n+1)$ flat over $\Gamma^*(n)$.
\item For each nonnegative integer $n$, a homogeneous element $\omega_n \in \Gamma(n)$ whose associated map of graded $A$-modules $e:\Gamma^*(n)\rightarrow \Sigma^{-\left| \omega_n\right|} A$ has the property that the pairing 
\begin{equation}\label{pairing 0439} \Gamma^*(n)^q\otimes_A \Gamma^*(n)^{\left| \omega_n\right|-q}\stackrel{\nabla}{\longrightarrow} \Gamma^*(n)^{\left| \omega_n\right|} \stackrel{\Sigma^{\left| \omega_n\right|} e}{\longrightarrow} A\end{equation} is nonsingular.
\item For each nonnegative integer $n$, an extension of the natural graded left $\Gamma^*(n)$-module structure of $\Gamma^*(n)$ to a graded left action of $\Gamma^*$ on $\Gamma^*(n)$, together with graded left $\Gamma^*(n)$-module homomorphisms $\sigma_n: \Gamma^*\rightarrow \Gamma^*(n)$ and $\sigma_{n,n+1}: \Gamma^*(n+1) \rightarrow \Gamma^*(n)$ such that 
\begin{enumerate}
\item the composite of the $A$-algebra injection $\Gamma^*(n)\hookrightarrow \Gamma^*$ with the left $A$-module morphism $\sigma_n: \Gamma^* \rightarrow\Gamma^*(n)$ is the identity on $\Gamma^*(n)$,
\item the duality isomorphism $\Gamma^*(n)\stackrel{\cong}{\longrightarrow}\Sigma^{\left| \omega_n\right|}\Gamma^*(n)^*$ of graded left $\Gamma^*(n)$-modules, adjoint to \eqref{pairing 0439}, is in fact an isomorphism of graded left $\Gamma^*$-modules,
\item $\sigma_{n,n+1}\circ\sigma_{n+1} = \sigma_n$ for all $n$, 
\item and the resulting graded left $\Gamma^*$-module homomorphism $\Gamma^* \rightarrow \lim_{n\rightarrow\infty}\Gamma^*(n)$ is an isomorphism.
\end{enumerate}
\end{itemize}
\end{definition}

\begin{definition}[Mitchell coalgebras and orientations]\leavevmode
\begin{itemize}
\item A {\em Mitchell coalgebra} is a graded coalgebra which admits a Mitchell decomposition. 
\item Given a Mitchell coalgebra $\Gamma$, by a {\em $\mathcal{P}$-sequence} we mean a sequence $\Gamma^*(0)\subseteq\Gamma^*(1)\subseteq\dots$ of graded $A$-subalgebras of $\Gamma^*$ as in the definition of a Mitchell decomposition. 
\item Given a Mitchell coalgebra and a choice of $\mathcal{P}$-sequence, by an {\em orientation sequence} we mean a sequence $\omega_0,\omega_1, \omega_2, \dots$ of elements as in the definition of a Mitchell decomposition.
\end{itemize}
\end{definition}

\begin{definition}[The ideal set $\Mit$ of a Mitchell decomposition]
 Given a Mitchell coalgebra $\Gamma$ equipped with a choice of Mitchell decomposition with orientation sequence $\omega_0,\omega_1, \omega_2, \dots$, let $\Mit$ be the set of all intersections of finite collections of homogeneous left ideals of $\Gamma^*$ of the form $\ann_{\ell}(\omega_n)$. That is, a homogeneous left ideal $I$ of $\Gamma^*$ is a member of $\Mit$ if and only if there exists a finite set $N$ of nonnegative integers such that $I = \underset{n\in N}{\bigcap}\ann_{\ell}(\omega_n)$.
\end{definition}

The main theorem of \cite{MR793186}, expressed in the language just introduced, is that, for each prime $p$, the $p$-primary dual Steenrod algebra is a Mitchell coalgebra.

\begin{lemma}\label{mitchell maps are surj}
Let $\Gamma$ be a graded coalgebra over a field $A$. Suppose $\Gamma$ is equipped with a choice of Mitchell decomposition. Then, for each nonnegative integer $n$, the graded left $\Gamma^*(n)$-module map $\sigma_{n,n+1}: \Gamma^*(n+1)\rightarrow \Gamma^*(n)$ is surjective.
\end{lemma}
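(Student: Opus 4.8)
The plan is short: the maps $\sigma_n\colon\Gamma^*\to\Gamma^*(n)$ appearing in a Mitchell decomposition are split surjections, and the compatibility relation $\sigma_{n,n+1}\circ\sigma_{n+1}=\sigma_n$ from condition (3) of Definition \ref{def of mitchell condition} then immediately forces $\sigma_{n,n+1}$ to be surjective as well.

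In more detail, I would first invoke condition (1) of Definition \ref{def of mitchell condition}. Writing $j_n\colon\Gamma^*(n)\hookrightarrow\Gamma^*$ for the $A$-algebra inclusion, that condition asserts $\sigma_n\circ j_n=\id_{\Gamma^*(n)}$; hence $\sigma_n$ is a retraction of $j_n$, and in particular $\sigma_n$ is surjective. The same holds with $n$ replaced by $n+1$, so $\sigma_{n+1}\colon\Gamma^*\to\Gamma^*(n+1)$ is surjective too. I would then apply condition (3), the relation $\sigma_{n,n+1}\circ\sigma_{n+1}=\sigma_n$: given a homogeneous element $y\in\Gamma^*(n)$, set $x\coloneqq\sigma_{n+1}(j_n(y))\in\Gamma^*(n+1)$; then $\sigma_{n,n+1}(x)=\sigma_{n,n+1}(\sigma_{n+1}(j_n(y)))=\sigma_n(j_n(y))=y$. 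Thus every homogeneous element of $\Gamma^*(n)$ lies in the image of $\sigma_{n,n+1}$, and since $\sigma_{n,n+1}$ is a morphism of graded modules it is surjective.

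There is essentially no obstacle here; the only point requiring care is the bookkeeping between the two families of maps — the retractions $\sigma_m$ of $\Gamma^*$ onto $\Gamma^*(m)$, and the transition maps $\sigma_{m,m+1}$ between successive terms — which is why I would spell out the composite $\sigma_{n,n+1}(\sigma_{n+1}(j_n(y)))=y$ explicitly rather than leave it implicit.
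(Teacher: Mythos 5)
Your proof is correct and follows the paper's own argument exactly: both use the splitting $\sigma_n\circ j_n=\id_{\Gamma^*(n)}$ to see that $\sigma_n$ is surjective, and then the factorization $\sigma_n=\sigma_{n,n+1}\circ\sigma_{n+1}$ to conclude that $\sigma_{n,n+1}$ is surjective. You simply spell out the element-chase $\sigma_{n,n+1}(\sigma_{n+1}(j_n(y)))=y$ which the paper leaves implicit.
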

\begin{proof}
The map $\sigma_n: \Gamma^* \rightarrow \Gamma^*(n)$ is a split $\Gamma^*(n)$-module epimorphism, and it factors as the composite $\sigma_{n,n+1}\circ \sigma_{n+1}$. Hence $\sigma_{n,n+1}$ is also surjective.
\end{proof}

Recall from Example \ref{examples of ideal sets 4} that $\grad$ is the filtered ideal set $\{ I_1, I_2, I_3, \dots\}$ in $\Gamma^*$, where $I_n$ is the left ideal (equivalently, two-sided ideal) in $\Gamma^*$ generated by all homogeneous elements of degree $\geq n$.
\begin{prop}\label{grad dist mit}
Suppose $A$ is a field and $\Gamma$ is a graded $A$-coalgebra concentrated in nonpositive degrees. Then we have $\dist\leq \grad$ in the preorder of filtered ideal sets in $\Gamma^*$. 

If $\Gamma$ is a finite-type Mitchell coalgebra, then we also have $\grad \leq\Mit\leq \dist$, and consequently $\grad$ and $\dist$ are equivalent in the preorder of filtered ideal sets in $\Gamma^*$. 
\end{prop}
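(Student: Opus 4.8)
The plan is to prove four comparisons in the preorder of filtered ideal sets in $\Gamma^*$: the inequality $\dist\leq\grad$ (needing only the standing hypothesis that $\Gamma$ is concentrated in nonpositive degrees), and, under the finite-type Mitchell hypothesis, also $\Mit\leq\grad$, $\grad\leq\dist$, and $\grad\leq\Mit$. Together these force $\grad\equiv\Mit\equiv\dist$, and in particular the asserted chain $\grad\leq\Mit\leq\dist$. Recall that a member of $\dist$ is a finite intersection $\bigcap_i\ann_\ell(\gamma_i)$ of left annihilators of nonzero homogeneous elements $\gamma_i$ of $\iota\Gamma$, that a member of $\Mit$ is a finite intersection of ideals $\ann_\ell(\omega_n)$, and that the $I_j$ form a decreasing chain with $I_a\supseteq I_b$ whenever $a\leq b$. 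Three of the comparisons are short degree counts or a faithfulness argument; the last, $\grad\leq\Mit$, is the genuinely Mitchell-theoretic step and the main obstacle.

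For $\dist\leq\grad$ one uses only that $\iota\Gamma=\Gamma$ is concentrated in nonpositive degrees: if $\gamma\in\homog(\iota\Gamma)$ is nonzero of degree $e\leq 0$, then every homogeneous element of $\Gamma^*$ of degree $\geq 1-e$ sends $\gamma$ into a positive degree, hence to $0$, so $\ann_\ell(\gamma)\supseteq I_{1-e}$. Consequently every member $\bigcap_i\ann_\ell(\gamma_i)$ of $\dist$ contains $I_N$ for $N=\max_i(1-e_i)$, which gives $\dist\leq\grad$ with no finite-type or Mitchell hypothesis. The same degree count, applied to $\omega_n$, which lies in the finite-dimensional (hence degreewise bounded) graded $\Gamma^*$-module $\Gamma(n)$ of condition (2), shows $\ann_\ell(\omega_n)\supseteq I_{N_n}$ for some $N_n$; so every member of $\Mit$ contains some $I_N$, i.e.\ $\Mit\leq\grad$.

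For $\grad\leq\dist$ the key is that the adjoint $\Gamma^*$-action on $\iota\Gamma$ is faithful: the counit axiom gives $\varepsilon(f\cdot\gamma)=f(\gamma)$ for homogeneous $f\in\Gamma^*$ and $\gamma\in\Gamma$, so if $f$ has degree $e$ and $f\cdot\gamma=0$ for all $\gamma\in\Gamma_{-e}$ then $f$ vanishes on $\Gamma_{-e}$, hence $f=0$. Fix $j$; by finite-type, for each $e\in\{0,\dots,j-1\}$ the space $\Gamma_{-e}$ has a finite basis, and we let $T$ be the (finite) union of these bases. Then the degree-$e$ part of $\bigcap_{\gamma\in T}\ann_\ell(\gamma)$ vanishes for every $e<j$, so $\bigcap_{\gamma\in T}\ann_\ell(\gamma)\subseteq I_j$; and this intersection is a member of $\dist$. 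Hence each $I_j$ contains a member of $\dist$, i.e.\ $\grad\leq\dist$.

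Finally, for $\grad\leq\Mit$ I must read off from conditions (1)--(4) of a Mitchell decomposition that $\ann_\ell(\omega_n)=\ker\sigma_n$ and that $\bigcap_n\ker\sigma_n=0$. Condition (2) asserts the duality isomorphism $D_n\colon\Gamma^*(n)\stackrel{\cong}{\longrightarrow}\Sigma^{|\omega_n|}\Gamma^*(n)^{*}$ adjoint to the pairing \eqref{pairing 0439} is an isomorphism of graded left $\Gamma^*$-modules; unwinding the adjunction shows $D_n$ carries $1_{\Gamma^*(n)}$ to $\omega_n$ (up to the suspension), so $\ann_\ell(\omega_n)=\ann_\ell(1_{\Gamma^*(n)})$, which is the kernel of the $\Gamma^*$-linear map $\mu_n\colon\Gamma^*\to\Gamma^*(n)$, $f\mapsto f\cdot 1$. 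By conditions (1) and (3) the maps $\sigma_n$ form a compatible cone over the tower $(\Gamma^*(n),\sigma_{n,n+1})$, inducing a map $\Gamma^*\to\lim_n\Gamma^*(n)$, $f\mapsto(\sigma_n(f))_n$, which condition (4) declares to be a $\Gamma^*$-linear isomorphism; being $\Gamma^*$-linear and sending $1$ to $(1)_n$ (as $\sigma_n(1)=1$ by condition (1)), this map must equal $f\mapsto(f\cdot 1)_n=(\mu_n(f))_n$, whence $\sigma_n=\mu_n$, $\ann_\ell(\omega_n)=\ker\sigma_n$, $\bigcap_n\ker\sigma_n=0$, and (again by condition (3)) $\ker\sigma_{n+1}\subseteq\ker\sigma_n$. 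Now fix $j$: each $\Gamma^*_e$ with $e<j$ is finite-dimensional, and the subspaces $\ker\sigma_n\cap\Gamma^*_e$ form a decreasing chain with intersection $0$, hence stabilize at $0$; picking $N$ large enough to kill all of them simultaneously, $\ann_\ell(\omega_N)=\ker\sigma_N$ has trivial degree-$e$ part for every $e<j$, so $\ann_\ell(\omega_N)\subseteq I_j$. Since $\ann_\ell(\omega_N)\in\Mit$, this proves $\grad\leq\Mit$, and assembling the four comparisons completes the argument. The delicate point is this last step: reconciling the several module structures and maps bundled into the definition of a Mitchell decomposition to see that the orientation classes' annihilators are exactly the kernels of the tower maps $\sigma_n$, and that these kernels then visibly shrink to $0$.
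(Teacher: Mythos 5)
Your proof is correct. The overall strategy is the same as the paper's --- the crucial step in both cases is to show that $\ann_\ell(\omega_N)$ has trivial low-degree part for $N$ large, using the degreewise stabilization of the tower $\Gamma^* \cong \lim_m \Gamma^*(m)$ together with the Poincar\'e self-duality identifying $\omega_n$ with $1 \in \Gamma^*(n)$ --- but the skeleton of inequalities and some intermediate lemmas differ, and it is worth recording those differences.

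The paper proves exactly three comparisons: $\dist\leq\grad$, $\Mit\leq\dist$, $\grad\leq\Mit$. For $\Mit\leq\dist$, the paper shows $\ann_\ell(\omega_n) = \ann_\ell(\sigma_n^*(\omega_n))$ using that $\sigma_n^*$ is injective (dual of the split surjection $\sigma_n$), observes that $\sigma_n^*(\omega_n)\in\Gamma$, and concludes that $\ann_\ell(\omega_n)$ is strongly distinguished. For $\grad\leq\Mit$, the paper argues that for $m\geq q_n$ the inclusion $\Gamma^*(m)\hookrightarrow\Gamma^*$ is an isomorphism in degrees $\leq n$, so any degree-$<n$ element of $\ann_\ell(\omega_m)$ lies in $\Gamma^*(m)$, where self-duality forces it to vanish. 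You instead prove four comparisons: $\dist\leq\grad$ and $\Mit\leq\grad$ by degree counts; $\grad\leq\dist$ directly, via the faithfulness of the adjoint action (a clean argument via $\varepsilon(f\cdot\gamma)=f(\gamma)$ that the paper never spells out, though it is implicit); and $\grad\leq\Mit$ by first establishing the reconciliation $\sigma_n=\mu_n$, so that $\ann_\ell(\omega_n)=\ker\sigma_n$, and then using $\bigcap_n\ker\sigma_n=0$ plus finite-type and the Noetherian descending chain condition on each $\Gamma^*_e$. This last identification $\sigma_n=\mu_n$ (from $\Gamma^*$-linearity of the limit comparison map and $\sigma_n(1)=1$) is a genuinely nice observation that is only implicit in the paper's proof; it makes explicit which map $\Gamma^*\to\Gamma^*(m)$ the paper is degreewise analyzing, and why its kernel is exactly $\ann_\ell(\omega_m)$. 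The trade-off is that you rely explicitly on the $\Gamma^*$-linearity of the $\sigma_n$, which is implicit in condition (4) of the Mitchell decomposition but never stated outright in the definition (only $\Gamma^*(n)$-linearity is asserted for the individual $\sigma_n$); the paper's own proof also silently treats $\sigma_n$ as $\Gamma^*$-linear, so this is not a defect of your proof so much as a point where the definition would benefit from being stated more carefully. Both approaches are sound; yours is a bit longer but makes the module-theoretic bookkeeping more transparent.
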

\begin{proof}\leavevmode\begin{itemize}
\item 
Recall from Definition \ref{def of distinguished} that a homogeneous left ideal $I$ of $\Gamma^*$ is said to be {\em strongly distinguished} if the rational $\Gamma^*$-module $\iota(\Gamma)$ contains a graded $\Gamma^*$-submodule isomorphic to a suspension of $\Gamma^*/\Gamma^*I$.
Given a strongly distinguished homogeneous left ideal $I$ of $\Gamma^*$, let $\gamma\in \iota(\Gamma)$ be a homogeneous element whose left annihilator ideal $\ann_{\ell}(\gamma)$ is $I$. Let $n$ be the degree of the homogeneous element $\gamma$. Since $\Gamma$ is coconnective, $n$ must be nonpositive, so every element of $\Gamma^*$ of degree $>n$ annihilates $\gamma$, that is, $I_{n+1}\subseteq I$. 

Now by the definition of a distinguished ideal (Definition \ref{def of distinguished}), every element $J$ of $\dist$ contains an intersection $\cap_{j=1}^m J_j$ of a finite set $J_1, \dots ,J_m$ of strongly distinguished homogeneous left ideals of $\Gamma^*$. For each $j=1, \dots,m$, choose a nonnegative integer $f(j)$ such that $I_{f(j)}\subseteq J_j$. Then we have $J\supseteq \cap_{j=1}^m J_j\supseteq \cap_{j=1}^m I_{f(j)} = I_{\max\{ f(1), \dots,f(m)\}}$, so every element $J$ of $\dist$ contains an element $I_{\max\{ f(1), \dots,f(m)\}}$ of $\grad$.
So $\dist\leq \grad$ in the preorder of filtered ideal sets in $\Gamma^*$.
\item
If $\Gamma$ is a finite-type Mitchell coalgebra and $\omega_0,\omega_1,\omega_2,\dots$ is an orientation sequence for $\Gamma$, then by applying the dual $\sigma_n^*: \Gamma(n)\rightarrow \Gamma$ of the left $\Gamma^*$-module map $\sigma_n: \Gamma^*\rightarrow \Gamma^*(n)$ to $\omega_n\in \Gamma(n)$, we get a sequence of elements $\sigma_0^*(\omega_0),\sigma_1^*(\omega_1),\sigma_2^*(\omega_2),\dots$ of $\Gamma$. Since $\sigma_n$ is surjective, its dual $\sigma_n^*$ is injective, so $\ann_{\ell}(\sigma_n^*(\omega_n)) = \ann_{\ell}(\omega_n)$ for each $n$. Consequently each of the left ideals $\ann_{\ell}(\omega_n)$ of $\Gamma^*$ is strongly distinguished, so the ideal set $\Mit$ is contained in the ideal set $\dist$, and consequently $\Mit\leq \dist$.

To show that $\grad\leq \Mit$, choose some nonnegative integer $n$. We need to show that there exists some member $I$ of $\Mit$ such that $I_n\supseteq I$. That is, we need to find a member $I$ of $\Mit$ which has no nonzero elements of degree $<n$. At this point, we know that $\Gamma^*$ is finite-type, that the maps 
\begin{equation}\label{seq 34094f} \dots \rightarrow \Gamma^*(2) \rightarrow \Gamma^*(1) \rightarrow \Gamma^*(0)\end{equation}
are all grading-preserving and surjective (as a consequence of Lemma \ref{mitchell maps are surj}), and the map $\Gamma^*\rightarrow \lim_m\Gamma^*(m)$ is an isomorphism. Consequently, although the sequence \eqref{seq 34094f} is not necessarily eventually constant, in any single given degree it {\em is} eventually constant. 

In particular, there exists some positive integer $q_n$ such that the map $\Gamma^*\rightarrow \Gamma^*(m)$ is an isomorphism in degrees $\leq n$ for all $m\geq q_n$. By self-duality of $\Gamma^*(m)$, the left annihilator of $\omega_m$ contains no nonzero elements of $\Gamma^*(m)$ itself, so if $m\geq q_n$, $\ann_{\ell}(\omega_m)$ contains no nonzero elements of degree $<n$, as desired.
\end{itemize}
\end{proof}

To a filtered ideal set $S$ in a graded ring $R$, we have the associated local-cohomology-like functor $h_0^S$ given by $\underset{I\in S}{\colim}\ \underline{\hom}_R\left( R/RI,-\right) : \gr\Mod(R) \rightarrow \gr\Ab$. The functor $h_0^S$ comes equipped with a natural transformation $h_0^S \hookrightarrow F$, where $F$ is the forgetful functor $F: \gr\Mod(R)\rightarrow\gr\Ab$. 

Sometimes we are fortunate, and the image of the natural injection of abelian groups $h_0^S(M)\hookrightarrow M$ is actually an $R$-submodule of $M$. That is, when we are lucky, the set of $S$-torsion elements of $M$ is closed under left $R$-scalar multiplication, so that $h^0_S(M) \hookrightarrow H^0_S(M)$ is an isomorphism for all graded $R$-modules $M$. We will call the ideal set $S$ {\em closed} when this condition is satisfied. 

If $R$ is commutative, then every filtered ideal set in $R$ is closed. On the other hand, if $R$ is noncommutative, then $R$ may have some nonclosed filtered ideal sets. One example was given in Remark \ref{remark on ideal sets 2}. 

\begin{lemma}\label{closure closed under equivalence}
Let $R$ be a graded ring.
If two filtered ideal sets $S,S^{\prime}$ in $R$ are equivalent, and if $S$ is closed, then $S^{\prime}$ is also closed.
\end{lemma}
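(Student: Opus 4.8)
The plan is to deduce the lemma directly from the fact, recorded in the remark ``Why equivalence matters'' above, that equivalent filtered ideal sets determine literally the same torsion subgroups. So the first step is to spell out why, for equivalent filtered ideal sets $S$ and $S'$ and any graded $R$-module $M$, the subgroups $h^0_S(M)$ and $h^0_{S'}(M)$ of $M$ coincide. Since $S$ is filtered, $h^0_S(M)$ is the graded subgroup of $M$ generated by the homogeneous elements annihilated by some $I\in S$, and likewise for $S'$. If $m$ is homogeneous with $Im=0$ for some $I\in S$, then, using $S\leq S'$, pick $J\in S'$ with $J\subseteq I$; then $Jm=0$, so $m$ is torsion for a member of $S'$ as well. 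Hence every homogeneous generator of $h^0_S(M)$ lies in $h^0_{S'}(M)$, and symmetrically, using $S'\leq S$, every homogeneous generator of $h^0_{S'}(M)$ lies in $h^0_S(M)$. Therefore $h^0_S(M)=h^0_{S'}(M)$ as graded subgroups of $M$.

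With that equality in hand the conclusion is immediate. By hypothesis $S$ is closed, which by definition means that for every graded $R$-module $M$ the subgroup $h^0_S(M)$ of $M$ is in fact a graded $R$-submodule of $M$, i.e.\ $h^0_S(M)\hookrightarrow H^0_S(M)$ is an isomorphism. Since $h^0_{S'}(M)=h^0_S(M)$ as subgroups of $M$, the subgroup $h^0_{S'}(M)$ is an $R$-submodule of $M$ as well, so $h^0_{S'}(M)\hookrightarrow H^0_{S'}(M)$ is an isomorphism. As $M$ ranges over all graded $R$-modules, this is exactly the statement that $S'$ is closed.

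There is no real obstacle here; this is a formal consequence of the definitions. The one point worth flagging is that closedness is a condition quantified over \emph{all} graded $R$-modules, so the argument must invoke the equality $h^0_S(M)=h^0_{S'}(M)$ for each such $M$ individually, rather than only for $M=R$.
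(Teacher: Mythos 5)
Your proof is correct and follows exactly the route the paper intends: the paper's proof is simply ``Easy consequence of the definitions,'' and the key fact you use (that equivalent filtered ideal sets give literally equal subgroups $h^0_S(M)=h^0_{S'}(M)$) is already recorded in the paper's remark ``Why equivalence matters.'' Nothing to add.
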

\begin{proof}
Easy consequence of the definitions.
\end{proof}

\begin{prop}\label{grad is closed}
Suppose $R$ is a connected graded algebra over a field. Then the filtered ideal set $\grad$ is closed.
\end{prop}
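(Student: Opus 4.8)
The plan is to reduce the statement to a one-line torsion computation, after first pinning down what the ideals $I_n$ look like over a connected ring. Since a connected graded algebra is concentrated in nonnegative degrees, I would begin by observing that $I_n$ --- a priori only the \emph{left} ideal generated by the homogeneous elements of degree $\geq n$ --- in fact coincides with the graded submodule $R_{\geq n} = \bigoplus_{k\geq n} R_k$ of \emph{all} homogeneous elements of degree $\geq n$. It is contained in $R_{\geq n}$ because $R_k\cdot R_{\geq n}\subseteq R_{\geq n}$ for every $k\geq 0$, and it contains $R_{\geq n}$ because any homogeneous $y$ with $|y|\geq n$ is $1\cdot y$. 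The same degree bookkeeping on the right shows $R_{\geq n}\cdot R_k\subseteq R_{\geq n}$, so $I_n = R_{\geq n}$ is a \emph{two-sided} ideal of $R$.

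With this in hand I would finish as follows. Let $M$ be a graded $R$-module. Recall that $h^0_{\grad}(M)$ is the graded subgroup of $M$ generated by the homogeneous elements $m$ annihilated by some $I_n$, and that since the $I_n$ form a descending chain, $h^0_{\grad}(M)$ is the increasing union of the subsets $\{ m\in M : I_n m = 0\}$. For fixed $n$ and any $r\in R$ we have $I_n(rm) = (I_n r)m\subseteq I_n m = 0$, using that $I_n$ is a right ideal; hence each $\{ m\in M : I_n m = 0\}$ is an $R$-submodule of $M$, and therefore so is their union $h^0_{\grad}(M)$. But $H^0_{\grad}(M)$ is by definition the $R$-submodule of $M$ generated by $h^0_{\grad}(M)$, so $h^0_{\grad}(M)\hookrightarrow H^0_{\grad}(M)$ is an isomorphism. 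Since $M$ was arbitrary, $\grad$ is closed.

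There is essentially no obstacle: once the two-sidedness of $I_n$ is observed, this is the very same argument that shows the $I$-torsion of a module over a commutative ring is a submodule. The only bookkeeping points worth double-checking are that a connected graded algebra really is $\mathbb{N}$-graded (so that $\grad$ is defined as in Example \ref{examples of ideal sets 4} and is filtered, hence $h^0_{\grad}$ and $H^0_{\grad}$ make sense), and that it suffices to test closure of $h^0_{\grad}(M)$ under multiplication by homogeneous, hence arbitrary, elements of $R$ --- automatic because $h^0_{\grad}(M)$ is a graded subgroup.
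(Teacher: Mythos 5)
Your proposal is correct, and since the paper's own proof is simply the word ``Elementary,'' your argument supplies exactly the verification being left to the reader. The key step --- that over a connected graded algebra $I_n$ coincides with the span of all homogeneous elements of degree $\geq n$ and is therefore two-sided, so that $I_n(rm) = (I_n r)m \subseteq I_n m = 0$ shows each $I_n$-torsion subgroup (and hence their union $h^0_{\grad}(M)$) is an $R$-submodule --- is precisely what the paper gestures at; indeed the paper's own definition of $\grad$ in Example \ref{examples of ideal sets 4} already remarks parenthetically that $I_n$ is ``equivalently, two-sided,'' which is the only nontrivial observation needed.
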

\begin{proof}
Elementary.
\end{proof}

\begin{theorem}\label{main thm on mitchell coalgebras}
Suppose $A$ is a field and $\Gamma$ is a finite-type Mitchell coalgebra over $A$. Then the following statements are each true:
\begin{enumerate}
\item The filtered ideal sets $\dist$ and $\grad$ in $\Gamma^*$ are equivalent. In particular, $\dist$ is closed.
\item The functor $h^0_{\dist}: \gr\Mod(R)\rightarrow\gr\Ab$ is equivalent to the composite of $H^0_{\dist}: \gr\Mod(R)\rightarrow\gr\Mod(R)$ with the forgetful functor $\gr\Mod(R)\rightarrow\gr\Ab$.
\item For every integer $n$, the distinguished local cohomology $H^n_{\dist}(M)$ is naturally isomorphic, as an abelian group, to $\underset{I\in \dist(\Gamma)}{\colim}\Ext_{\Gamma^*}\left( \Gamma^*/I,M\right)$.
\item For every integer $n$, the distinguished local cohomology $H^n_{\dist}(M)$ is naturally isomorphic, as an abelian group, to $\underset{j\rightarrow\infty}{\colim}\Ext_{\Gamma^*}\left( \Gamma^*/I_j,M\right)$, where $I_j$ is as in the definition of $\grad$, i.e., $I_j$ is the left ideal of $\Gamma^*$ generated by all homogeneous elements of degree $\geq j$.
\end{enumerate}
\end{theorem}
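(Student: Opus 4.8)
The plan is to read off (1) and (2) directly from results already established, and then obtain (3) and (4) by the standard interchange of right derived functors with an exact functor and with filtered colimits; no new ``hard'' input is needed. For part~(1): since $\Gamma$ is a finite-type coalgebra whose dual algebra $\Gamma^*$ is connected, $\Gamma$ is concentrated in nonpositive degrees, so Proposition~\ref{grad dist mit} applies and shows that $\dist$ and $\grad$ are equivalent as filtered ideal sets in $\Gamma^*$; because $\Gamma^*$ is a connected graded algebra over the field $A$, Proposition~\ref{grad is closed} gives that $\grad$ is closed, and Lemma~\ref{closure closed under equivalence} then gives that $\dist$ is closed as well, being equivalent to the closed ideal set $\grad$. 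Part~(2) is then immediate: by definition an ideal set $S$ is closed exactly when, for every graded $\Gamma^*$-module $M$, the canonical monomorphism $h^0_S(M)\hookrightarrow H^0_S(M)$ is an isomorphism; equivalently, writing $U\colon\gr\Mod(\Gamma^*)\to\gr\Ab$ for the exact forgetful functor, exactly when $U\circ H^0_{\dist}$ is naturally isomorphic to $h^0_{\dist}$.

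For part~(3), fix an injective resolution $M\to J^\bullet$ in $\gr\Mod(\Gamma^*)$; by definition both $H^*_{\dist}$ and $h^*_{\dist}$ are computed from it. On one hand $\dist=\overline{\dist(\iota\Gamma)}$ is filtered, so $h^0_{\dist}=\colim_{I\in\dist}\underline{\hom}_{\Gamma^*}(\Gamma^*/\Gamma^*I,-)$, and since filtered colimits are exact in $\gr\Ab$ they commute with the cohomology of the complex $h^0_{\dist}(J^\bullet)$, giving
\[ h^n_{\dist}(M)\ \cong\ \colim_{I\in\dist(\Gamma)}\Ext_{\Gamma^*}\bigl(\Gamma^*/\Gamma^*I,\,M\bigr). \]
On the other hand $U$ is exact, so $R^n(U\circ H^0_{\dist})\cong U\circ R^nH^0_{\dist}=U\circ H^n_{\dist}$, and by part~(2) the left-hand side equals $R^nh^0_{\dist}=h^n_{\dist}$. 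Combining these two facts identifies $H^n_{\dist}(M)$, as a graded abelian group, with $\colim_{I\in\dist(\Gamma)}\Ext_{\Gamma^*}(\Gamma^*/\Gamma^*I,M)$.

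Part~(4) follows the same pattern. The ideal set $\grad=\{I_1,I_2,\dots\}$ is totally ordered by reverse inclusion, hence filtered, so the computation used in part~(3) also gives $h^n_{\grad}(M)\cong\colim_{j\to\infty}\Ext_{\Gamma^*}(\Gamma^*/I_j,M)$. By part~(1) the filtered ideal sets $\dist$ and $\grad$ are equivalent, and an equivalence of filtered ideal sets identifies the degree-zero torsion subgroups of every module and hence induces natural isomorphisms $h^n_{\dist}\cong h^n_{\grad}$ for all $n$; chaining this with part~(3) yields the claim.

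The genuinely substantive content of this theorem lives in Proposition~\ref{grad dist mit} (which in turn rests on Mitchell's self-duality theorem for the dual Steenrod algebra) and Proposition~\ref{grad is closed}; granting those, everything above is formal bookkeeping. The one step that repays a little care is the identification $h^n_{\dist}(M)\cong U\bigl(H^n_{\dist}(M)\bigr)$ used in part~(3): it holds because the \emph{same} injective resolution of $M$ in $\gr\Mod(\Gamma^*)$ computes both derived functors and $U$ is exact, so that the module-valued functor $H^n_{\dist}$ and the abelian-group-valued functor $h^n_{\dist}$ agree on underlying graded abelian groups, and not merely in cohomological degree zero.
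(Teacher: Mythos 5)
Your proposal is correct and follows essentially the same route as the paper: parts~(1) and~(2) are read off from Proposition~\ref{grad dist mit}, Proposition~\ref{grad is closed}, and Lemma~\ref{closure closed under equivalence}, and parts~(3) and~(4) are the standard derived-functor bookkeeping using exactness of filtered colimits. If anything your write-up is slightly more careful than the paper's in part~(3), where you spell out explicitly that the exact forgetful functor $U$ commutes with right derived functors (so $U\circ H^n_{\dist}\cong R^n(U\circ H^0_{\dist})\cong h^n_{\dist}$), rather than passing from $R^nH^0_{\dist}$ to $R^nh^0_{\dist}$ tacitly as the paper does; this is a genuine clarification, not a different method.
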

\begin{proof}\leavevmode
\begin{enumerate}
\item Immediate from Proposition \ref{grad dist mit}.
\item 
By Proposition \ref{grad is closed}, $\grad$ is closed.
By the previous part of this theorem, $\dist$ and $\grad$ are equivalent. By Lemma \ref{closure closed under equivalence}, $\dist$ is consequently also closed, so $h^0_{\dist}\cong H^0_{\dist}$.
\item Since $\dist$ is filtered, the colimit $\underset{I\in \dist}{\colim}$ is exact, so we have 
\begin{align*}
 H^n_{\dist}(M) 
  &\cong R^nH^0_{\dist}(M) \\
  &\cong R^nh^0_{\dist}(M) \\
  &\cong R^n\left(\underset{I\in \dist}{\colim}\ \underline{\hom}_{\Gamma^*}\left( \Gamma^*/\Gamma^*I,-\right)\right)(M) \\
  &\cong \underset{I\in \dist}{\colim} \Ext_R^n\left( \Gamma^*/\Gamma^*I,M\right).
\end{align*}
\item Immediate from the preceding part of this theorem together with the equivalence of the filtered ideal sets $\grad$ and $\dist$.
\end{enumerate}
\end{proof}

See Remark \ref{remark on h0 and H0} for some discussion of why Theorem \ref{main thm on mitchell coalgebras} matters.

\begin{corollary}\label{main cor 10}
Let $p$ be a prime number, and let $\Gamma^*$ be the mod $p$ Steenrod algebra. Let $M$ be a graded $\Gamma^*$-module. Then, for all integers $n$, the distinguished local cohomology group $H^n_{\dist}(M)$ is isomorphic to the graded local cohomology group $\underset{j\rightarrow\infty}{\colim} \Ext_{\Gamma^*}^n\left( \Gamma^*/I_j,M\right)$, where $I_j$ is the ideal of the Steenrod algebra generated by all homogeneous elements of degree $\geq j$. 
\end{corollary}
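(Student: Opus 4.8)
The plan is to deduce this immediately from Theorem \ref{main thm on mitchell coalgebras}, which already establishes exactly the isomorphism $H^n_{\dist}(M) \cong \underset{j\rightarrow\infty}{\colim}\Ext^n_{\Gamma^*}(\Gamma^*/I_j,M)$ for an arbitrary finite-type Mitchell coalgebra over a field. So the whole task reduces to checking that the mod $p$ dual Steenrod algebra $\Gamma = (\Gamma^*)^*$ satisfies those hypotheses: that it is a coalgebra over a field, that it is finite-type, and that it admits a Mitchell decomposition in the sense of Definition \ref{def of mitchell condition}.

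First I would observe that here $A = \mathbb{F}_p$ is a field, that the Steenrod algebra $\Gamma^*$ is finite-dimensional in each degree (via the admissible monomials, or the Milnor basis), so that $\Gamma$ is finite-type, and that our grading conventions make $\Gamma^*$ connected. Next I would invoke the main theorem of \cite{MR793186}: interpreting Mitchell's 1985 results in the language of Definition \ref{def of mitchell condition}, with $\Gamma(n)$ the quotient dual coalgebra dual to $A(n)$ and $\omega_n$ the corresponding top-dimensional class, Mitchell's self-duality and compatibility statements for the tower $A(0)\subseteq A(1)\subseteq\dots$ provide precisely the data of a Mitchell decomposition of $\Gamma$. (This identification was already recorded in the discussion following the definition of the ideal set $\Mit$.) In particular Mitchell supplies the compatible left $\Gamma^*$-linear retractions $\sigma_n:\Gamma^*\to\Gamma^*(n)$ and $\sigma_{n,n+1}:\Gamma^*(n+1)\to\Gamma^*(n)$, the $\Gamma^*$-linear Poincar\'e duality isomorphisms $\Gamma^*(n)\cong \Sigma^{|\omega_n|}\Gamma^*(n)^*$, and the isomorphism $\Gamma^*\xrightarrow{\cong}\lim_n\Gamma^*(n)$ required by the definition.

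With these two observations in hand, $\Gamma$ is a finite-type Mitchell coalgebra over the field $A=\mathbb{F}_p$, so part (4) of Theorem \ref{main thm on mitchell coalgebras} applies verbatim and yields the claimed natural isomorphism, where $I_j$ is the ideal of $\Gamma^*$ generated by all homogeneous elements of degree $\geq j$. I do not expect any genuine obstacle here: the entire content of the corollary is carried by Theorem \ref{main thm on mitchell coalgebras} and Mitchell's theorem, and the remaining checks (field, finite-type, connectedness) are routine. The only place where care is warranted is in confirming that Mitchell's concrete constructions for $A_*$ really do fit the somewhat elaborate bookkeeping of Definition \ref{def of mitchell condition}, but this is exactly what \cite{MR793186} provides, and it is not something that needs to be re-derived in the proof of this corollary.
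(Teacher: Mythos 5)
Your proposal is correct and is exactly the paper's (implicit) argument: the corollary is stated without a separate proof because it is an immediate consequence of part (4) of Theorem \ref{main thm on mitchell coalgebras} together with the fact, recorded just after the definition of the ideal set $\Mit$, that Mitchell's theorem in \cite{MR793186} establishes the dual Steenrod algebra to be a finite-type Mitchell coalgebra over $\mathbb{F}_p$. The extra detail you supply about matching Mitchell's tower $A(0)\subseteq A(1)\subseteq\dots$, orientation classes, and retractions to the bookkeeping of Definition \ref{def of mitchell condition} is the same identification the paper invokes, just spelled out more explicitly.
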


\begin{corollary}\label{derived products are local cohomology}
Let $p$ be a prime, let $\Gamma$ be the mod $p$ dual Steenrod algebra, and let $\{ M_i: i\in I\}$ be a set of bounded-above\footnote{To be clear, the comodules $M_i$ do not need to be {\em uniformly} bounded above.} graded $\Gamma$-comodules. Then the $n$th derived functor $R^n\prod^{\Gamma}_{i\in I} M_i$ of product in the category of graded $\Gamma$-comodules is isomorphic, as an abelian group, to the graded local cohomology $\underset{j\rightarrow\infty}{\colim} \Ext_{\Gamma^*}^n\left( \Gamma^*/I_j,\prod_i M_i\right)$. Here $I_j$ is as in Corollary \ref{main cor 10}, and $\prod_i M_i$ is the Cartesian product of the graded $\Gamma^*$-modules $M_i$ with the adjoint action of $\Gamma^*$.
\end{corollary}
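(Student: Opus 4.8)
The plan is to simply concatenate the two main results already established, so the proof is short. First I would note that all standing hypotheses are met: by Mitchell's theorem \cite{MR793186} (recalled above), the mod $p$ dual Steenrod algebra $\Gamma$ is a finite-type Mitchell coalgebra over the field $A=\mathbb{F}_p$, and its dual algebra $\Gamma^*$, the mod $p$ Steenrod algebra, is finite-type and connected. Hence both Theorem \ref{main thm 1} and Corollary \ref{main cor 10} apply to $\Gamma$.

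Second, I would apply part (6) of Theorem \ref{main thm 1} to the family $\{M_i:i\in I\}$ of bounded-above graded $\Gamma$-comodules, which yields for each nonnegative integer $n$ an isomorphism of graded $\Gamma^*$-modules
\[ \iota\!\left( R^n\prod^{\Gamma}_{i\in I} M_i\right) \cong H^n_{\dist}\!\left(\prod_{i\in I}\iota(M_i)\right). \]
Since $\iota$ is the covariant embedding, which on underlying graded abelian groups is the identity, the left-hand side agrees, as an abelian group, with $R^n\prod^{\Gamma}_{i\in I}M_i$. Third, I would apply Corollary \ref{main cor 10} to the single graded $\Gamma^*$-module $M\coloneqq\prod_{i\in I}\iota(M_i)$ — observing that Corollary \ref{main cor 10} carries no boundedness hypothesis, so it applies to this product regardless of the fact that the product of bounded-above modules need not be bounded above — to obtain
\[ H^n_{\dist}\!\left(\prod_{i\in I}\iota(M_i)\right) \cong \underset{j\rightarrow\infty}{\colim}\ \Ext^n_{\Gamma^*}\!\left(\Gamma^*/I_j,\ \prod_{i\in I}\iota(M_i)\right). \]
Composing the two displayed isomorphisms, and recalling that the symbol $\prod_i M_i$ in the statement denotes exactly the Cartesian product $\prod_{i\in I}\iota(M_i)$ of the adjoint $\Gamma^*$-modules, completes the argument.

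I do not expect any real obstacle here: the entire content of this corollary is packed into Theorem \ref{main thm 1}(6) and Corollary \ref{main cor 10}, which in turn rest on the harder results of the paper (Theorem \ref{main thm on mitchell coalgebras} and, underneath it, Mitchell's duality theorem). The only point needing care is bookkeeping — keeping the ordinary product $\prod$ appearing inside the local cohomology strictly distinct from the comodule product $\prod^{\Gamma}$ whose derived functors are being computed, and invoking the bounded-above hypothesis only where it is genuinely needed, namely in applying Theorem \ref{main thm 1}(6), and not in applying Corollary \ref{main cor 10}.
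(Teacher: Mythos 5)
Your proof is correct and is exactly the argument the paper intends (the corollary carries no explicit proof in the paper, but the introduction and the surrounding text make clear it is obtained by chaining Theorem \ref{main thm 1}(6) with Corollary \ref{main cor 10}). The two points you flag as needing care — that the bounded-above hypothesis is used only in invoking Theorem \ref{main thm 1}(6) and not in Corollary \ref{main cor 10}, and that $\prod_i M_i$ in the statement is the ordinary Cartesian product of adjoint modules rather than the comodule product — are precisely the right points to flag, and you handle both correctly.
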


As far as the author knows, the original reference on graded local cohomology is \cite{MR494707}, which only considered commutative Noetherian rings. In the noncommutative case, \cite{MR1428799} seems to be the first reference, although it still assumes a Noetherian condition on the graded noncommutative ring. Unfortunately we have not found any results in the literature on local cohomology which appear to be useful in the case where $R$ is a Steenrod algebra, since as far as we have been able to determine, none of the existing literature on local cohomology treats rings which are both noncommutative and non-Noetherian. With the topological applications provided by Corollary \ref{derived products are local cohomology} and the Sadofsky and Hovey-Sadofsky spectral sequences, there is good reason to generalize the existing computational tools for local cohomology to the case of graded noncommutative non-Noetherian rings. That task lies beyond the scope of this paper, though.

\section{Bounds on distinguished local-cohomological dimension.}
\label{Bounds on distinguished...}


The most fundamental and well-known vanishing theorem in classical local cohomology is this: if $I$ is an ideal in a Noetherian commutative ring, then the classical local cohomology groups $H^n_I(M)$ vanish for all $R$-modules $M$ whenever $n$ is greater than the least number of generators for $I$. One wants a generalization of this result which applies to distinguished local cohomology. However, $h^*_{\dist}$ is given by a colimit of $\Ext$ groups associated to a set $S$ of ideals which is not generally the sequence of powers of any single ideal $I$, so it is not clear whether one ought to expect $h^n_{\dist}(M)$ to vanish for all $n$ larger than some particular integer. It is even less clear what to expect from $H^n_{\dist}$ except when the relevant coalgebra $\Gamma$ is either co-commutative, or finite-type and Mitchell, so that $H^n_{\dist}$ agrees with $h^n_{\dist}$.

The main result of this section is Theorem \ref{main thm 4}, which establishes that, for a certain class of graded coalgebras $\Gamma$ (including those whose dual is a $\mathcal{P}$-algebra, so in particular, including the dual Steenrod algebras), there is no such vanishing theorem for distinguished local cohomology. As a consequence we get that the graded comodule category over such a coalgebra fails to be $AB4^*\mhyphen (n)$ for any $n$ whatsoever.

\begin{theorem}\label{main thm 4}
Suppose that $A$ is a field, and that $\Gamma^*$ is finite-type and connected.
Suppose furthermore that every bounded-below free graded $\Gamma^*$-module is injective in $\gr\Mod(\Gamma^*)$.
Then the following conditions are equivalent:
\begin{enumerate}
\item The category of graded $\Gamma^*$-modules has distinguished local cohomological dimension zero. That is, $H^m_{\dist}(M)$ vanishes for all positive $m$ and all graded $\Gamma^*$-modules $M$.
\item The category of graded $\Gamma^*$-modules has finite distinguished local cohomological dimension. That is, there exists some integer $n$ such that $H^m_{\dist}(M)$ vanishes for all $m>n$ and all graded $\Gamma^*$-modules $M$.
\item The category of bounded-above graded $\Gamma$-comodules satisfies axiom $AB4^*\mhyphen (n)$ for some nonnegative integer $n$. 
That is, there exists some integer $n$ such that $R^m\prod^{\Gamma}_i\left( \left\{ M_i\right\}\right)$ vanishes for all $m>n$, all countable sets $I$, and all sets $\{ M_i:i\in I\}$ of bounded-above graded $\Gamma$-comodules.
\item The category of bounded-above graded $\Gamma$-comodules satisfies Grothendieck's property $AB4^*$. That is, countable products are exact on bounded-above graded $\Gamma$-comodules.
\end{enumerate}
\end{theorem}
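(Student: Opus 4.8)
The plan is to prove the cycle of implications $(1)\Rightarrow(4)\Rightarrow(3)\Rightarrow(2)\Rightarrow(1)$. Two of these links are immediate: $(4)\Rightarrow(3)$ is just the case $n=0$, and $(1)\Rightarrow(4)$ follows from part (6) of Theorem \ref{main thm 1}, which for any countable family of bounded-above comodules $\{M_i\}$ gives a natural isomorphism $\iota\bigl(R^m\prod^{\Gamma}_i M_i\bigr)\cong H^m_{\dist}\bigl(\prod_i \iota M_i\bigr)$; if $H^m_{\dist}$ vanishes for all $m>0$, then faithfulness of $\iota$ forces $R^m\prod^{\Gamma}_i M_i=0$ for all $m>0$, which is exactly $AB4^*$ for bounded-above comodules.

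For $(3)\Rightarrow(2)$, I would exploit Corollary \ref{modules are limits of comodules}: an arbitrary graded $\Gamma^*$-module $M$ is the limit of the Mittag--Leffler tower $\bigl(\iota(\comod_n M)\bigr)_n$ of bounded-above rational modules. Since the sub-tower $(\conn_n M)_n$ is Mittag--Leffler and $\lim^1$ of the constant tower vanishes, the long exact sequence relating $\lim$ and $\lim^1$ for the evident short exact sequence of towers shows that $\bigl(\iota(\comod_n M)\bigr)_n$ also has vanishing $\lim^1$ in $\gr\Mod(\Gamma^*)$; the standard presentation then yields a short exact sequence $0\to M\to P\xrightarrow{\,1-s\,}P\to 0$ of graded $\Gamma^*$-modules, with $P=\prod_n \iota(\comod_n M)$ the Cartesian product of bounded-above comodules and $s$ the shift. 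By Theorem \ref{main thm 1}(6), $H^m_{\dist}(P)\cong\iota\bigl(R^m\prod^{\Gamma}_n \comod_n M\bigr)$, which vanishes for $m>n$ by hypothesis $(3)$. Feeding the short exact sequence into the long exact sequence of $H^*_{\dist}$ then forces $H^j_{\dist}(M)=0$ for all $j>n+1$; since $M$ was arbitrary, this is $(2)$ with bound $n+1$.

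The main obstacle is $(2)\Rightarrow(1)$, and this is where the hypothesis that bounded-below free graded $\Gamma^*$-modules are injective is indispensable. First I would reduce to bounded-below modules: for $m>0$ the natural map $H^m_{\dist}(\conn_0 M)\to H^m_{\dist}(M)$ of Theorem \ref{structure thm} is an isomorphism, and $\conn_0 M$ is $0$-connective, hence bounded below. For a bounded-below module $N$, connectedness of $\Gamma^*$ lets me choose a surjection $J\twoheadrightarrow N$ from a bounded-below free module---injective by hypothesis---with bounded-below kernel $K$. Because the right derived functors of the left-exact functor $H^0_{\dist}$ (Corollary \ref{H0 is left exact}) vanish on injectives in positive degrees, the long exact sequence collapses to a dimension-shift isomorphism $H^m_{\dist}(N)\cong H^{m+1}_{\dist}(K)$ for every $m\geq 1$. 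Iterating this $\ell$ times produces bounded-below modules $K^{(\ell)}$ with $H^m_{\dist}(N)\cong H^{m+\ell}_{\dist}(K^{(\ell)})$; choosing $\ell$ so that $m+\ell>n$ and invoking hypothesis $(2)$ gives $H^m_{\dist}(N)=0$. Combined with the reduction to bounded-below modules, this shows $H^m_{\dist}\equiv 0$ for all $m>0$, which is $(1)$. Apart from the three named results (Theorem \ref{main thm 1}(6), Corollary \ref{modules are limits of comodules}, Theorem \ref{structure thm}), the standing free-injectivity hypothesis, and routine long-exact-sequence chases, I expect nothing further is needed; the only point requiring a little care is the $\lim^1$-vanishing for the tower $\bigl(\iota(\comod_n M)\bigr)_n$ used in $(3)\Rightarrow(2)$.
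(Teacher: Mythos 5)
Your proposal is correct and, modulo reorganization of the cycle of implications, is essentially the paper's own proof: the paper establishes $(1)\Leftrightarrow(2)$ by the same dimension-shift on $\conn_0(M)$ against bounded-below free (hence injective) covers, proves $(1)\Rightarrow(4)$ and $(2)\Rightarrow(3)$ as direct special cases of Theorem~\ref{main thm 1}(6), and closes the loop via $(3)\Rightarrow(2)$ using exactly the Mittag--Leffler presentation $0\to M\to\prod_i N_i\to\prod_i N_i\to 0$ from Corollary~\ref{modules are limits of comodules} together with the long exact sequence of $H^*_{\dist}$. Your handling of the $\lim^1$-vanishing via the long exact sequence of towers is a valid, if slightly more roundabout, route to the same conclusion the paper reaches by noting that the connecting maps $\iota(\comod_{n+1}M)\to\iota(\comod_n M)$ are already surjective.
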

\begin{proof}\leavevmode
\begin{description}
\item[(1) implies (2)] Immediate.
\item[(2) implies (1)] Let $M$ be a graded $\Gamma^*$-module. By Theorem \ref{structure thm}, $M$ has the same distinguished local cohomology in positive degrees as the graded $\Gamma^*$-module $\conn_0(M)$ of $M$ generated by all homogeneous elements of nonnegative degree. Since $\conn_0(M)$ is trivial in negative degrees, there exists a connective free graded $\Gamma^*$-module $F$ and an epimorphism $\epsilon: F \rightarrow \conn_0(M)$. Since $F$ is injective, 
we have $H^{i+1}_{\dist}(\ker \epsilon)\cong H^{i}_{\dist}(\conn_0(M))\cong H^{i}_{\dist}(M)$ for all $i\geq 1$. So if $H^i_{\dist}$ is nonzero on some graded $\Gamma^*$-module for some positive $i$, then $H^{i+1}_{\dist}$ is also nonzero on some graded $\Gamma^*$-module. Consequently, under the stated hypotheses, the only way to have a finite cohomological bound on distinguished local cohomology is for that bound to be zero.
\item[(2) implies (3)] Special case of Theorem \ref{main thm 1}.
\item[(1) implies (4)] Special case of Theorem \ref{main thm 1}.
\item[(4) implies (3)] Immediate, since $AB4^*$ is the same condition as $AB4^*\mhyphen (0)$.
\item[(3) implies (2)] Suppose that the category of bounded-above graded \linebreak $\Gamma$-comodules satisfies axiom $AB4^*\mhyphen (n)$ for some nonnegative integer $n$. Suppose that $M$ is a graded $\Gamma^*$-module. By Corollary \ref{modules are limits of comodules}, $M$ is isomorphic to the limit of a Mittag-Leffler sequence $\dots \rightarrow N_2 \rightarrow N_1 \rightarrow N_0$ of rational, bounded-above graded $\Gamma^*$-modules. Consequently we have a short exact sequence of graded $\Gamma^*$-modules
\begin{equation}\label{ses 30-204949} 0 \rightarrow M \rightarrow \prod_{i\in I} N_i \stackrel{\id - T}{\longrightarrow} \prod_{i\in I} N_i \rightarrow 0\end{equation} with each of the modules $N_0,N_1,N_2,\dots$ bounded-above.
Since each of the $N_i$ are bounded-above, Theorem \ref{main thm 1} gives us an isomorphism of abelian groups \begin{align*} H^*_{\dist}\left( \prod_{i\in I} N_i\right) &\cong R^*\prod^{\Gamma}_i\left( \{ N_i :i\in\mathbb{N}\}\right),\end{align*} so the $AB4^*\mhyphen (n)$ assumption on the comodule category gives us that \linebreak $H^j_{\dist}\left( \prod_{i\in I} N_i\right)$ vanishes for all $j>n$.
The long exact sequence obtained by applying $H^*_{\dist}$ to \eqref{ses 30-204949} consequently gives us that $H^j_{\dist}(M)$ vanishes for all $j>n+1$.
That is, the category of graded $\Gamma^*$-modules has distinguished local cohomological dimension at most $n+1$.
\end{description}
\end{proof}

\begin{corollary}\label{main cor 4a}
Suppose that $A$ is a field, and suppose that the $A$-algebra $\Gamma^*$ is a finite-type $\mathcal{P}$-algebra. Then {\em exactly one} of the two following statements is true:
\begin{enumerate}
\item The category of bounded-above graded $\Gamma$-comodules satisfies condition $AB4^*$. That is, products are exact in the category of graded $\Gamma$-comodules.
\item For each integer $n$, the category of bounded-above graded $\Gamma$-comodules fails to satisfy condition $AB4^*\mhyphen (n)$. That is, for each integer $n$, there exists a countably infinite set $\{ M_0, M_1, M_2, \dots\}$ of graded $\Gamma$-comodules such that $R^m\prod^{\Gamma}_i \left( \{ M_i\}\right)$ is nonzero for some $m>n$.
\end{enumerate}
\end{corollary}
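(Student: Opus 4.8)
The plan is to reduce the statement entirely to Theorem~\ref{main thm 4}, whose hypotheses I verify for a finite-type $\mathcal{P}$-algebra, and then to observe that the two alternatives in the corollary are, respectively, condition~(4) of that theorem and the negation of condition~(3). First I would note that, since $\Gamma^*$ is finite-type, its $A$-linear dual $\Gamma=(\Gamma^*)^*$ is a finite-type graded $A$-coalgebra whose dual algebra is again $\Gamma^*$, so the statement is meaningful; and since, by Definition~\ref{def of p-alg}, $\Gamma^*$ is a union $\bigcup_n B(n)$ of Poincar\'e algebras, each of which is connected, the algebra $\Gamma^*$ is connected. Thus $A$ is a field and $\Gamma^*$ is finite-type and connected, which gives the first hypotheses of Theorem~\ref{main thm 4}.

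The remaining hypothesis---that every bounded-below free graded $\Gamma^*$-module be injective in $\gr\Mod(\Gamma^*)$---is the only nontrivial point, and it is exactly where the non-formal input of Margolis's module theory for $\mathcal{P}$-algebras (reviewed in Section~\ref{Review of Margolis...}) enters. Margolis's self-injectivity theorem (Theorem~12 of section 13.3 of \cite{MR738973}) gives that $\Gamma^*$, and hence each suspension $\Sigma^d\Gamma^*$, is injective in $\gr\Mod(\Gamma^*)$; a bounded-below free module is a coproduct of such suspensions with a uniform lower bound on the degrees, and Margolis's results (together with the graded form of Megibben's theorem, $\Gamma^*$ being of countable total dimension in the degrees under consideration) show that such a coproduct remains injective. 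This is precisely the claim, promised in the introduction, that $\mathcal{P}$-algebras satisfy the hypotheses of Theorem~\ref{main thm 4}, and I expect the verification of this coproduct-of-injectives statement---rather than the logical bookkeeping below---to be the main obstacle.

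With all four hypotheses of Theorem~\ref{main thm 4} in hand, that theorem yields the equivalence of its conditions (1)--(4), and in particular the equivalence of condition~(3) (``bounded-above graded $\Gamma$-comodules satisfy $AB4^*\mhyphen(n)$ for some $n$'') with condition~(4) (``bounded-above graded $\Gamma$-comodules satisfy $AB4^*$''). Statement~(1) of the present corollary is exactly condition~(4), and statement~(2) is exactly the negation of condition~(3): it asserts that for every integer $n$ there is a countably infinite family of bounded-above $\Gamma$-comodules on which $R^m\prod^{\Gamma}_i$ is nonzero for some $m>n$ (and the negation of (3) forces such a witness to be countably infinite, since finite products of comodules are exact). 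Writing $P$ for statement~(1) and $Q$ for statement~(2), we then have $P\Leftrightarrow\text{(4)}\Leftrightarrow\text{(3)}\Leftrightarrow\neg Q$, so exactly one of $P$ and $Q$ holds; in particular the two are genuinely incompatible, as $AB4^*$ is the $n=0$ instance of $AB4^*\mhyphen(n)$, so $P$ implies condition~(3) and hence rules out $Q$. This completes the argument.
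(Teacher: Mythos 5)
Your proposal is correct and takes essentially the same route as the paper: verify the hypotheses of Theorem~\ref{main thm 4} and then read statements~(1) and~(2) of the corollary off as condition~(4) and the negation of condition~(3). The logical bookkeeping---including the observation that the witness family in statement~(2) must be infinite because finite products in an abelian category are biproducts and hence exact---is fine.

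The one place you overcomplicate things, and in fact misidentify where the work lies, is the verification that bounded-below free graded $\Gamma^*$-modules are injective. You flag this as ``the main obstacle'' and propose to prove it by combining self-injectivity of $\Gamma^*$ with a graded version of Megibben's theorem on coproducts of countable injectives. But the very result you cite---Theorem~12 of section~13.3 of Margolis, reproduced in the appendix as Theorem~\ref{thm from margolis 2}---already asserts, for any \emph{bounded-below} graded module $M$ over a $\mathcal{P}$-algebra, the equivalence of ``$M$ is free'' and ``$M$ is injective.'' So the coproduct-of-injectives step is not a gap to be filled; it is already a verbatim consequence of the theorem you named, and the Megibben-style argument is unnecessary machinery. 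The paper's proof simply invokes Theorem~\ref{thm from margolis 2} in one line. (Incidentally, this also means you do not separately need the observation that $\Gamma^*$ is connected or that $\Gamma$ is finite-type: those are also hypotheses of Theorem~\ref{main thm 4}, and they do follow as you say, but the only nonformal input is Margolis's theorem, used at its full strength rather than a fragment of it.)
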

\begin{proof}
By Theorem \ref{thm from margolis 2} from the appendix on Margolis' results on $\mathcal{P}$-algebras, every bounded-below free graded $\Gamma^*$-module is injective. Consequently the hypotheses of Theorem \ref{main thm 4} are satisfied.
\end{proof}

\begin{corollary}\label{no ab4n for dual steenrod alg}
Let $p$ be a prime number, and let $\Gamma$ denote the mod $p$ dual Steenrod algebra. Then, for each integer $n$, the category of bounded-above graded $\Gamma$-comodules fails to satisfy condition $AB4^*\mhyphen (n)$.
That is, there exists a countably infinite set $M_0, M_1, M_2, \dots$ of bounded-above graded $\Gamma$-comodules such that the derived product $R^m\prod^{\Gamma}_i\left(\left\{ M_i: i\in \mathbb{N}\right\}\right)$ is nonzero for some $m>n$.
\end{corollary}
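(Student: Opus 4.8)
The plan is to obtain this immediately from \Cref{main cor 4a}. That corollary applies here because the mod $p$ Steenrod algebra $\Gamma^*$ is a finite-type $\mathcal{P}$-algebra (Proposition 7 of section 15.1 of \cite{MR738973}), and it asserts that \emph{exactly one} of the following holds: either the category of bounded-above graded $\Gamma$-comodules satisfies $AB4^*$, or, for every integer $n$, that category fails to satisfy $AB4^*\mhyphen(n)$. The conclusion sought is precisely the second alternative, so the whole task reduces to ruling out the first: I must exhibit a countably infinite set of bounded-above graded $\Gamma$-comodules on which some higher derived product $R^m\prod^{\Gamma}$ is nonzero.

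To produce such a family I would reuse the explicit computation carried out in the second illustrative example of the introduction. For each $i\geq 0$ let $\Gamma^{\geq -i}\subseteq\Gamma$ be the finite-dimensional (hence bounded-above) graded $\Gamma$-subcomodule of elements of cohomological degree $\geq -i$, and consider the short exact sequence of graded $\Gamma^*$-modules
\[ 0 \rightarrow \prod_{i\geq 0}\Sigma^i\iota(\Gamma^{\geq -i}) \rightarrow \prod_{i\geq 0}\Sigma^i\iota\Gamma \rightarrow \prod_{i\geq 0}\Sigma^i\iota(\Gamma/\Gamma^{\geq -i}) \rightarrow 0. \]
Since $\Gamma^*$ is finite-type and connected, $\iota\Gamma\cong\Gamma^{**}$ is injective in $\gr\Mod(\Gamma^*)$, and products of injectives are again injective there, so the middle term is injective and its higher $H^*_{\dist}$ vanishes. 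Feeding this short exact sequence into the long exact sequence for $H^*_{\dist}$, and invoking part (6) of \Cref{main thm 1} (which identifies $H^1_{\dist}$ of the left-hand term with $\iota R^1\prod^{\Gamma}_{i\geq 0}\Sigma^i\Gamma^{\geq -i}$) together with \Cref{main cor 10} (which presents $H^1_{\dist}$ as the cokernel/colimit of $\Ext$-groups, yielding the concrete quotient description of \eqref{iso 0039558}), one identifies $\iota R^1\prod^{\Gamma}_{i\geq 0}\Sigma^i\Gamma^{\geq -i}$, in each negative degree $n$, with $\{(\gamma_0,\gamma_1,\dots):\gamma_i\in\Gamma^{n-i}\}/Tors^n$. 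It then remains only to observe that this group is nonzero for the dual Steenrod algebra.

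The one step requiring a little care — the closest thing to an obstacle — is this concrete nonvanishing: one must exhibit a sequence $(\gamma_0,\gamma_1,\gamma_2,\dots)$ of homogeneous elements of $\Gamma$ with strictly decreasing degrees such that, for every integer $m$, some homogeneous element of $\Gamma^*$ of degree $\geq m$ acts nontrivially on some $\gamma_i$; such a sequence is then not in $Tors$ and so represents a nonzero class. The Milnor generators $(\xi_1,\xi_2,\xi_3,\dots)$ (respectively $(\zeta_1,\zeta_2,\dots)$ at the prime $2$) do the job: their cohomological degrees $-2(p^i-1)$ are strictly decreasing and tend to $-\infty$, and the Milnor basis element dual to $\xi_j$ acts nontrivially on $\xi_j$ while having degree $2(p^j-1)$, which is unbounded in $j$. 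This exhibits a nonzero $R^1\prod^{\Gamma}$ on a countable family of bounded-above graded $\Gamma$-comodules, so the first alternative of \Cref{main cor 4a} is excluded, and \Cref{main cor 4a} then forces the second: for each integer $n$ there is a countably infinite set $M_0,M_1,M_2,\dots$ of bounded-above graded $\Gamma$-comodules with $R^m\prod^{\Gamma}_i(\{M_i:i\in\mathbb{N}\})$ nonzero for some $m>n$, which is exactly the assertion of the corollary.
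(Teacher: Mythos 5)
Your argument is correct, and it shares the paper's overall skeleton: both proofs reduce the claim to the dichotomy of Corollary \ref{main cor 4a} (applicable because the mod $p$ Steenrod algebra is a finite-type $\mathcal{P}$-algebra) and then rule out the $AB4^*$ alternative. Where you depart is in how that alternative is excluded. The paper's own proof is topological: it argues that exact bounded-above comodule products would force the Hovey-Sadofsky spectral sequence to collapse at $E_2$ for every countable family of bounded-below $H\mathbb{F}_p$-nilpotently complete spectra, which would imply that mod $p$ homology commutes with sequential homotopy limits of such spectra---well known to be false without a uniform lower bound. Your argument is purely algebraic: you take the explicit family $\{\Sigma^i\Gamma^{\geq -i}\}_{i\geq 0}$ from the introduction's second example, exploit the injectivity of $\prod_{i\geq 0}\Sigma^i\iota\Gamma$ together with Theorem \ref{main thm 1}(6) and Corollary \ref{main cor 10} to read off $R^1\prod^\Gamma$ via the long exact sequence, and exhibit a concrete nonzero class built from the Milnor generators. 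The paper's route is shorter and offloads work onto standard topology, whereas yours is self-contained and produces an explicit nonzero derived product---arguably more satisfying to a reader who wants the algebra to carry the full weight. One small indexing caution: the paper phrases its explicit witness as a class in $R^1\prod^\Gamma_{i\geq 1}\Sigma^{2(p^i-1)-1}\Gamma^{\geq -2(p^i-1)+1}$ rather than in $R^1\prod^\Gamma_{i\geq 0}\Sigma^i\Gamma^{\geq -i}$, so if you want to quote the description in \eqref{iso 0039558} verbatim you must place each $\xi_j$ in the unique coordinate $i$ with $n-i=|\xi_j|$ (filling the remaining coordinates with zero); but your underlying observation---that some homogeneous element of $\Gamma^*$ of arbitrarily high degree acts nontrivially on some coordinate---is exactly the right nonvanishing criterion and holds for the dual Steenrod algebra.
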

\begin{proof}
It is already well-known (but we still give some explanation in the rest of this proof) that the category of graded comodules over the dual Steenrod algebra does not have exact products, so the claim is a corollary of Corollary \ref{main cor 4a}.

If the category of bounded-above graded comodules over the dual Steenrod algebra had exact products, this would imply that, for any countable set $\{ X_i\}$ of bounded-below $H\mathbb{F}_p$-nilpotently complete spectra, the Hovey-Sadofsky spectral sequence 
\[ E_2^{*,*} \cong R^*\prod_{i\in I}^{\Gamma}\left( \{ H_*(X_i;\mathbb{F}_p)\}\right) \Rightarrow H_*\left( \prod_{i\in I} X_i;\mathbb{F}_p\right)\] collapses on to the $R^0\prod^{\Gamma}$-line at the $E_2$-page. One can consult Sadofsky's unpublished preprint \cite{sadofsky2001homology} for the original construction of this spectral sequence (which is given there more generally for sequential limits, not just countable products), Hovey's paper \cite{MR2337861} for a published account (which, however, discusses convergence only when $H_*(-;\mathbb{F}_p)$ is replaced with a Morava $E$-theory), and the appendix of \cite{MR4080481} for a published account which discusses convergence of the spectral sequence in the case at hand, i.e., the case of mod $p$ homology. Collapse of the spectral sequence for all $\{ X_i: i\in\mathbb{N}\}$ would imply that mod $p$ homology commutes with all countable products of bounded-below $H\mathbb{F}_p$-nilpotently complete spectra, and consequently that mod $p$ homology commutes with all sequential homotopy limits of bounded-below $H\mathbb{F}_p$-nilpotently complete spectra, which is well-known to be untrue without some additional hypothesis (e.g. that the spectra are {\em uniformly} bounded below). 
\end{proof}

\section{Derived functors of sequential limit in comodule categories.}
\label{Derived functors of seq...}

Let $\Gamma$ be a graded coalgebra over a field $A$. We continue to write $\iota: \gr\Comod(\Gamma) \rightarrow \gr\Mod(\Gamma^*)$ for the inclusion of comodules into modules via the adjoint $\Gamma^*$-action, and we continue to write $\tr: \gr\Mod(\Gamma^*) \rightarrow \gr\Comod(\Gamma)$ for the right adjoint of $\iota$.


Let $\mathcal{D}$ be a small category. 
For any functor $\mathcal{F}:\mathcal{D}\rightarrow\gr\Comod(\Gamma)$, the Bousfield-Kan construction of $\iota\circ\mathcal{F}$ is the cosimplicial graded $\Gamma^*$-module
\begin{equation}\label{bk construction} \xymatrix{ 
\underset{x\in N\mathcal{D}_0}{\prod} \iota\mathcal{F}(\cod x) \ar@<1ex>[r] \ar@<-1ex>[r] & 
 \underset{x\in N\mathcal{D}_1}{\prod} \iota\mathcal{F}(\cod x)  \ar[l]\ar@<2ex>[r]\ar[r]\ar@<-2ex>[r] & 
 \underset{x\in N\mathcal{D}_2}{\prod} \iota\mathcal{F}(\cod x)  \ar@<1ex>[l] \ar@<-1ex>[l] \ar@<3ex>[r] \ar@<1ex>[r] \ar@<-1ex>[r] \ar@<-3ex>[r] & \dots \ar@<2ex>[l]\ar[l]\ar@<-2ex>[l] , }\end{equation}
The notation is as follows: $N\mathcal{D}_{\bullet}$ is the nerve of $\mathcal{D}$, so for $i>0$, $N\mathcal{D}_{i}$ is the set of composable $i$-tuples of morphisms of $\mathcal{D}$, while $N\mathcal{D}_{0}$ is the set of objects of $\mathcal{D}$. The notation $\cod x$ is supposed to denote the final object appearing in a composable $i$-tuple of morphisms $x$; in the case $i=1$, this is simply the codomain of a morphism.
We write $C^{\bullet}(\iota\mathcal{F})$ for the Moore complex (i.e., alternating sum cochain complex) of \eqref{bk construction}, so that $H^i(C^{\bullet}(\iota\mathcal{F})) \cong (R^i\lim)(\iota\mathcal{F})$ by Proposition XI.6.2 of \cite{MR0365573}.

In Theorem \ref{derived sequential lim thm}, we write $\lim^{\Gamma}$ for a limit taken in the category of graded $\Gamma$-comodules.
\begin{theorem}\label{derived sequential lim thm}
Suppose that $A$ is a field and that $\Gamma$ is a finite-type $A$-coalgebra such that $\Gamma^*$ is connected. 
Let \begin{equation}\label{seq 430949598}\dots\rightarrow M_2\rightarrow M_1\rightarrow M_0\end{equation} be a sequence of bounded-above\footnote{Again, to be clear: these comodules do not need to be {\em uniformly} bounded above.} graded $\Gamma$-comodules such that $R^1\lim$ vanishes on the sequence of graded $\Gamma^*$-modules 
$\dots\rightarrow \iota M_2\rightarrow \iota M_1\rightarrow \iota M_0$.
Then there is an isomorphism of $\Gamma^*$-modules
\begin{align}\label{iso 09994} 
 \iota R^s\lim^{\Gamma}_iM_i &\cong H^s_{\dist}(\lim_i \iota M_i)\end{align}
for each $s$,
and consequently a long exact sequence
\begin{equation}\label{les 239094}\xymatrix{
 0 \ar[r] & 
  \iota \lim^{\Gamma}_i M_i \ar[r] & 
  H^0_{\dist}\left( \prod_i \iota M_i\right) \ar[r] &
  H^0_{\dist}\left( \prod_i \iota M_i\right) \ar`r_l[ll] `l[dll] [dll] \\
 & \iota R^1\lim^{\Gamma}_i M_i \ar[r] &
  H^1_{\dist}\left( \prod_i \iota M_i\right) \ar[r] &
  H^1_{\dist}\left( \prod_i \iota M_i\right) \ar`r_l[ll] `l[dll] [dll] \\
 & \iota R^2\lim^{\Gamma}_i M_i \ar[r] &
  H^2_{\dist}\left( \prod_i \iota M_i\right) \ar[r] &
  \dots .}\end{equation}
\end{theorem}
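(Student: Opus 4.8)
The plan is to reduce the comodule-side derived limit to the module-side functor $\tr$. Because $\iota$ is left adjoint to $\tr$ and is fully faithful, an adjunction computation shows that for a tower $\mathcal F = (\dots\to M_2\to M_1\to M_0)$ of graded $\Gamma$-comodules one has $\lim^{\Gamma}_i M_i\cong \tr(\lim_i \iota M_i)$, and more precisely that $\lim^{\Gamma}\cong \tr\circ\lim\circ\iota^{\mathbb N^{\op}}$ as functors $\gr\Comod(\Gamma)^{\mathbb N^{\op}}\to\gr\Comod(\Gamma)$, where $\iota^{\mathbb N^{\op}}$ is the (exact, fully faithful) postcomposition functor. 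The goal is then to prove $R^s\lim^{\Gamma}_i M_i\cong R^s\tr(\lim_i\iota M_i)$ for all $s$; applying the exact functor $\iota$ and invoking the natural isomorphism $H^s_{\dist}(-)\cong\iota(R^s\tr(-))$ of Theorem \ref{main thm 1}(1) then yields \eqref{iso 09994}.

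\textbf{A good injective resolution of the tower.} First I would record two preliminary facts. (i) Since $\Gamma$ is finite-type and concentrated in nonpositive degrees, a bounded-above graded $\Gamma$-comodule $N$ has a bounded-above injective hull $E(N)$: the coaction embeds $N$ into the cofree comodule $N\otimes_A\Gamma$, which is bounded above and injective, and $E(N)$ sits inside it. (ii) For graded $\Gamma$-comodules $J_0,J_1,J_2,\dots$, the tower $\mathbb I(J_\bullet)$ with $\mathbb I(J_\bullet)_n=\bigoplus_{m\le n}J_m$ and transition maps the projections off the top summand is injective in $\gr\Comod(\Gamma)^{\mathbb N^{\op}}$ when each $J_m$ is injective, and any tower of bounded-above comodules embeds, via the evident map assembled from its transition maps and the inclusions $N_m\hookrightarrow E(N_m)$, into such an $\mathbb I(J_\bullet)$ with each $J_m$ bounded-above injective. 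Iterating (i) and (ii), the tower $\mathcal F$ admits an injective resolution $\mathcal I^{\bullet}$ in $\gr\Comod(\Gamma)^{\mathbb N^{\op}}$ in which every graded comodule $\mathcal I^k_n$ is a bounded-above injective $\Gamma$-comodule, and in fact each $\mathcal I^k$ has the telescope form $\mathbb I(J^{(k)}_\bullet)$.

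\textbf{Transport to modules and conclusion.} Applying $\iota$ levelwise, exactness of $\iota$ makes $\iota^{\mathbb N^{\op}}\mathcal I^{\bullet}$ a resolution of $\iota^{\mathbb N^{\op}}\mathcal F$; since $\iota$ preserves biproducts and, by Theorem \ref{main thm 1}(2), carries bounded-above injective comodules to injective modules, each $\iota^{\mathbb N^{\op}}\mathcal I^k\cong\mathbb I(\iota J^{(k)}_\bullet)$ is an injective object of $\gr\Mod(\Gamma^*)^{\mathbb N^{\op}}$, in particular $\lim$-acyclic. Hence $\lim_n(\iota^{\mathbb N^{\op}}\mathcal I^{\bullet})$ computes $R^*\lim(\iota^{\mathbb N^{\op}}\mathcal F)$; by hypothesis $R^1\lim_i\iota M_i=0$, and $R^{\ge 2}\lim$ of a tower vanishes in the module category, so this complex is a resolution of $\lim_i\iota M_i$. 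Each of its terms is $\lim_n\bigoplus_{m\le n}\iota J^{(k)}_m\cong\prod_m\iota J^{(k)}_m$, a product of injective graded $\Gamma^*$-modules, hence injective, so $\lim_n(\iota^{\mathbb N^{\op}}\mathcal I^{\bullet})$ is an \emph{injective} resolution of $\lim_i\iota M_i$. Applying $\tr$ and using $\tr\circ\lim\circ\iota^{\mathbb N^{\op}}=\lim^{\Gamma}$ identifies $H^*\bigl(\tr\lim_n\iota^{\mathbb N^{\op}}\mathcal I^{\bullet}\bigr)$ simultaneously with $R^*\tr(\lim_i\iota M_i)$ and with $H^*(\lim^{\Gamma}\mathcal I^{\bullet})=R^*\lim^{\Gamma}\mathcal F$, giving \eqref{iso 09994}. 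For \eqref{les 239094}, the vanishing of $R^1\lim_i\iota M_i$ makes $0\to\lim_i\iota M_i\to\prod_i\iota M_i\xrightarrow{\id-T}\prod_i\iota M_i\to 0$ a short exact sequence of graded $\Gamma^*$-modules; applying the cohomological $\delta$-functor $H^*_{\dist}=\iota R^*\tr$ and substituting $H^s_{\dist}(\lim_i\iota M_i)\cong\iota R^s\lim^{\Gamma}_i M_i$ from \eqref{iso 09994} produces the stated long exact sequence.

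\textbf{Main obstacle.} The delicate point is the bookkeeping around injectives in the tower category $\gr\Comod(\Gamma)^{\mathbb N^{\op}}$: one must exhibit enough injectives of the concrete telescope form $\mathbb I(J_\bullet)$ with the $J_m$ chosen bounded-above so that $\iota$ preserves them, and then check that after applying $\iota$ and taking the levelwise limit one lands on an \emph{injective} — not merely $\lim$-acyclic — resolution of $\lim_i\iota M_i$, since only then does applying $\tr$ legitimately compute $R^*\tr$. The bounded-aboveness of the $M_i$ is used exactly here (to obtain bounded-above injective hulls and to invoke Theorem \ref{main thm 1}(2)), while the hypothesis $R^1\lim_i\iota M_i=0$ is what collapses the module-side derived limit into degree zero and thereby turns the comparison complex into a genuine resolution.
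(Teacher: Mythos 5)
Your proof is correct, and it takes a genuinely different route from the paper's. The paper works with the Moore complex of the Bousfield--Kan construction of $\iota\circ\mathcal F$ and runs the two hypercohomology spectral sequences for $\tr$ applied to a Cartan--Eilenberg resolution of that complex; one collapses from the $R^1\lim$ hypothesis and the vanishing of $R^{\geq 2}\lim$, the other collapses because $H^t_{\dist}$ vanishes for $t>0$ on the bounded-above modules $\iota M_i$ (Theorem \ref{main thm 1}(4)), and equality of abutments gives the result. You instead build an explicit injective resolution $\mathcal I^\bullet$ of $\mathcal F$ in the tower category $\gr\Comod(\Gamma)^{\mathbb N^{\op}}$ by the telescope construction $\mathbb I(J_\bullet)=\prod_m R_m J_m$ with bounded-above injective comodule entries, push it through $\iota$ (which lands on injective towers because $\iota$ preserves bounded-above injectives, Theorem \ref{main thm 1}(2), and because $R_m$ has the exact left adjoint $\mathrm{ev}_m$), and observe that taking $\lim_n$ yields a genuine injective resolution of $\lim_i\iota M_i$ by products $\prod_m\iota J^{(k)}_m$. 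This lets you read off $R^*\tr(\lim_i\iota M_i)\cong R^*\lim^{\Gamma}\mathcal F$ by applying $\tr$ once, with no appeal to vanishing of $R^t\tr$ on bounded-above modules. Both proofs use the same inputs --- bounded-aboveness, $R^1\lim=0$, and the $\iota$-preservation of bounded-above injectives --- but yours trades the spectral-sequence bookkeeping for the concrete construction of a well-behaved injective resolution (a small thing to tidy: you do not actually need injective \emph{hulls} $E(N)$; the cofree comodules $N\otimes_A\Gamma$ already suffice and are obviously bounded above when $N$ is). Your treatment of the long exact sequence via the Milnor short exact sequence is the same as the paper's.
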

\begin{proof}
Consider the natural numbers $\mathbb{N}$ under their usual ordering, and let $\mathbb{N}^{\op}$ be the opposite category of the resulting partially-ordered set. Then the sequence \eqref{seq 430949598} is a functor $\mathcal{F}: \mathbb{N}^{\op}\rightarrow\gr\Comod(\Gamma)$.
Since $\tr$ is a right adjoint, it is left exact, so we have the hypercohomology spectral sequences
\begin{align*}
 {}^{\prime}E_1^{s,t} 
  & \cong R^t\tr(C^s\iota \mathcal{F}) \\
 {}^{\prime\prime}E_2^{s,t} 
  &\cong R^s\tr(H^t C^{\bullet}\iota\mathcal{F}) \\
  &\cong R^s\tr(R^t\lim \iota\mathcal{F}) ,
\end{align*}
each of which converges to the cohomology of the total complex of $\tr$ applied to $C^{\bullet}\iota\mathcal{F}$. 
Since $R^t\lim \iota\mathcal{F}$ is a derived functor of sequential limit in a category of modules, it is only capable of being nontrivial in degrees $t=0$ and $t=1$. Furthermore, $R^1\lim \iota\mathcal{F}$ vanishes by assumption. Hence the spectral sequence ${}^{\prime\prime}E_r^{*,*}$ collapses at the ${}^{\prime\prime}E_2$-page, with ${}^{\prime\prime}E_2^{s,0}\cong R^s\tr(\lim\iota\mathcal{F})$ and with ${}^{\prime\prime}E_2^{s,t}$ trivial for $t>0$.

Meanwhile, ${}^{\prime}E_1^{*,t}$ is precisely the Moore complex $C^{\bullet}\left((R^t\tr) \iota\mathcal{F}\right)$ of the Bousfield-Kan construction of $(R^t\tr) \circ \iota\circ \mathcal{F}$, since $\tr$ commutes with products and hence $R^t\tr$ commutes with products. So 
\begin{align}
\nonumber {}^{\prime}E_2^{s,t} &\cong R^s\lim^{\Gamma} (R^t\tr (\iota\mathcal{F}))\\ 
\label{iso 534084}&\cong R^s\lim^{\Gamma}_i (R^t\tr (\iota M_i)),\end{align}
with \eqref{iso 534084} due to exactness of $\iota$.
Now ${}^{\prime}E_2^{s,t}$ is trivial for $t>0$, since $\iota$ is exact and faithful and since $\iota R^t\tr (\iota M_i) \cong H^t_{\dist}(\iota M_i) \cong 0$ for $t>0$, as distinguished local cohomology vanishes on bounded-above modules, by Theorem \ref{main thm 1}. 
Hence ${}^{\prime}E_2^{s,0} \cong R^s\lim^{\Gamma}( \tr\iota\mathcal{F}) \cong R^s\lim^{\Gamma} \mathcal{F}$, and ${}^{\prime}E_2^{s,t}$ is trivial for $t\neq 0$.

Since both spectral sequences converge to the cohomology of the same total complex, we have $R^n\lim^{\Gamma} \mathcal{F} \cong R^n\tr(\lim\iota\mathcal{F})$ for all $n$, and on $\iota$ and using Theorem \ref{main thm 1}, we have the desired isomorphism
\begin{align}\label{iso 40059454} \iota R^n\lim_i^{\Gamma}(M_i) &\cong H^n_{\dist}\left(\lim_i\iota M_i\right).\end{align}

Finally, long exact sequence \eqref{les 239094} arises from applying $H^n_{\dist}$ to the Milnor-type short exact sequence of graded $\Gamma^*$-modules
\[ 0 \rightarrow \lim_i \iota M_i \rightarrow \prod_i \iota M_i \rightarrow \prod_i \iota M_i \rightarrow 0 \]
and using isomorphism \eqref{iso 40059454}.
\end{proof}

\begin{corollary}
Suppose that $A$ is a field and that $\Gamma$ is a finite-type Mitchell coalgebra over $A$. Let $\dots\rightarrow M_2\rightarrow M_1\rightarrow M_0$ be a sequence of bounded-above graded $\Gamma$-comodules such that $R^1\lim$ vanishes on the sequence of graded $\Gamma^*$-modules 
$\dots\rightarrow \iota M_2\rightarrow \iota M_1\rightarrow \iota M_0$.
Then we have an isomorphism of abelian groups
\[ R^*\lim^{\Gamma}_i M_i  \cong \underset{j\rightarrow\infty}{\colim}\Ext^*_{\Gamma^*}\left(\Gamma^*/I_j,\lim_i \iota M_i\right)\]
where $I_j$ is the ideal of $\Gamma^*$ generated by all homogeneous elements of degree $\geq j$.
\end{corollary}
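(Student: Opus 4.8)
The plan is to deduce this corollary by simply splicing together two results already established in the excerpt: Theorem~\ref{derived sequential lim thm}, which identifies the derived sequential limit in $\gr\Comod(\Gamma)$ with distinguished local cohomology, and Theorem~\ref{main thm on mitchell coalgebras}(4), which identifies distinguished local cohomology over the dual of a finite-type Mitchell coalgebra with a colimit of $\Ext$-groups.

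First I would check that the hypotheses of Theorem~\ref{derived sequential lim thm} are in force. That theorem requires $A$ to be a field (given), $\Gamma$ to be a finite-type $A$-coalgebra with $\Gamma^*$ connected, and $R^1\lim$ to vanish on $\dots\rightarrow\iota M_2\rightarrow\iota M_1\rightarrow\iota M_0$. The $R^1\lim$-vanishing is part of our hypothesis. The connectedness of $\Gamma^*$ is automatic: by the standing convention of the paper (see Conventions~\ref{conventions}), a finite-type graded coalgebra is always graded so that its dual algebra is connected, and a Mitchell coalgebra is in particular a graded coalgebra; moreover the Poincar\'e-algebra quotients $\Gamma^*(n)$ force coconnectivity of $\Gamma$, hence connectivity of $\Gamma^*$. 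So Theorem~\ref{derived sequential lim thm} applies and yields, for each $s$, an isomorphism of graded $\Gamma^*$-modules
\[ \iota R^s\lim^{\Gamma}_i M_i \;\cong\; H^s_{\dist}\!\left(\lim_i \iota M_i\right).\]

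Next I would invoke Theorem~\ref{main thm on mitchell coalgebras}(4): since $A$ is a field and $\Gamma$ is a finite-type Mitchell coalgebra, for every graded $\Gamma^*$-module $M$ and every integer $n$ there is a natural isomorphism of abelian groups $H^n_{\dist}(M)\cong\underset{j\rightarrow\infty}{\colim}\,\Ext^n_{\Gamma^*}(\Gamma^*/I_j,M)$, where $I_j$ is the ideal of $\Gamma^*$ generated by all homogeneous elements of degree $\geq j$. Applying this with $M=\lim_i\iota M_i$ and composing with the previous isomorphism, and then forgetting down to the level of abelian groups (which is all that Theorem~\ref{main thm on mitchell coalgebras}(4) delivers anyway, and all the corollary claims), gives
\[ R^s\lim^{\Gamma}_i M_i \;\cong\; \underset{j\rightarrow\infty}{\colim}\,\Ext^s_{\Gamma^*}\!\left(\Gamma^*/I_j,\,\lim_i \iota M_i\right)\]
for each $s$, which is the assertion.

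There is essentially no obstacle here beyond bookkeeping: the corollary is a formal consequence of the two cited theorems once one has checked the connectedness hypothesis, which is handled by the paper's conventions. The only point deserving a word of care in the write-up is the distinction between an isomorphism of $\Gamma^*$-modules (what Theorem~\ref{derived sequential lim thm} gives) and an isomorphism of abelian groups (what Theorem~\ref{main thm on mitchell coalgebras} gives and what is claimed), so I would phrase the conclusion explicitly as an isomorphism of abelian groups, noting that the $\iota$ may be suppressed.
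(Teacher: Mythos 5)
Your proof is correct and follows exactly the paper's route: the paper's own proof consists of the single sentence ``Consequence of Theorems~\ref{derived sequential lim thm} and~\ref{main thm on mitchell coalgebras},'' and you have merely spelled out the hypothesis-checking and the composition of the two isomorphisms. The care you take about the $\Gamma^*$-module versus abelian-group distinction and the automatic connectivity of $\Gamma^*$ is reasonable diligence, not a deviation.
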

\begin{proof}
Consequence of Theorems \ref{derived sequential lim thm} and \ref{main thm on mitchell coalgebras}.
\end{proof}

\begin{corollary}\label{steenrod alg seq cor}
Suppose that $p$ is a prime number. Write $\Gamma$ for the mod $p$ dual Steenrod algebra. Let $\dots\rightarrow M_2\rightarrow M_1\rightarrow M_0$ be a sequence of bounded-above graded $\Gamma$-comodules such that $R^1\lim$ vanishes on the sequence of graded $\Gamma^*$-modules 
$\dots\rightarrow \iota M_2\rightarrow \iota M_1\rightarrow \iota M_0$.
Then we have an isomorphism of abelian groups
between $R^*\lim^{\Gamma}_i M_i$ and the graded local cohomology $\underset{j\rightarrow\infty}{\colim}\Ext^*_{\Gamma^*}\left(\Gamma^*/I_j,\lim_i \iota M_i\right)$ of the Steenrod algebra $\Gamma^*$.
\end{corollary}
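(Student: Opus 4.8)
The plan is to obtain this statement as a direct specialization of the preceding corollary, whose hypotheses the mod $p$ dual Steenrod algebra satisfies. First I would check those hypotheses: the ground field is $A=\mathbb{F}_p$; the mod $p$ dual Steenrod algebra $\Gamma$ is a finite-type graded $A$-coalgebra; and its dual algebra $\Gamma^*$, the mod $p$ Steenrod algebra, is connected (one-dimensional in degree $0$, trivial in negative degrees, finite-dimensional in each degree). The one substantive input is that $\Gamma$ is a \emph{Mitchell} coalgebra, and this is exactly the content of the main theorem of \cite{MR793186}, as recorded immediately after Definition \ref{def of mitchell condition}. So $\Gamma$ is a finite-type Mitchell coalgebra over a field, and the sequence $\dots\to M_2\to M_1\to M_0$ of bounded-above graded $\Gamma$-comodules is assumed to have $R^1\lim$ vanishing on $\dots\to\iota M_2\to\iota M_1\to\iota M_0$. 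Hence all hypotheses of the preceding corollary hold, and it yields the asserted isomorphism of graded abelian groups between $R^*\lim^{\Gamma}_i M_i$ and $\underset{j\to\infty}{\colim}\Ext^*_{\Gamma^*}(\Gamma^*/I_j,\lim_i\iota M_i)$, where $I_j\subseteq\Gamma^*$ is generated by all homogeneous elements of degree $\geq j$.

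If one wishes to make the two-step structure explicit rather than quoting the intermediate corollary, the argument runs as follows. Since $\Gamma$ is finite-type with $\Gamma^*$ connected and each $M_i$ is bounded above, Theorem \ref{derived sequential lim thm} applies (its $R^1\lim$-vanishing hypothesis being the one we have assumed at the module level) and gives $\iota R^s\lim^{\Gamma}_i M_i\cong H^s_{\dist}(\lim_i\iota M_i)$ for every $s$. Then, because $\Gamma$ is a finite-type Mitchell coalgebra, part (4) of Theorem \ref{main thm on mitchell coalgebras} identifies $H^s_{\dist}(N)$ with $\underset{j\to\infty}{\colim}\Ext^s_{\Gamma^*}(\Gamma^*/I_j,N)$ naturally in $N$; taking $N=\lim_i\iota M_i$ and composing with the previous isomorphism gives the claim. (The final statement is an isomorphism of abelian groups, so the faithful exact functor $\iota$ may be dropped from the left-hand side.)

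There is essentially no obstacle here beyond invoking the already black-boxed non-formal result of Mitchell: the entire weight of the corollary rests on \cite{MR793186}, together with the formal machinery assembled in Theorems \ref{derived sequential lim thm} and \ref{main thm on mitchell coalgebras}. The only point requiring any care is bookkeeping — confirming that the $R^1\lim$-vanishing condition is exactly the one imposed in Theorem \ref{derived sequential lim thm}, and that ``bounded above'' is meant comodule-by-comodule rather than uniformly — both of which are immediate from the hypotheses as stated.
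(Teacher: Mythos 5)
Your proposal is correct and matches the paper's own treatment: the paper gives no separate proof for this corollary beyond its being the specialization of the immediately preceding corollary to $\Gamma$ the mod $p$ dual Steenrod algebra, justified by Mitchell's theorem that the dual Steenrod algebra is a finite-type Mitchell coalgebra. Your unrolled two-step version (Theorem \ref{derived sequential lim thm} followed by Theorem \ref{main thm on mitchell coalgebras}) is exactly how the paper proves that preceding corollary.
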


\appendix
\section{Review of Margolis' basic results on $\mathcal{P}$-algebras.}
\label{Review of Margolis...}

Margolis' fundamental results on graded module theory over $\mathcal{P}$-algebras are Theorem 5 from section 13.2 and Theorem 12 from section 13.3 of Margolis's book \cite{MR738973}. We state those two results:
\begin{theorem}{\bf (Margolis.)} \label{thm from margolis} Let $B$ be a $\mathcal{P}$-algebra, and let $B(0) \subsetneq B(1) \subsetneq \dots$ be a sequence of Poincar\'{e} subalgebras of $B$, as in Definition \ref{def of p-alg}. Let $M$ be a graded left $B$-module. Then the following are equivalent:
\begin{enumerate}
\item The projective dimension of $M$ is $\leq 1$.
\item $M$ is flat.
\item The injective dimension of $M$ is $\leq 1$.
\item $M$ is free over $B(n)$ for each $n$.
\end{enumerate}
Furthermore, if $M$ does not satisfy these (equivalent) conditions, then the projective dimension, weak dimension, and injective dimension of $M$ are each infinite.
\end{theorem}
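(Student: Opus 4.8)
The plan is to cite Margolis directly rather than reprove the theorem, since the statement is explicitly attributed to Theorem 5 of section 13.2 and Theorem 12 of section 13.3 of \cite{MR738973}, and the appendix is announced as containing no new results. So strictly speaking the "proof" is a pointer. Still, if one wanted to indicate the shape of the argument, here is the approach I would take.

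First I would establish the equivalence of (2) and (4): a graded $B$-module $M$ is flat if and only if it is flat over each Poincar\'{e} subalgebra $B(n)$, using that $B = \bigcup_n B(n)$ is a filtered colimit and that flatness is detected on finitely generated submodules (each of which lands in some $B(n)$); then, since each $B(n)$ is a finite-dimensional connected graded algebra over a field, a theorem of Margolis (building on the self-injectivity/Poincar\'{e} duality of $B(n)$, as in \cite{MR335572}) identifies flat $B(n)$-modules with free $B(n)$-modules. The key input here is that over a Poincar\'{e} algebra, flat equals projective equals free in the graded setting.

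Next I would handle the homological dimension statements. For (1) $\Leftrightarrow$ (2): one direction is formal (projective dimension $\leq 1$ implies flat dimension $\leq 1$; combined with (4) and a dimension-shifting argument using that $B(n+1)$ is flat over $B(n)$, one upgrades this). For the converse, the crucial ingredient is that a flat $B$-module admits a projective (indeed free) resolution of length $\leq 1$, which follows from writing $M$ as a suitable colimit compatible with the filtration $B(0) \subsetneq B(1) \subsetneq \cdots$ and using the flatness of each extension $B(n) \subseteq B(n+1)$ to splice together free resolutions without increasing length. The injective-dimension equivalence (3) is then obtained by a duality argument: $B$ is self-injective (Theorem 12 of section 13.3 of \cite{MR738973}), so linear duality interchanges projective and injective resolutions up to the usual finiteness caveats, converting the projective-dimension statement into the injective-dimension statement. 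Finally, the "all or nothing" dichotomy — that failure of the equivalent conditions forces all three dimensions to be infinite — follows from the fact that in this setting, a module of finite projective dimension has projective dimension $\leq 1$: if $\mathrm{pd}(M) = d \geq 2$, then the $(d-1)$-st syzygy would have projective dimension $1$ hence be flat, hence free over each $B(n)$, forcing $\mathrm{pd}(M) \leq 1$, a contradiction; the same syzygy argument applies to weak and injective dimension.

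The main obstacle, were one to write this out in full, is the passage from flatness to a length-$1$ free resolution over the non-Noetherian ring $B$: one cannot simply invoke standard Noetherian machinery, and must instead exploit the very specific structure of a $\mathcal{P}$-algebra — the increasing union of Poincar\'{e} subalgebras with flat extensions — to build the resolution by hand. This is precisely the content of Margolis' original argument, and since it is already in the literature, the appropriate move in this paper is simply to quote it.
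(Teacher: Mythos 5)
The paper proves nothing here: it simply states the theorem with attribution to Theorem~5 of section~13.2 of \cite{MR738973} and declares \cref{Review of Margolis...} to contain no new results, so your decision to quote Margolis is exactly the paper's approach. (Your supplementary sketch of the dichotomy step has a gap as written — showing the $(d-1)$-st syzygy of $M$ is free over each $B(n)$ does not directly force $\mathrm{pd}(M)\le 1$, since the syzygy is a different module — but as this sketch is not part of the paper's treatment and you already defer to Margolis for the details, the primary proposal stands.)
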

\begin{theorem}{\bf (Margolis.)} \label{thm from margolis 2} Let $B$ be a $\mathcal{P}$-algebra, and let $B(0) \subsetneq B(1) \subsetneq \dots$ be a sequence of Poincar\'{e} subalgebras of $B$, as in Definition \ref{def of p-alg}. Let $M$ be a {\em bounded-below} graded left $B$-module. Then the following are equivalent:
\begin{enumerate}
\item $M$ is free.
\item $M$ is projective.
\item $M$ is flat.
\item $M$ is injective.
\item $M$ is free over $B(n)$ for each $n$.
\end{enumerate}
Furthermore, if $M$ does not satisfy these (equivalent) conditions, then the projective dimension, weak dimension, and injective dimension of $M$ are each infinite.
\end{theorem}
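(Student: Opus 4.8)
This is Theorem 12 of section 13.3 of Margolis's book \cite{MR738973}; I indicate how I would organize the argument, taking the companion Theorem \ref{thm from margolis} as known. The implications $(1)\Rightarrow(2)\Rightarrow(3)$ are formal, and $(3)\Leftrightarrow(5)$ is part of Theorem \ref{thm from margolis} (or, directly: each $B(n)$ is finite-dimensional connected graded, so graded Nakayama upgrades ``flat over $B(n)$'' to ``free over $B(n)$'', whence $B$ is a filtered colimit of $B(n)$-free modules and is flat over each $B(n)$, and flatness of $M$ over $B$ then descends along $B(n)\hookrightarrow B$ via $A\otimes_{B(n)}M\cong(A\otimes_{B(n)}B)\otimes_B M$, a composite of exact functors). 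Likewise $(4)\Rightarrow(3)$ is immediate from Theorem \ref{thm from margolis}, since an injective module has injective dimension $\leq 1$. So the real content is $(5)\Rightarrow(1)$, then $(1)\Rightarrow(4)$, then the ``furthermore'' clause.

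For $(5)\Rightarrow(1)$: since $M$ is bounded below, graded Nakayama supplies a \emph{minimal} free cover $\epsilon\colon F\twoheadrightarrow M$, i.e.\ one with $K\coloneqq\ker\epsilon\subseteq\overline{B}F$. Restricting to $B(n)$, both $F$ and $M$ are free over $B(n)$ (the former since $B$ is, the latter by hypothesis $(5)$), so $\epsilon$ splits over $B(n)$ and $K$ is a bounded-below summand of a free $B(n)$-module, hence again free over each $B(n)$. Applying $k\otimes_B(-)$ to $0\to K\to F\to M\to 0$ and using that $k\otimes_B F\to k\otimes_B M$ is an isomorphism (minimality of $F$) identifies $K/\overline{B}K\cong\mathrm{Tor}^B_1(k,M)$. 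Thus the theorem reduces to the vanishing $\mathrm{Tor}^B_1(k,M)=0$: once we have that, $K=\overline{B}K$, and graded Nakayama on the bounded-below module $K$ forces $K=0$, so $M\cong F$ is free.

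I expect the vanishing $\mathrm{Tor}^B_i(k,M)=0$ for $i>0$ to be the main obstacle. The plan is to exploit that $B=\colim_n B(n)$ with flat transition maps, that $k$ and $M$ restrict at every finite level, and that $\mathrm{Tor}^{B(n)}_i(k,M)=0$ for $i>0$ because $M|_{B(n)}$ is free; assembling these along the tower so that $\mathrm{Tor}^B_i(k,M)\cong\colim_n\mathrm{Tor}^{B(n)}_i(k,M)$ then gives the vanishing. The delicate point is that $M$ is not literally pulled back from a single $B(n)$, so one cannot simply quote a base-change theorem; instead one compares bar-type resolutions of $k$ over $B$ and over the $B(n)$, and here the self-duality (Frobenius) property of the Poincar\'e algebras $B(n)$, together with flatness of $B(n+1)$ over $B(n)$, is what makes the finite-level resolutions fit together coherently.

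For $(1)\Rightarrow(4)$ I would use the $B(n)$-module duality isomorphisms $B(n)\cong\Sigma^{d_n}B(n)^{*}$: a bounded-below free $B$-module restricts on each $B(n)$ to a free, hence (Frobenius) injective, $B(n)$-module, and the compatibility of these dualities along the tower — using boundedness below — lets one conclude that $B$ itself, and then any bounded-below free $B$-module, is injective over $B$. Finally, for the ``furthermore'' clause: if the bounded-below module $M$ is not free, then by $(1)\Leftrightarrow(5)$ it is not free over some $B(n_0)$, equivalently (Frobenius property of $B(n_0)$) not flat over $B(n_0)$; over the self-injective algebra $B(n_0)$ a non-projective module has infinite projective, flat, and injective dimension; and since $B$ is free over $B(n_0)$ on both sides, restriction of scalars along $B(n_0)\hookrightarrow B$ carries projective, flat, and injective resolutions to resolutions of the same kind, so $\operatorname{pd}_B M=\operatorname{wd}_B M=\operatorname{id}_B M=\infty$.
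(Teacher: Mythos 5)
The paper does not prove this statement at all: it is quoted verbatim from Theorem 12 of section 13.3 of Margolis's book \cite{MR738973}, and the enclosing appendix is explicitly declared to contain ``no new results.'' So there is no internal argument for me to compare against; I can only assess your plan on its own terms.

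Your treatment of the ``furthermore'' clause is correct. Restriction along the free (on both sides) extension $B(n_0)\hookrightarrow B$ has an exact left adjoint (induction) and an exact right adjoint (coinduction), so it carries projective, flat, and injective resolutions over $B$ to resolutions of the same kind over $B(n_0)$. Over the Frobenius algebra $B(n_0)$, a module of finite homological dimension is forced to be projective (split the last syzygy using self-injectivity), and a bounded-below projective over a finite-dimensional connected graded algebra is free by graded Nakayama; so a bounded-below non-free module over $B(n_0)$ has infinite projective, flat, and injective dimension, and this passes up to $B$.

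For $(5)\Rightarrow(1)$, however, you have manufactured an obstacle for yourself. You state at the outset that you take Theorem \ref{thm from margolis} as known, and that theorem already says condition $(5)$ is equivalent to flatness of $M$ over $B$. Flatness is precisely the statement that $\mathrm{Tor}^B_i(N,M)=0$ for all $i>0$ and all right $B$-modules $N$; in particular $\mathrm{Tor}^B_1(k,M)=0$ costs nothing. The colimit-of-$\mathrm{Tor}$-along-the-tower machinery you forecast as ``the main obstacle'' is therefore entirely unnecessary: flatness is free, and your minimal-free-cover-plus-graded-Nakayama argument then finishes at once.

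The genuine difficulty in this theorem, and the one your sketch does not resolve, is $(1)\Rightarrow(4)$. There are two separate things to prove: that $B$ itself is injective as a graded left $B$-module, and that a coproduct of suspensions of $B$ subject only to a uniform lower bound on the degree shifts remains injective. Neither is automatic. A $\mathcal{P}$-algebra is never Noetherian, so by Bass--Papp a coproduct of injective $B$-modules can fail to be injective; and a bounded-below free module of infinite rank is not in general isomorphic to the corresponding product (the generating degrees are only required to be bounded below, not to tend to $\infty$), so one cannot simply cite exactness of products of injectives. Your phrase ``compatibility of the dualities along the tower'' names the right ingredient --- roughly what the paper elsewhere formalizes as the Mitchell condition --- but as written it is the hard core of the theorem and your plan treats it as a black box. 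A real proof plan needs to say what that compatibility is and precisely how boundedness below is used to pass from injectivity over each $B(n)$ to injectivity over $B$.
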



\def\cprime{$'$} \def\cprime{$'$} \def\cprime{$'$} \def\cprime{$'$}

\end{document}

\subsection{Infinite distinguished-cohomological dimension of individual modules, and the case of the Steenrod algebras.}

In \cref{Infinite distinguished-cohomological...} we showed that there is no finite bound on the distinguished local-cohomological dimension of the graded comodules over the dual of an infinite-dimensional $\mathcal{P}$-algebra over a countable field, like the Steenrod algebra. This leaves open the question of whether there may be some bound on the distinguished local-cohomological dimension of the graded comodules which have finite distinguished local-cohomological dimension.

This question is easier to understand if one compares to the classical notion of finitistic dimension, as in the foundational papers \cite{MR86822} and \cite{MR0086071} and \cite{MR157984}. Let $R$ be a commutative Artin algebra. The {\em finitistic dimension of $R$} is the supremum of the integers $n$ such that there exists a finitely generated $R$-module of projective dimension $n$. For example, Frobenius algebras have finitistic dimension zero, since every finitely generated module over a Frobenius algebra has projective dimension either $0$ or $\infty$. More generally, to say that $R$ has finitistic dimension $\leq n$ is to say that the possible projective dimensions of finitely generated $R$-modules are included in the set $\{ 0, 1, \dots ,n,\infty\}$.

These ideas motivate the following definition:
\begin{definition} \leavevmode
\begin{itemize}
\item Let $M$ be a graded $\Gamma^*$-module. By the {\em distinguished local-cohomological dimension of $M$} we mean the least integer $n$ such that $H^m_{\dist}(M)$ vanishes for all $m>n$, or $\infty$ if there is no such integer $n$.
\item By the {\em distinguished local-cohomological finitistic dimension of $\Gamma$} we mean the supremum of all the integers $n$ such that $n$ is the distinguished local-cohomological dimension of some graded $\Gamma^*$-module, or $\infty$ if there are no such integers $n$.
\item Similarly, by the {\em bounded-below distinguished local-cohomological finitistic dimension of $\Gamma$} we mean the supremum of all the integers $n$ such that $n$ is the distinguished local-cohomological dimension of some bounded-below graded $\Gamma^*$-module, or $\infty$ if there are no such integers $n$.
\end{itemize}
\end{definition}

It follows from Theorem \ref{thm from margolis} that, $\Gamma^*$ is a $\mathcal{P}$-algebra and $M$ is a bounded-below graded $\Gamma^*$-module, then the projective dimension of $M$ is either $0$ or $\infty$, and similarly the injective dimension of $M$ is either $0$ or $\infty$. So if one restricts to only the bounded-below modules, the finitistic dimension of $\Gamma^*$ is $0$, and indeed this is true in an especially strong sense (no restriction to finitely generated modules is necessary, and the role of projective dimension can be replaced with injective dimension without changing the result). 

At a glance it seems plausible that distinguished local-cohomological finitistic dimension could agree with ordinary finitistic dimension (or perhaps its bounded-below variant). In other words, when $\Gamma^*$ is a finite-type, infinite-dimensional $\mathcal{P}$-algebra like the Steenrod algebra, it seems plausible that $\Gamma^*$ could perhaps have bounded-below distinguished local-cohomological finitistic dimension zero.

The purpose of this section is to show that this plausible guess is, in fact, wrong: for every prime $p$, the Steenrod algebra has {\em infinite} distinguished local-cohomological finitistic dimension. That is, for every integer $n$, there exists a graded $\Gamma^*$-module whose distinguished local-cohomological dimension is finite but nevertheless greater than $n$. The same is true in the bounded-below setting as well. Perhaps the same is also true, more generally, for every infinite-dimensional finite-type $\mathcal{P}$-algebra (not only for the Steenrod algebras), but I do not know how to prove that.

The relevant results are Theorem \ref{inf finitistic dim thm} and Corollary \ref{inf finitistic dim cor}. First we need a couple of lemmas:
\begin{lemma}\label{dist ideals identification}
Let $\Gamma$ denote the $p$-primary dual Steenrod algebra, for some $p$. The following claims are each true:
\begin{enumerate}
\item If $p=2$, then for every positive integer $n$, the two-sided ideal $(\Sq^{2^n},\Sq^{2^{n+1}},\Sq^{2^{n+2}},\dots)$ of $\Gamma^*$ is distinguished\footnote{To be clear: distinguished ideals are always {\em right} ideals, by our conventions in Definition \ref{def of distinguished}. We are claiming here that the two-sided ideal of the Steenrod algebra $\Gamma^*$ generated by $\{\Sq^{2^n},\Sq^{2^{n+1}},\Sq^{2^{n+2}},\dots\}$, when regarded as a right ideal, is distinguished.}
.
\item For every odd prime $p$ and every positive integer $n$, the right ideal $(P^{p^n},P^{p^{n+1}},P^{p^{n+2}},\dots)$ of $\Gamma^*$ is distinguished.
\item If $p=2$, then every distinguished right ideal of $\Gamma^*$ contains $(\Sq^{2^n},\Sq^{2^{n+1}},\Sq^{2^{n+2}},\dots)$ for some $n$.
\item For every odd prime $p$, every distinguished right ideal of $\Gamma^*$ contains $(P^{p^n},P^{p^{n+1}},P^{p^{n+2}},\dots)$ for some $n$.
\end{enumerate}
\end{lemma}
\begin{proof}\leavevmode
\begin{enumerate}
\item Fix a nonnegative integer $n$, and let $A(n)$ be the usual subalgebra of $\Gamma^*$ generated by $\Sq^1, \Sq^2, \dots ,\Sq^{2^n}$. The linear dual $A(n)^*$ is isomorphic to $\mathbb{F}_2[\overline{\xi}_1,\overline{\xi}_2, \dots]/(\overline{\xi}_1^{2^{n+1}},\overline{\xi}_2^{2^{n}}, \overline{\xi}_3^{2^{n-1}}, \dots ,\overline{\xi}_{n+1}^{2})$, hence has a unique top-degree element $\overline{\xi}_1^{2^{n+1}-1}\overline{\xi}_2^{2^{n}-1}\overline{\xi}_3^{2^{n-1}-1}\dots \overline{\xi}_{n+1}$. Lift that element to the element with the same name (i.e., $\overline{\xi}_1^{2^{n+1}-1}\overline{\xi}_2^{2^{n}-1}\overline{\xi}_3^{2^{n-1}-1}\dots \overline{\xi}_{n+1}$) in the dual Steenrod algebra $\Gamma$; call that resulting element $T_n$. 

We claim that the annihilator of $T_n$, under the contragredient right action of $\Gamma^*$ on $\Gamma$, is the two-sided ideal $(\Sq^{2^{n+1}},\Sq^{2^{n+2}}, \dots)$. First, since $A(n)$ is self-dual (i.e., $A(n)$ is a Poincar\'{e} algebra, in the sense of Definition \ref{def of p-alg}), none of the nonzero elements of $A(n)$ itself annihilate $T_n$. The two-sided ideal $(\Sq^{2^{n+1}},\Sq^{2^{n+2}}, \dots)$ is precisely the two-sided ideal of $A$ gggxxxx (ALTERNATIVE APPROACH, MAYBE MUCH EASIER: SHOW THAT IT'S ENOUGH TO TAKE A COLIMIT OVER A SYSTEM OF IDEALS SUCH THAT EVERY DIST IDEAL IS NESTED BETWEEN TWO IDEALS IN THE SYSTEM, EVEN IF THE IDEALS IN THE SYSTEM AREN'T DISTINGUISHED THEMSELVES!)
Observe that the

The submodule of $\Gamma$ (with the contragredient right $\Gamma^*$-action) generated by the dual class of the top-degree element of $A(n-1)$ is annihilated by $(\Sq^{2^n},\Sq^{2^{n+1}},\dots)$, for degree reasons (ggggxxx NOT JUST FOR DEGREE REASONS! FIX THIS), while the intersection of $A(n-1)$ with the annihilator of that class is trivial, since $A(n-1)$ is self-dual (i.e., $A(n-1)$ is a Poincar\'{e} algebra, in the sense of Definition \ref{def of p-alg}). Consequently that class generated a graded right $\Gamma^*$-submodule of $\Gamma$ isomorphic to $\Gamma^*/(\Sq^{2^n},\Sq^{2^{n+1}},\dots)$.
%
\item Similar to the $2$-primary proof just given.
\item If $x$ is a homogeneous element of $\Gamma$, then the $\Gamma^*$-submodule of $\Gamma$ generated by $x$ has the property that $\Sq^nx$ for all sufficiently high $x$, since $\Gamma$ is bounded above. 
\item Similar to the $2$-primary proof just given.
\end{enumerate}
\end{proof}

\begin{lemma}\label{dist ideals identification 2}
Let $\Gamma$ denote the $p$-primary dual Steenrod algebra, for some $p$. 
Then the distinguished local cohomology groups $H^n_{\dist}(\Gamma^*)$ vanish for all $n$. 
\end{lemma}
\begin{proof}
Since $\Gamma^*$ is bounded-below, it is injective by Theorem \ref{thm from margolis 2}. Hence $H^n_{\dist}(M)$ vanishes for $n>0$. Suppose that $x$ is a homogeneous element of $\Gamma^*$. Write $x$ as a $\mathbb{F}_p$-linear combination of elements in the Cartan-Serre admissible basis (I believe \cite{MR60234} and \cite{MR0087934} are the correct original references, but a textbook reference like \cite{MR0145525} is probably more convenient for a modern reader to find and consult). We use the convention that, at odd primes $p$, the symbol $\Sq_p^{2k(p-1)}$ denotes the $k$th Steenrod power $P^k$, and the symbol $\Sq_p^{2k(p-1)+1}$ denotes $\beta P^k$, the B ockstein times the $k$th Steenrod power.
There are only finitely many terms in that expression of $x$ as a linear combination of admissible monomials, and in each term, there are only finitely many $\Sq_p^n$ factors, so there exists a largest $n$ such that $\Sq_p^{n}$ appears in a monomial in our linear combination expressing $x$. Choose any $m$ divisible by $p-1$, and which is larger than $p\cdot n$. Then, since the admissible basis is a basis, multiplying each of the monomials in our linear combination expressing $x$ by $\Sq_p^m$ on the left yields a set of monomials which is still admissible. Hence that set of monomials is still linearly independent. Consequently $\Sq_p^m\cdot x$ is nonzero. Since this argument works for any sufficiently large $m$ divisible by $p-1$, 
\begin{itemize}
\item if $p=2$, it cannot be the case that $x$ is $(\Sq^{2^{\ell}},\Sq^{2^{\ell+1}},\Sq^{2^{\ell+2}},\dots)$-torsion for any $\ell$ at all, and
\item if $p>2$, it cannot be the case that $x$ is $(P^{p^{\ell}},P^{p^{\ell+1}},P^{p^{\ell+2}},\dots)$-torsion for any $\ell$ at all.
\end{itemize}
Hence, by Proposition \ref{dist ideals identification}, $x$ cannot be distinguished-torsion.
\end{proof}

\begin{theorem}\label{inf finitistic dim thm}
Suppose the field $A$ is countable, and suppose that the $A$-algebra $\Gamma^*$ is a finite-type $\mathcal{P}$-algebra. Suppose furthermore that $H^0_{\dist}(\Gamma^*)$ vanishes. Then the distinguished local-cohomological finitistic dimension of $\Gamma$, and the bounded-below distinguished local-cohomological finitistic dimension of $\Gamma$, are each infinite. 

That is, for each integer $n$, there exists some bounded-below graded $\Gamma^*$-module $M$ whose distinguished local-cohomological dimension is finite but greater than $n$. This module can furthermore be chosen to be finite-type.
\end{theorem}
\begin{proof}
Let $\mathfrak{m}$ denote the ideal of $\Gamma^*$ generated by all homogeneous elements of positive degree (i.e., $\mathfrak{m}$ is ``irrelevant ideal'' in the sense of the term from projective geometry). Then $\Gamma^*/\mathfrak{m}$ is bounded-above, and consequently is distinguished-torsion, by Theorem \ref{main thm 1}. Hence $H^n_{\dist}(\Gamma^*/\mathfrak{m})$ vanishes for $n>0$, by Corollary \ref{vanishing of higher dist coh on comodules}.
The fact that $\Gamma^*/\mathfrak{m}$ is distinguished-torsion also yields that $H^0_{\dist}(\Gamma^*/\mathfrak{m}) \cong \Gamma^*/\mathfrak{m}$, a one-dimensional vector-space over the ground field $A$.

 Write $M_0$ for $\Gamma^*/\mathfrak{m}$, and write $M_1$ for the kernel of the surjection $\Gamma^* \rightarrow \Gamma^*/\mathfrak{m}$, so that we have a short exact sequence
\begin{equation}
\label{ses 0400430}
 0 \rightarrow M_1 \rightarrow \Gamma^* \rightarrow M_0\rightarrow 0\end{equation}
of bounded-below graded $\Gamma^*$-modules.
The long exact sequence induced in $H^*_{\dist}$ by \eqref{ses 0400430}, together with the vanishing of $H^n_{\dist}(\Gamma^*)$ for all $n$ by Lemma \ref{dist ideals identification 2}, gives us that $H^n_{\dist}(M_1) \cong H^{n-1}_{\dist}(M_0)$ for all $n$. In particular, we have $H^1_{\dist}(M_1)\cong A$, and $H^n_{\dist}(M_1)$ vanishes for all $n\geq 1$. Note also that $M_1$, like $M_0$, is bounded-below and finite-type.

Now we carry out an inductive definition: suppose we have already constructed a bounded-below finite-type graded $\Gamma^*$-module $M_i$ such that $H^i_{\dist}(M_i)\cong A$ and such that $H^n_{\dist}(M_i)\cong 0$ for all $n\neq i$.
Choose a minimal set of homogeneous generators for $M_i$ as a graded $\Gamma^*$-module. Such a set of generators yields a surjective graded $\Gamma^*$-module homomorphism from some {\em bounded-below} finite-type free graded $\Gamma^*$-module $F_i$ to $M_i$. Let $M_{i+1}$ denote the kernel of that surjection $F_i \rightarrow M_i$.
Then the short exact sequence
\begin{equation*}
\label{ses 0400431}
 0 \rightarrow M_{i+1} \rightarrow F_i \rightarrow M_i\rightarrow 0\end{equation*}
of bounded-below graded finite-type $\Gamma^*$-modules induces a long exact sequence in $H^*_{\dist}$ and, by vanishing of $H^*_{\dist}(F_i)$ in all degrees due to Lemma \ref{dist ideals identification 2}, we have an isomorphism $H^n_{\dist}(M_{i+1})\cong H^{n-1}_{\dist}(M_i)$ for all $n$. 

That completes the induction. Consequently, for every nonnegative integer $i$, we have a bounded-below finite-type graded $\Gamma^*$-module $M_i$ such that $H^i_{\dist}(M_i)\cong A$ and such that $H^n_{\dist}(M_i)\cong 0$ for all $n\neq i$.
\end{proof}

\begin{corollary}\label{inf finitistic dim cor}
Let $\Gamma$ be the mod $p$ dual Steenrod algebra, for some prime $p$. 
Suppose the field $A$ is countable, and suppose that the $A$-algebra $\Gamma^*$ is a finite-type $\mathcal{P}$-algebra. Suppose furthermore that $H^0_{\dist}(\Gamma^*)$ vanishes.
Then the distinguished local-cohomological finitistic dimension of $\Gamma$, and the bounded-below distinguished local-cohomological finitistic dimension of $\Gamma$, are each infinite. 

That is, for each integer $n$, there exists some bounded-below graded $\Gamma^*$-module $M$ whose distinguished local-cohomological dimension is finite but greater than $n$. This module can furthermore be chosen to be finite-type.
\end{corollary}
\begin{proof}
Consequence of Lemma \ref{dist ideals identification 2} and Theorem \ref{inf finitistic dim thm}.
\end{proof}

Lemma \ref{dist ideals identification} also has the following consequence:
\begin{prop}
Let $A$ be the $p$-primary Steenrod algebra, for some prime $p$.
Then the following claims are each true:
\begin{itemize}
\item Let $p=2$. Then 
the functor $H^0_{\dist}:\gr\Mod(A)\rightarrow\grMod(A)$ is isomorphic to $\colim_{n\rightarrow\infty} \underline{\hom}_A\left( A/(\Sq^{2^n}, \Sq^{2^{n+1}}, \dots ),-\right)$. Consequently we have an isomorphism
\[ H^m_{\dist}(M) \cong \colim_{n\rightarrow\infty}\Ext^{m,*}_A\left( A/(\Sq^{2^n},\Sq^{2^{n+1}},\dots),M\right),\]
natural in the variable $M$.
\item Let $p>2$. Then 
the functor $H^0_{\dist}:\gr\Mod(A)\rightarrow\grMod(A)$ is isomorphic to $\colim_{n\rightarrow\infty} \underline{\hom}_A\left( A/(P^{p^n}, P^{p^{n+1}}, \dots ),-\right)$. Consequently we have an isomorphism
\[ H^m_{\dist}(M) \cong \colim_{n\rightarrow\infty}\Ext^{m,*}_A\left( A/(P^{p^n},P^{p^{n+1}},\dots),M\right),\]
natural in the variable $M$.
\item Let $p=2$. Then, given a set $\{ M_i:i\in I\}$ of graded $A^*$-comodules, the $n$th right derived functor $R^*\prod_{i\in I}^{A_*} \left(\left\{ M_i\right\}\right)$ of the product in the category of graded $A_*$-comodules is naturally isomorphic to 
\[\colim_{n\rightarrow\infty}\Ext^{m,*}_A\left( A/(\Sq^{2^n},\Sq^{2^{n+1}},\dots),\prod_{i\in I} \iota(M_i)\right), \]
where $\iota(M_i)$ is the graded $A$-module given by $M_i$ with the contragredient $A$-action, and $\prod_{i\in I} \iota(M_i)$ is the ordinary product in the category of graded $A$-modules.
\item Let $p>2$. Then, given a set $\{ M_i:i\in I\}$ of graded $A^*$-comodules, the $n$th right derived functor $R^*\prod_{i\in I}^{A_*} \left(\left\{ M_i\right\}\right)$ of the product in the category of graded $A_*$-comodules is naturally isomorphic to 
\[\colim_{n\rightarrow\infty}\Ext^{m,*}_A\left( A/(P^{p^n},P^{p^{n+1}},\dots),\prod_{i\in I} \iota(M_i)\right), \]
where again $\iota(M_i)$ is the graded $A$-module given by $M_i$ with the contragredient $A$-action, and $\prod_{i\in I} \iota(M_i)$ is the ordinary product in the category of graded $A$-modules.
\end{itemize}
\end{prop}
\begin{proof}
Let $p=2$. By Proposition \ref{dist ideals identification}, the ideals of the form $(\Sq^{2^n}, \Sq^{2^{n+1}}, \dots )$ are final in the partially-ordered set $\dist(M)$ of distinguished right ideals of $M$. Consequently the colimit over such ideals is isomorphic to the colimit over $\dist(M)$, yielding the first claim. The odd primary argument for the second claim is entirely similar. The third and fourth claims then follow from the first two claims, along with Theorem \ref{main thm 1}.
\end{proof}

\subsection{Derived products in comodules as colimits of $\Ext$ groups.}

Recall, from the discussion from \cref{Distinguished ideals and...}, that unlike the classical local cohomology $H^*_I(M)\cong\colim_{n\rightarrow\infty}\Ext^n_R(R/I^n,M)$, distinguished local cohomology is the derived functors of $H^0_{\dist}$, while $R^*h^0_{\dist}$ is a colimit of $\Ext$-groups, but when the coalgebra $\Gamma$ is not cocommutative, $H^0_{\dist}$ does not necessarily coincide with $h^0_{\dist}$. While $H^*_{\dist}$ has highly desirable properties (most importantly Theorem \ref{main thm 1}), its failure to agree with $R^*h^0_{\dist}$ makes it more difficult to compute. Our goal in this subsection is to find conditions on the coalgebra $\Gamma$ which yield a description of $H^*_{\dist}$ as a colimit of $\Ext$ groups. Of course our conditions need to be satisfied in the case where $\Gamma^*$ is the dual Steenrod algebra at any prime, and indeed, the conditions are satisfied in that case: see ggxx for the main result.

---

\begin{examples} Here are some of the examples of filtered ideal sets we will be concerned with.
\begin{itemize}
\item
Given a left $R$-module $\Theta$, the set $\dist \Theta$ defined in Definition-Proposition \ref{def of distinguished} is a filtered ideal set in $R$.  As a special case, when $R = \Gamma^*$ for some coalgebra $\Gamma$, we have the filtered ideal set $\dist \coloneqq \dist(\iota(\Gamma))$. 
\item 
When $R$ is concentrated in nonnegative degrees, we have the filtered ideal set $\grad = \{ I_0,I_1,I_2,I_3,I_4, \dots \}$, where $I_n$ is the homogeneous left ideal in $R$ generated by the homogeneous elements of degree $\geq n$. 
\end{itemize}
\end{examples}

\bibliography{/home/asalch/texmf/tex/salch}{}
\bibliographystyle{plain}
\end{document}

\subsection{The torsion theory associated to a set of homogeneous ideals.}

I have not seen the results in this subsection in any reference on torsion theories, but the ideas are elementary, and I suspect they are quite well-known to those who work with torsion theories, so I do not claim any kind of originality for the material in this subsection.
\begin{definition}\label{def of cofiltered}
Let $S$ be a partially-ordered set.
\begin{itemize}
\item We will say that $S$ is {\em cofiltered} if for every pair of elements $s_1,s_2$ in $S$, there exists an element $s \in S$ such that $s\leq s_1$ and $s\leq s_2$.
\item We will say that two elements $s_1,s_2$ of $S$ are {\em connectable} if there exists a finite sequence $t_1, t_2, \dots t_n$ of elements of $S$ such that $s_1 \geq t_1$ and $t_n\geq s_2$ and such that, for every odd $i\in \{ 1, \dots ,n-1\}$, we have $t_i \leq t_{i+1}$, and such that, for every even $i\in \{1, \dots ,n-1\}$, we have $t_i \geq t_{i+1}$. (In other words, $s_1$ and $s_2$ are connectable if there exists a finite zigzag of elements of $S$ connecting $s_1$ and $s_2$.)
\item 
We will say that $S$ is {\em connected} if every pair of elements of $S$ is connectable.
\end{itemize}
\end{definition}

\begin{definition}
Let $R$ be a graded ring, and let $\mathcal{I}$ be a set of homogeneous proper left ideals of $R$. Equip $\mathcal{I}$ with the partial ordering by inclusion. 
\begin{itemize}
\item We will say that $\mathcal{I}$ is {\em cofiltered} if $\mathcal{I}$, regarded as a partially-ordered set, is cofiltered.
\item Let $r_{\mathcal{I}}$ denote the functor $\gr\Mod(R)\rightarrow\gr\Mod(R)$ given by $r_{\mathcal{I}}(M) = \colim_{I \in\mathcal{I}} \underline{\hom}_R(R/RI,M)$. 
\item If $\mathcal{I},\mathcal{J}$ are sets of homogeneous left ideals of $R$ such that $\mathcal{J}\subseteq\mathcal{I}$, we get a natural transformation $r_{\mathcal{J}} \rightarrow r_{\mathcal{I}}$ given by left Kan extension. 
We will say that $\mathcal{I}$ and $\mathcal{J}$ are {\em torsion-equivalent} if $r_{\mathcal{J}} \rightarrow r_{\mathcal{I}}$ is a natural isomorphism. 
\end{itemize}
\end{definition}

\begin{prop}
Let $\mathcal{I}$ be a connected set of homogeneous proper left ideals of a graded ring $R$. Then $r_{\mathcal{I}}$ is a preradical. Furthermore, $r_{\mathcal{I}}$ is idempotent if and only if $\mathcal{I}$ is connected.
\end{prop}
\begin{proof}
Given a graded $R$-module $M$, an element $x$ of $\colim_{I\in\mathcal{I}}\underline{\hom}_R(R/RI,M) = r_{\mathcal{I}}(M)$ consists of a choice of homogeneous left $I$-torsion element $x_I$ of $M$ for each element $I$ in some cofinal subset of $\mathcal{I}$, such that, if $I_1,I_2\mathcal{I}$ satisfy $I_1\leq I_2$, then $x_{I_1} = x_{I_2}$. Consequently, whenever $I_1$ and $I_2$ are connectable, we have $x_{I_1} = x_{I_2}$. Since $\mathcal{I}$ was assumed to be connected, the situation simplifies: an element $x$ of $r_{\mathcal{I}}(M)$ corresponds to a homogeneous element $\overline{x}$ of $M$ which is $I$-torsion for every ideal $I$ in a cofinal subset of $\mathcal{I}$. We have an obvious inclusion of graded left $R$-modules $r_{\mathcal{I}}(M)\hookrightarrow M$, so $r_{\mathcal{I}}$ is a preradical. 

%
\end{proof}

\begin{prop}
Let $R$ be a graded ring. Then every connected set of homogeneous left ideals of $R$ is torsion-equivalent to a cofiltered set of homogeneous left ideals of $R$.
\end{prop}
\begin{proof}
Suppose $\mathcal{I}$ is a connected set of homogeneous left ideals of $R$. Let $\tilde{\mathcal{I}}$ be the set of homogeneous left ideals given by
\[ \tilde{\mathcal{I}} = \left\{ \cup_{I \in X}  I: X\mbox{\ is\ a\ finite\ subset\ of\ } \mathcal{I}\right\}.\]
That is, $\tilde{\mathcal{I}}$ is the set of all intersections of finite subsets of $\mathcal{I}$. It is easy to see that $\mathcal{I}\subseteq \tilde{\mathcal{I}}$, and that $\tilde{\mathcal{I}}$ is cofiltered. 

It takes only a bit more thought to see that $\tilde{\mathcal{I}}$ is torsion-equivalent to $\mathcal{I}$. The argument is as follows: given a graded $R$-module $M$, an element $x$ of $r_{\mathcal{I}}(M)$ consists of a choice of homogeneous left $I$-torsion element $x_I$ of $M$ for each element $I$ in some cofinal subset of $\mathcal{I}$, such that, if $I_1,I_2\mathcal{I}$ satisfy $I_1\leq I_2$, then $x_{I_1} = x_{I_2}$. Consequently, whenever $I_1$ and $I_2$ are finitely connectable, we have $x_{I_1} = x_{I_2}$. Since $\mathcal{I}$ was assumed to be finitely connected, the situation simplifies: an element $x$ of $r_{\mathcal{I}}(M)$ corresponds to a homogeneous element $\overline{x}$ of $M$ which is $I$-torsion for every ideal $I$ in a cofinal subset of $\mathcal{I}$. The image of $x$ in $r_{\tilde{\mathcal{I}}}(M)$ then corresponds to the same homogeneous element $\phi(\overline{x})$ of $M$. Consequently the map $\phi: r_{\mathcal{I}}(M)\rightarrow r_{\tilde{\mathcal{I}}}(M)$ is injective. To see surjectivity of $\phi$, choose an element $x\in r_{\mathcal{I}}(M)$, and let $\overline{x}$ be the corresponding homogeneous element of $M$ and $S$ a corresponding cofinal subset of $\mathcal{I}$ such that $\overline{x}$ is $I$-torsion for 
every $I\in S$. 


%

--

Suppose $\mathcal{I}$ is a set of homogeneous left ideals of $R$. Let $\mathcal{I}^{\prime}$ be the set of homogeneous left ideals given by
\[ \mathcal{I}^{\prime} = \left\{ I\cap J : I,J\in \mathcal{I} \mbox{\ are\ finitely\ connectable}\right\}.\]
Clearly we have $\mathcal{I}\subseteq \mathcal{I}^{\prime}$, since $I = I\cap I$ for every $I\in\mathcal{I}$. We claim that $\mathcal{I}$ and $\mathcal{I}^{\prime}$ are torsion-equivalent. The argument is as follows: given a graded $R$-module $M$, an element $x$ of $r_{\mathcal{I}}(M)$ consists of a choice of homogeneous left $I$-torsion element $x_I$ of $M$ for each element $I$ in some cofinal subset of $\mathcal{I}$, such that, if $I_1,I_2\mathcal{I}$ satisfy $I_1\leq I_2$, then $x_{I_1} = x_{I_2}$. Consequently, whenever $I_1$ and $I_2$ are finitely connectable, we have $x_{I_1} = x_{I_2}$. Hence, given an element of $r_{\mathcal{I}}(M)$, we can construct an element $y$ of $r_{\mathcal{I}^{\prime}}(M)$ by letting $y_{I\cap J} = x_I = x_J$ for each $I,J$ in the same cofinal subset of $\mathcal{I}$, since $x_I$ is an $I$-torsion element and it is equal to the $J$-torsion element $x_J$, so $x_I = x_J$ is in fact an $I\cap J$-torsion element. Restricting $y$ to the given cofinal subset of $\mathcal{I}$ recovers $x$, so the map 
\begin{equation}\label{map 304940945} r_{\mathcal{I}^{\prime}}(M) \rightarrow r_{\mathcal{I}}(M)\end{equation} is surjective. ggggggxxxxxxxxx MAPS GOING THE CORRECT WAY?

It is straightforward to show that the map \eqref{map 304940945} is injective: for an element $x\in r_{\mathcal{I}^{\prime}}(M)$ to map to zero under \eqref{map 304940945} is equivalent to there being some cofinal subset $S$ of $\mathcal{I}$ such that $x_I$ is trivial for all $I\in S$. This in turn implies that $x_I$ vanishes for all $I$ in every finitely connectable component of $\mathcal{I}$, and consequently that $x_I$ vanishes for all $I$ in every finitely connectable component of $\mathcal{I}^{\prime}$, since by construction $\mathcal{I}^{\prime}$ has the same finitely connectable components as $\mathcal{I}$. 

Consequently we have a sequence of inclusions
\[ \mathcal{I} \subseteq \mathcal{I}^{\prime} \subseteq 
 \mathcal{I}^{\prime\prime} \subseteq 
 \mathcal{I}^{\prime\prime\prime} \subseteq \dots\]
each of which is a torsion-equivalence. Our claim is that the union of the sequence, $\cup_m \mathcal{I}^{(m)}$, is weakly cofiltered and torsion-equivalent to $\mathcal{I}$. This claim follows immediately from two sub-claims:
\begin{enumerate}
\item Suppose that \begin{equation}\label{seq 04999494}\mathcal{I}_0 \subseteq \mathcal{I}_1 \subseteq \mathcal{I}_2 \subseteq \dots\end{equation} is a sequence of torsion-equivalences of sets $\mathcal{I}_0, \mathcal{I}_1, \mathcal{I}_2,\dots $ of left homogeneous ideals of $R$, and suppose that, for each positive integer $n$, there exists a positive integer $n^{\prime}\geq n$ such that $\mathcal{I}_{n^{\prime}}$ contains $\mathcal{I}_n^{\prime}$. Then the union $\cup_m \mathcal{I}_m$ is weakly cofiltered. 
\item If we have a sequence of torsion-equivalences as in \eqref{seq 04999494}, then 
$\mathcal{I}_0$ is torsion-equivalent to the union $\cup_m \mathcal{I}_m$ of the sequence.
\end{enumerate}
We prove these claims in turn:
\begin{enumerate}
\item Given a finitely connectable pair $I,J\in \cup_m\mathcal{I}_m$, there exists an integer $N_I$ such that $I\in \mathcal{I}_{N_I}\subseteq \cup_m\mathcal{I}_m$, and an integer $N_J$ such that $J\in \mathcal{I}_{N_b}\subseteq \cup_m\mathcal{I}_m$. 
Since $I,J$ are finitely connectable, there exists finite sequence $t_1, \dots ,t_n$ in $\cup_m\mathcal{I}_m$ as in Definition \ref{def of cofiltered}. For each $i\in \{ 1, \dots ,n\}$, choose an integer $N_i$ such that $t_i \in \mathcal{I}_{N_i}\subseteq \cup_m\mathcal{I}_m$. Finally, let $M$ denote the maximum $M = \max\{ N_I,N_J, N_1, N_2, \dots ,N_n\}$. Then each of the elements $I,J,t_1, \dots ,t_n$ exists in $\mathcal{I}_{M}$, and the sequence $t_1, \dots ,t_n$ gives us that $I$ and $J$ are finitely connectable in $\mathcal{I}_M$. By the definition of $\mathcal{I}_M^{\prime}$, we have that $I$ and $J$ have an upper bound in $\mathcal{I}_{M}^{\prime} = \mathcal{I}_{M^{\prime}}$, so $I,J$ also have an upper bound in $\cup_n\mathcal{I}_n$.
\item For any graded $R$-module $X$, we have natural (in the variable $X$) isomorphisms
\begin{align}
\nonumber r_{\cup_n\mathcal{I}_n}(X) 
  &= \colim_{I\in \cup_n\mathcal{I}_n} \underline{\hom}_R(R/RI,X) \\
\nonumber  &= \colim_{I\in \colim_n \mathcal{I}_n} \underline{\hom}_R(R/RI,X) \\
\label{colims commute w colims}  &= \colim_n \left( \colim_{I\in \mathcal{I}_n} \underline{\hom}_R(R/RI,X)\right) \\
\nonumber  &= \colim_n r_{\mathcal{I}_n}(X),
\end{align}
where \eqref{colims commute w colims} (ggggxxxxx HANDLE THIS MORE CAREFULLY! THE MAPS ARE NOT OBVIOUSLY GOING IN THE CORRECT DIRECTION FOR THIS ARGUMENT) is simply due to the classical fact, from category theory, that colimits commute with colimits. The sequence of graded $R$-module homomorphisms
\[ r_{\mathcal{I}_0}(X) \rightarrow ggxx
\end{enumerate}

All that is left is to see that $\mathcal{I}^{\prime}$ is weakly cofiltered. gggggxxxxx

\end{proof}

\bibliography{/home/asalch/texmf/tex/salch}{}
\bibliographystyle{plain}
\end{document}

\subsection{Comodules are the torsion class in a hereditary torsion theory on modules.}

A graded $\Gamma^*$-module $M$ is distinguised-torsion (as defined in Definition-Proposition \ref{def of distinguished}) if, for every homogeneous $m \in M$, there exists some distinguished left ideal $I$ of $\Gamma^*$ such that $im=0$ for all $i\in I$.
It is easy to see that $H^0_{\dist}$ is an idempotent preradical and that $\mathcal{T}_{H^0_{\dist}}$ is simply the category of distinguished-torsion graded $\Gamma^*$-modules, which is certainly closed under subobjects in $\gr\Mod(\Gamma^*)$. Consequently, by Theorem \ref{stenstrom thm}, $H^0_{\dist}$ is left exact. It takes only a bit more thought to verify that $H^0_{\dist}$ is not only a preradical but in fact a radical, that is, for any graded $\Gamma^*$-module $M$, there are no nonzero distinguished-torsion elements of the quotient $M/H^0_{\dist}(M)$. Definition-Proposition \ref{def of distinguished} then gives us that the distinguished-torsion graded $\Gamma^*$-modules are the torsion class of a hereditary torsion theory on graded $\Gamma^*$-modules.

\begin{lemma}\label{subobjs quots and exts lemma}
Let $\mathcal{C}$ be an abelian category, let $U: \mathcal{C}\rightarrow\gr\Ab$ be a faithful exact functor, let $H: \mathcal{C}\rightarrow\gr\Ab$ be a left exact functor, and let $\eta: H \rightarrow U$ be a natural transformation such that $\eta M: HM \rightarrow UM$ is monic for all objects $M$ of $\mathcal{C}$. We refer to an object $M$ of $\mathcal{C}$ as {\em $\eta$-complete} if $\eta M: HM \rightarrow UM$ is an isomorphism. Then each of the following are true:
\begin{enumerate}
\item Every subobject of an $\eta$-complete object is $\eta$-complete.
\item Every quotient object of an $\eta$-complete object is $\eta$-complete.
\item If we furthermore assume that $U$ commutes with coproducts, then every coproduct of $\eta$-complete objects is $\eta$-complete.
\item If we furthermore have that the first right-derived functor $R^1H(M)$ vanishes for all $\eta$-complete objects $M$, then every extension of an $\eta$-complete object by an $\eta$-complete object is $\eta$-complete.
\end{enumerate}
\end{lemma}
\begin{proof}\leavevmode
\begin{description}
\item[1 and 2]
Given a short exact sequence 
\[ 0 \rightarrow A^{\prime} \rightarrow A \rightarrow A^{\prime\prime}\rightarrow 0\]
in $\mathcal{C}$, we get a commutative diagram with exact rows
\begin{equation}\label{comm diag 094393}
\xymatrix{
0 \ar[r]\ar[d] 
 & HA^{\prime} \ar@{^{(}->}[d]_{\eta A^{\prime}} \ar[r] 
 & HA \ar@{^{(}->}[d]_{\eta A} \ar[r]
 & HA^{\prime\prime} \ar@{^{(}->}[d]_{\eta A^{\prime\prime}} 
 & \\
0 \ar[r] 
 & UA^{\prime} \ar[r] 
 & UA^{} \ar[r] 
 & UA^{\prime\prime} \ar[r] 
 & 0.
}\end{equation}
If $A$ is $\eta$-complete, then an easy diagram chase in \eqref{comm diag 094393} shows that $A^{\prime\prime}$ is $\eta$-complete, and an application of the Four Lemma, from elementary homological algebra, to \eqref{comm diag 094393} then yields that $A^{\prime}$ is $\eta$-complete.
\item[3] Let $\{ M_i: i\in I\}$ be a set of objects of $\mathcal{C}$. We have the commutative square of graded abelian groups
\begin{equation}\label{comm sq 043040}\xymatrix{
 \coprod_i HM_i \ar@{^{(}->}[r]_{\coprod_i \eta M_i} \ar[d] 
  & \coprod_i UM_i \ar[d] \\
 H\coprod_i M_i \ar@{^{(}->}[r]_{\eta \coprod_iM_i} 
  & U\coprod_i M_i .
}\end{equation}
The right-hand vertical map in \eqref{comm sq 043040} is an isomorphism since $U$ is assumed to commute with coproducts, while $\coprod_i \eta M_i$ is an isomorphism since each $M_i$ is $\eta$-complete. So the top composite in square \eqref{comm sq 043040} is an isomorphism, and in particular, surjective, hence $\eta \coprod_iM_i$ is surjective, hence an isomorphism, 
i.e., $\coprod_iM_i$ is $\eta$-complete.
\item[4] If, on the other hand, $A^{\prime}$ and $A^{\prime\prime}$ are $\eta$-complete, then the vanishing of $R^1HA^{\prime}$ together with the Five Lemma, applied to \eqref{comm diag 094393}, yields that $A$ is also $\eta$-complete.
\end{description}
\end{proof}

ggxx
\begin{prop}\label{filteredness prop}
Suppose that the $A$-coalgebra $\Gamma$ is projective. 
Then the following statements are all true:
\begin{enumerate}
\item For each integer $n$, we have an isomorphism of functors between $H^n_{\dist}$ and $\colim_{I\in\dist(\Gamma)}\Ext^n_{\Gamma^*}(\Gamma^*/\Gamma^* I,-)$. 
\item The functor $H^0_{\dist}$ is left exact.
\item If $\Gamma$ is co-commutative and $\Gamma^*$ is Noetherian, then the functor $H^0_{\dist}: \gr\Mod(\Gamma^*)\rightarrow\gr\Mod(\Gamma^*)$ is idempotent.
\item If $M$ is a distinguished-torsion graded $\Gamma^*$-module, then every graded $\Gamma^*$-submodule of $M$ is also distinguished-torsion.
\item If $M$ is a distinguished-torsion graded $\Gamma^*$-module, then every graded $\Gamma^*$-module quotient of $M$ is also distinguished-torsion.
\item Every coproduct of distinguished-torsion graded $\Gamma^*$-modules is distinguished-torsion.
\item Suppose furthermore that $H^1_{\dist}$ vanishes on every distinguished-torsion graded $\Gamma^*$-module\footnote{See Corollary \ref{vanishing of higher dist coh on comodules} for some reasonable conditions on $\Gamma$ so that $H^1_{\dist}$ does indeed vanish on every distinguished-torsion graded $\Gamma^*$-module. These conditions are general enough to include, for example, the mod $p$ Steenrod algebra for every prime $p$.}. If $M$ is an extension of a distinguished-torsion graded $\Gamma^*$-module by a distinguished-torsion graded $\Gamma^*$-module, then $M$ is also distinguished-torsion.
\end{enumerate}
\end{prop} 
\begin{proof}\leavevmode
\begin{enumerate}
\item This is routine, but we give a proof anyway, to show that the result does not require any condition on $\underline{\hom}_{\Gamma^*}(\Gamma^*/\Gamma^* I,-)$ commuting with filtered colimits, which would require additional Noetherian-like hypotheses. 
Write $H^0_{\dist}: \gr\Mod(\Gamma^*) \rightarrow \gr\Ab$ as the composite of the functors 
\begin{align*}
 \mathcal{h}: \gr\Mod(\Gamma^*) &\rightarrow \gr\Ab^{\dist(\Gamma)}  \\
 \mathcal{h}(M)(I) &= \underline{\hom}_{\Gamma^*}\left( \Gamma^*/\Gamma^* I,M\right) ,\mbox{\ and} \\
 \colim: \gr\Ab^{\dist(\Gamma)} &\rightarrow \gr\Ab
\end{align*}
Here we are using the standard notation $\mathcal{C}^{\mathcal{B}}$ to denote the category of functors $\mathcal{B}\rightarrow\mathcal{C}$, for given categories $\mathcal{B}$ and $\mathcal{C}$.


Since we have shown that $\dist(\Gamma)$ is filtered (NOT ANYMORE!), we have that the functor $\colim$ is exact, and in particular, left-exact. Exactness of $\colim$ furthermore tells us that $\mathcal{h}$ sends injectives to $\colim$-acyclics, so the Grothendieck spectral sequence
\[ E_2^{s,t}\cong R^s\colim (R^t\mathcal{h}(M)) \Rightarrow R^{s+t}(\colim\circ \mathcal{h})(M) \cong (R^{s+t}H^0_{\dist})(M)\]
exists and furthermore collapses to the $(s=0)$-line, given by \[ E_2^{0,t}\cong \colim_{I\in\dist(\Gamma)}\left(R^t\underline{\hom}_{\Gamma}(\Gamma^*/\Gamma^* I,-)\right)(M),\] as desired.
\item The functor $\underline{\hom}_{\Gamma^*}(\Gamma^*/\Gamma^* I,-)$ is left exact, and a filtered colimit of left exact functors $\gr\Mod(\Gamma^*)\rightarrow \gr\Ab$ is left exact, since the category of graded $\Gamma^*$-modules satisfies Grothendieck's axiom AB5.
\item When $\Gamma^*$ is commutative and Noetherian, we have isomorphisms
\begin{align}
\nonumber H^0_{\dist}H^0_{\dist}(M) 
  &\cong \colim_{I\in\dist(\Gamma)}\underline{\hom}_{\Gamma^*}\left( \Gamma^*/I, \colim_{J\in\dist(\Gamma)}\underline{\hom}_{\Gamma^*}\left( \Gamma^*/J, M\right)\right) \\
\label{iso 409949}  &\cong \colim_{I\in\dist(\Gamma)}\colim_{J\in\dist(\Gamma)}\underline{\hom}_{\Gamma^*}\left( \Gamma^*/I, \underline{\hom}_{\Gamma^*}\left( \Gamma^*/J, M\right)\right) \\
\nonumber  &\cong \colim_{(I,J)\in\dist(\Gamma)\times\dist(\Gamma)}\underline{\hom}_{\Gamma^*}\left( (\Gamma^*/I) \otimes_{\Gamma^*} (\Gamma^*/J),M\right) \\
\nonumber  &\cong \colim_{(I,J)\in\dist(\Gamma)\times\dist(\Gamma)}\underline{\hom}_{\Gamma^*}\left( \Gamma^*/(I+J),M\right) \\
\nonumber  &\cong \colim_{I\in\dist(\Gamma)\times\dist(\Gamma)}\underline{\hom}_{\Gamma^*}\left( \Gamma^*/I,M\right) \\
\nonumber  &\cong H^0_{\dist}(M),
\end{align}
natural in $M$. Isomorphism \eqref{iso 409949} is the one that relies on $\Gamma^*$ being Noetherian, so that $\Gamma^*/I$ is finitely presented, and not merely finitely generated.)
\item[(6 through 9)] Consequence of Lemma \ref{subobjs quots and exts lemma}, using $H^0_{\dist}$ for $H$ and the forgetful functor $\gr\Mod(\Gamma^*)\rightarrow\gr\Ab$ for $U$.
\end{enumerate}
\end{proof}
In light of the left-exactness of $H^0_{\dist}$, we refer to the $n$th right derived functor $H^n_{\dist} = R^nH^0_{\dist}: \gr\Mod(\Gamma^*) \rightarrow \gr\Ab$ as the {\em $n$th distinguished local cohomology group}.

\section{Comodules as a torsion class in modules.}

\begin{theorem}\label{torsion class thm}
 Suppose that the $A$-coalgebra $\Gamma$ is projective as an $A$-module and has property $\Cyc$. 
Then the distinguished-torsion graded $\Gamma^*$-modules are a torsion class in 
the category of graded $A$-modules.
The corresponding torsion theory is stable if we also assume that $A$ is a countable field and that $\Gamma^*$ is finite-type and connected.
\end{theorem}
\begin{proof}
That the distinguished-torsion modules form a torsion class is a consequence of Proposition \ref{filteredness prop} and Definition-Proposition \ref{def-prop on torsion theories}.
Stability is a consequence of claims (1) and (3) in Theorem \ref{main thm 1}: we identify graded $\Gamma$-comodules with distinguished-torsion graded $\Gamma^*$-modules, so that the fact that $\iota$ preserves injectives gives us that each distinguished-torsion graded $\Gamma^*$-module embeds in an injective graded $\Gamma^*$-module which is distinguished-torsion, so in particular, the injective hull of that module is also distinguised-torsion.
\end{proof}

Theorem \ref{torsion class thm} together with Proposition \ref{dickson thm on stability} yield the following corollary:
\begin{corollary}\label{vanishing of higher dist coh on comodules}
Suppose that $A$ is a countable field, and that $\Gamma^*$ is finite-type and connected and has property $\Cyc$.
Then, for all graded $\Gamma$-comodules $M$ and all positive integers $n$, we have $H^n_{\dist}(\iota(M)) \cong 0$.
\end{corollary}

\subsection{Duals of commutative Hopf $\mathcal{P}$-algebras and co-commutative Hopf $\mathcal{P}$-algebras have property $\Cyc$.}

In Definition \ref{def of strong p-alg} we introduce {\em strong $\mathcal{P}$-algebras}, which are $\mathcal{P}$-algebras satisfying an additional axiom. It is easy to see that a graded exterior algebra on an infinite sequence of generators, only finitely many in each degree, is a strong $\mathcal{P}$-algebra. A much more nontrivial example is given by the Steenrod algebra, at every prime: Proposition 7 from section 15.1 of \cite{MR738973} establishes that the Steenrod algebras are $\mathcal{P}$-algebras, while Theorem A from \cite{MR793186} establishes that the Steenrod algebras also satisfy the extra condition in the definition of a {\em strong} $\mathcal{P}$-algebra.

The notion of a strong $\mathcal{P}$-algebra is used only in the proof of Lemma \ref{partial dual orientations lemma}, but that lemma plays an essential role in our argument (in Proposition \ref{comm hopf algebras with p-alg duals are cyc}) that the Steenrod algebras have property $\Cyc$. I do not know any example of a $\mathcal{P}$-algebra which fails to be strong. Perhaps it is possible to show that all $\mathcal{P}$-algebras are strong, or to show that one can prove Lemma \ref{partial dual orientations lemma} without strongness of a $\mathcal{P}$-algebra; either would let us do away with the idea of strongness of a $\mathcal{P}$-algebra entirely.
\begin{definition}\label{def of strong p-alg}
A {\em strong $\mathcal{P}$-algebra} is a union $B$ of a sequence of subalgebras $B(0) \subsetneq B(1) \subsetneq \dots$ such that:
\begin{itemize}
\item each $B(n)$ is a Poincar\'{e} algebra,
\item each $B(n+1)$ is flat over $B(n)$,
\item and each $B(n)$ admits a compatible self-dual\footnote{Since $B(n)$ is a Poincar\'{e} algebra, we have an isomorphism of graded $B(n)$-modules $f: \Sigma^d \underline{\hom}_{B(n)}(B(n),A) \stackrel{\cong}{\longrightarrow} B(n)$, where $d$ is the top nontrivial degree in $B(n)$. Saying that a given graded $B$-module structure on $B(n)$ is {\em self-dual} is saying that the isomorphism $f$ extends to an isomorphism of graded $B$-modules $\Sigma^d \underline{\hom}_{B}(B(n),A) \stackrel{\cong}{\longrightarrow} B(n)$. {\em Compatibility} of the self-dual $B$-module structures is the condition that $B \cong \lim_n B(n)$ as graded $B$-modules. See \cite{MR793186} for this notion of compatible self-duality.} graded $B$-module structure extending its $B(n)$-module structure.
\end{itemize}
\end{definition}

\begin{definition}
Suppose that $A$ is a field and the dual algebra 
$\Gamma^*$ of $\Gamma$ is a $\mathcal{P}$-algebra. We will say that a homogeneous element $x\in \Gamma$ is {\em a partial dual orientation class} if there exists a 
sequence of subalgebras \begin{equation}\label{seq 04040} B(0) \subsetneq B(1) \subsetneq \dots\end{equation} of $\Gamma^*$, as in Definition \ref{def of p-alg}, and an integer $i$ such that the image of $x$ under the canonical surjection $\Gamma\rightarrow B(i)^*$ is dual to the top degree class\footnote{Since $B(i)$ is a Poincar\'{e} algebra, in its top degree it is a one-dimensional vector space, so when we write ``the top degree class in $B(i)$,'' the element so described is well-defined up to scalar multiplication.} in $B(i)$. 
\end{definition}

\begin{lemma}\label{partial dual orientations lemma}
Suppose that $A$ is a field and the dual algebra 
$\Gamma^*$ of $\Gamma$ is a strong $\mathcal{P}$-algebra. Let $j$ be an integer. Then there exists some partial dual orientation class $x\in \Gamma$ such that:
\begin{itemize}
\item the graded $\Gamma^*$-submodule $\langle x\rangle$ of $\iota\Gamma$ generated by $x$ contains every homogeneous element of $\iota\Gamma$ of degree $\geq j$, and
\item $x$ co-annihilates every homogeneous element of $\langle x\rangle$. (gggggxxxxx THIS ISN'T TRUE, AND THIS APPROACH WON'T WORK AS STATED!)
\end{lemma}
\begin{proof}
Since $\iota\Gamma$ is trivial in positive degrees, the case of $j$ negative is the important case.
Since the union of the sequence \eqref{seq 04040} is $\Gamma^*$ and since $\Gamma^*$ is finite-type, there exists some integer $h$ such that every homogeneous element of $\Gamma^*$ of degree $\leq -j$ is contained in $B(h)$.
Write $d$ for the top nonvanishing degree of $B(h)$, and choose any nonzero homogeneous element $\alpha$ in degree $d$ in the $A$-linear dual $B(h)^*$ of $B(h)$.
Since $B(h)$ is a Poincar\'{e} algebra, $B(h)$ is a one-dimensional $A$-vector space in its top nonvanishing degree, and so $\alpha$ is unique up to $A$-linear multiplication.ggxx

The compatibility condition in Definition \ref{def of strong p-alg} guarantees that we have an isomorphism $\colim_h B(h)^* \rightarrow \iota\Gamma$ of graded $\Gamma^*$-modules. Write $\overline{\alpha}$ for the image of $\alpha\in B(h)^*$ under the resulting composite
\[ B(h)^* \rightarrow \colim_h B(h)^* \stackrel{\cong}{\longrightarrow} \iota\Gamma.\]
The graded $\Gamma^*$-submodule $\langle \overline{\alpha}\rangle$ of $\iota\Gamma$ generated by $\overline{\alpha}$ is the image, in $\iota\Gamma$, of the graded $\Gamma^*$-submodule of $B(h)^*$ generated by $\alpha$. This last graded $\Gamma^*$-submodule is simply $B(h)^*$ itself, by self-duality. So every element of $\iota\Gamma$ of degree $\geq j$ is in $\langle\overline{\alpha}\rangle$. The composite $B(h)^*\rightarrow \iota\Gamma \rightarrow B(h)^*$ is the identity on $B(h)^*$, so $\overline{\alpha}$ is a partial dual orientation class, as desired.

We still must show that $\overline{\alpha}$ co-annihilates every homogeneous element of $\langle \overline{\alpha}\rangle$. Suppose that $m$ is a homogeneous element of $\langle \overline{\alpha}\rangle$, i.e., $m = \tilde{m} \overline{\alpha}$ for some homogeneous $\tilde{m}\in \Gamma^*$, and suppose that $\gamma \overline{\alpha} = 0$. By self-duality, we have the isomorphism of $\Gamma^*$-modules $\langle \overline{\alpha}\rangle \cong B(h)$, so $\gamma\overline{\alpha} = 0$ implies that $\gamma\cdot 1 = 0$ in the $\Gamma^*$-module structure on $B(h)$ given by Definition \ref{def of strong p-alg}.
\end{proof}

\begin{prop}\label{comm hopf algebras with p-alg duals are cyc}
Suppose that $A$ is a field and that the $A$-coalgebra $\Gamma$ has the structure of a commutative Hopf algebra such that $\Gamma^*$ is a finite-type $\mathcal{P}$-algebra. Then $\Gamma$ has property $\Cyc$.
\end{prop}
\begin{proof}
Let $M$ be a finitely generated graded $\Gamma^*$-submodule of $\iota\Gamma$. By Lemma \ref{partial dual orientations lemma}, there exists a sequence of subalgebras $B(0) \subsetneq B(1) \subsetneq \dots$ as in Definition \ref{def of p-alg} and a partial dual orientation class $x\in \Gamma$ such that the graded $\Gamma^*$-submodule $\langle x\rangle$ of $\iota\Gamma$ generated by $x$ contains every homogeneous element of $\iota\Gamma$ of degree $\geq j$. We need to know that $x$ co-annihilates every homogeneous element of $\langle x \rangle$. To that end, suppose that $\gamma$ is a homogeneous element of $\Gamma^*$ such that $\gamma x = 0$, and suppose that $n$ is a homogeneous element of $\langle x\rangle$. 
Then $n = gx$ for some homogeneous $g\in \Gamma^*$. Writing $\nabla$ for the multiplication map $\nabla: \Gamma^* \otimes_A \Gamma^*\rightarrow\Gamma^*$ and $\tau$ for the factor swap map $\tau: \Gamma^* \otimes_A \Gamma^*\rightarrow \Gamma^* \otimes_A \Gamma^*$, we have
\begin{align}
\label{cocomm eq 0}  \gamma n
 &= \gamma gx \\
\nonumber  &= \nabla(\gamma \otimes g)(\Delta(x)) \\
\label{cocomm eq 1}  &= (\nabla)(\gamma \otimes g)(\tau\circ \Delta(x)) \\
\nonumber  &= (\nabla\circ \tau )(g\otimes \gamma)(\Delta(x)) \\ ggggxxxxx FIX THIS ISSUE! LOOK AT MILNOR IDENTITY AT BOTTOM OF PG 156
  &= (\nabla)(g \otimes \gamma)(\Delta(x)) \\
\label{cocomm eq 2}  &= g\gamma x
\nonumber  &= 0,
\end{align}
where \eqref{cocomm eq 1} is due to the co-commutativity of $\Gamma^*$. So $\gamma n=0$, as desired, so $\langle x\rangle$ is self-co-annihilating, as desired.
\end{proof}

\begin{corollary}\label{comm hopf algebras with p-alg duals are cyc}
For every prime $p$, the mod $p$ dual Steenrod algebra has property $\Cyc$.
\end{corollary}

\begin{prop}\label{cocomm hopf algebras with p-alg duals are cyc}
Suppose that $A$ is a field and that the $A$-coalgebra $\Gamma$ has the structure of a co-commutative Hopf algebra such that $\Gamma^*$ is a finite-type $\mathcal{P}$-algebra. Then $\Gamma$ has property $\Cyc$.
\end{prop}
\begin{proof}
Same argument as in Proposition \ref{comm hopf algebras with p-alg duals are cyc}, but the equalities between \eqref{cocomm eq 0} and \eqref{cocomm eq 2} are unnecessary, since we have $\gamma gx = g\gamma x = 0$ by the commutativity of $\Gamma^*$.
\end{proof}

\begin{corollary}\label{cocomm hopf algebras with p-alg duals are cyc}
For every prime $p$, the subalgebra of the mod $p$ Steenrod algebra generated by Milnor's primitives $Q_0,Q_1,\dots$ has property $\Cyc$.
\end{corollary}

\bibliography{/home/asalch/texmf/tex/salch}{}
\bibliographystyle{plain}
\end{document}

\begin{prop}
Suppose that $k$ is a field, and suppose that $\Gamma$ is a $k$-coalgebra such that the graded ring $\Gamma^*$ is connected, that is, $\Gamma^*$ is concentrated in nonnegative degrees, and isomorphic to $k$ in degree zero\footnote{Equivalently, by our grading conventions, $\Gamma$ is concentrated in nonpositive grading degrees, and its degree zero summand is a one-dimensional $k$-vector space.}. Then the following claims are each true:
\begin{enumerate}
\item The double-dual $\Gamma^{**}$ is injective as a graded right $\Gamma^*$-module.
\item If $\Gamma$ is finite-type\footnote{That is, for each integer $n$, the degree $n$ summand of $\Gamma$ is a finite-dimensional $k$-vector space.}, then $\Gamma$ is injective as a graded $\Gamma^*$-module.
\item Every bounded-above graded $\Gamma^*$-module is rational.
\end{enumerate}
\end{prop}
\begin{proof}\leavevmode
\begin{enumerate}
\item It is standard and straightforward to show that, if $A$ is a graded $k$-algebra concentrated in nonnegative degrees, then the dual graded $A$-module $A^*$ is injective; see Proposition 12 in section 11.3 of \cite{MR738973}, for example. In this case we simply let $A = \Gamma^*$.
\item When $\Gamma$ is finite-type, the canonical map $\Gamma \rightarrow \Gamma^{**}$ is an isomorphism of graded $\Gamma^*$-modules, so this part of the proposition follows from the previous part of this proposition.
\item First, a lemma: let $M$ be a bounded (i.e., bounded both above and below) graded $\Gamma^*$-module, and let $I$ be an injective graded $\Gamma^*$-module which contains gggggggxxxxxxxx

. Let $\underline{m}$ denote the ``irrelevant ideal'' of $\Gamma^*$, that is, the ideal of $\Gamma^*$ generated by all homogeneous elements of positive degree. For each nonnegative integer $n$, write $M[\underline{m}^n]$ for the graded sub-$\Gamma^*$-module of $M$ generated by the homogeneous $\underline{m}^n$-torsion elements of $M$. In other words, $M[\underline{m}^n]$ is simply $\underline{\hom}_{\Gamma^*}(\Gamma^*/\underline{m}^n,M)$. 

Since $M$ is assumed bounded, there exists some $n$ such that $\mathfrak{m}^{n+1}M = 0$.
Consequently $\mathfrak{m}^nM$ is a $\Gamma^*/\underline{m}$-module. Since $\Gamma^*$ is connected, $\Gamma^*/\underline{m}\cong k$, ggxx

gggxxx
By contrapositive: 
let $M$ be a bounded-above graded right $\Gamma^*$-module, and suppose that $M$ is not distinguished-torsion. Then there exists a homogeneous element $m\in M$ such that $Im \neq 0$ for all distinguished right ideals $I$ of $\Gamma^*$.


\end{enumerate}
\end{proof}

The following theorem appears as Corollary 3.3 in \cite{MR3565424}, and---restricted to the category of $R$-modules for a ring $R$---as Proposition 60.22 in \cite{MR880019}.
\begin{theorem}\label{stable exact tors theories are quasistable}
If $(\mathcal{T},\mathcal{F})$ is a stable exact 
 torsion theory on a Grothendieck category $\mathcal{C}$, then the inclusion $\mathcal{T}\hookrightarrow\mathcal{C}$ has a right adjoint, and the composite $\Gamma: \mathcal{C}\rightarrow\mathcal{T}\rightarrow\mathcal{C}$ is left exact, with right derived functors $R^n\Gamma: \mathcal{C}\rightarrow\mathcal{C}$ vanishing for all $n>1$.
\end{theorem}

The following theorem appears as Proposition 26.3 in \cite{MR880019}:
\begin{theorem}
If $R$ is a ring and $(\mathcal{T},\mathcal{F})$ is a stable 
torsion theory on the category of $R$-modules, then its associated localization functor $L$ is given by $L(M) \cong \colim_{I} \hom_R(I,M)$, where the colimit is taken over all right ideals $I$ of $R$ such that $R/RI$ is in $\mathcal{T}$.
\end{theorem}


The above results offer the conclusions which we claim hold for comodules over the dual Steenrod algebra, but their hypotheses are not quite right for our setting, since the torsion class of distinguished-torsion modules over the Steenrod algebra is not stable (see gx INSERT INTERNAL REF). A weakening of the stability condition was considered in \cite{MR3849888} and in \cite{MR4026630}: there, a torsion theory $(\mathcal{T},\mathcal{F})$, on the category of modules over a ring $R$, is called {\em weakly stable} if, for every injective $R$-module $E$, the $R$-module $\Gamma_{\mathcal{T}}(E)$ is $\Gamma_{\mathcal{T}}$-acyclic. Here $\Gamma_{\mathcal{T}}:\mathcal{C}\rightarrow\mathcal{C}$ is the $\mathcal{T}$-torsion functor given by the composite of the inclusion $\mathcal{T}\hookrightarrow\mathcal{C}$ with its right adjoint $\mathcal{C}\rightarrow\mathcal{T}$. In Lemma 3.12 of \cite{MR4026630}, it is proven that, if the $n$th right-derived functor $R^n\Gamma_{\mathcal{T}}$ vanishes for $n>1$ for a given torsion theory $(\mathcal{T},\mathcal{F})$, then $(\mathcal{T},\mathcal{F})$ is weakly stable. It follows from (gx INSERT INTERNAL REF) that the distinguished-torsion modules over the Steenrod algebra form a weakly stable torsion class. However, it seems that there is no easy way to use known results on weakly stable torsion classes---or any other weakening of the stability condition on torsion classes---to prove the results of this paper on vanishing of $H^n_{\dist}$ for $n>0$ for modules over the Steenrod algebra.

Finally, here is a highly convenient result of Dickson's, a special case of Theorem 2 from \cite{MR191935} (also see Example 60.1 of \cite{MR880019} for a statement of the result in terms close to what we state here):
\begin{theorem}\label{dickson acyclicity}
Let $R$ be a ring, let $(\mathcal{T},\mathcal{F})$ be a torsion theory on the category of $R$-modules, and let $M$ be an object of $\mathcal{T}$. Then $R^n\Gamma_{\mathcal{T}}(M)$ vanishes for all $n>0$.
\end{theorem}

\section{Distinguished local cohomology and derived functors of products in comodule categories.}

\begin{definition-proposition}
Suppose that $\Gamma$ is projective as an $A$-module and has property $\Cyc$. By the {\em ideal transform of $\Gamma$} we mean the left exact functor $D: \gr\Mod(\Gamma^*)\rightarrow\gr\Mod(\Gamma^*)$ which sends a graded $\Gamma^*$-module $M$ to $\colim_{I\in \dist(\Gamma)}\underline{\hom}_{\Gamma^*}(I,M)$. 
\end{definition-proposition}
\begin{proof}
The claim that $D$ is left exact is proven by the same argument as used to prove left exactness of $H^0_{\dist}$ in Proposition \ref{filteredness prop}.
\end{proof}
The term ``ideal transform,'' and the notation $D$, appears in \cite{MR3014449}, in the context of classical local cohomology (as opposed to the local cohomology associated to a general torsion theory, of which $H^*_{\dist}$ is a special case). This is not the same thing as the localization functor associated to a torsion theory, denoted $Q$ in \cite{MR880019}: in our setting, the localization functor $Q$ would instead be $\colim_{I\in \dist(\Gamma)}\underline{\hom}_{\Gamma^*}\left(I,M/H^0_{\dist}(M)\right)$. Golan shows in \cite{MR880019} that $Q$ and $D$ coincide for a stable torsion theory, but in our setting we cannot rely on the torsion theory whose torsion class is the distinguished-torsion modules being a stable torsion theory, as shown in Proposition \ref{instability}. 

\begin{prop}
Suppose that $\Gamma$ is projective as an $A$-module and has property $\Cyc$. Then $D$ is left exact, and 
for any graded $\Gamma^*$-module $M$ we have an exact sequence 
\begin{equation}\label{exact seq 16} 0 \rightarrow H^0_{\dist}(M) \rightarrow M \rightarrow D(M) \rightarrow H^1_{\dist}(M) \rightarrow 0,\end{equation}
and an isomorphism 
\begin{equation}\label{iso 17} H^n_{\dist}(M) \cong R^{n-1}D(M)\end{equation} for all $n\geq 2$. Both \eqref{exact seq 16} and \eqref{iso 17} are natural in the variable $M$.
\end{prop}
\begin{proof}
Straightforward consequence of choosing an injective resolution $I^{\bullet}$ for $M$, applying $\underline{\hom}_{\Gamma^*}(-,I^{\bullet})$ to the short exact sequence $0 \rightarrow I \rightarrow \Gamma^*\rightarrow \Gamma^*/I\rightarrow 0$, and taking the (filtered) colimit over all $I\in \dist(\Gamma)$.
\end{proof}

\begin{corollary}
Suppose that $\Gamma$ is projective as an $A$-module and has property $\Cyc$. Let $\mathcal{T}$ be the torsion class consisting of the distinguished-torsion graded $\Gamma^*$-modules. Then the torsion theory $(\mathcal{T},\mathcal{F})$ is quasistable if and only if the ideal transform $D: \gr\Mod(\Gamma^*)\rightarrow \gr\Mod(\Gamma^*)$ is exact.
\end{corollary}

\section{Applications to the Steenrod algebra.}

Throughout this section, let $p$ be a prime number, let $A$ be the mod $p$ Steenrod algebra, and let $A^*$ be the dual mod $p$ Steenrod algebra. While we grade $A$ in the usual way (so that $\Sq^n$ is in degree $n$, $P^n$ is in degree $2n$, and $\beta$ is in degree $1$), we emphasize that, contrary to common practice in topology, {\em we grade the dual Steenrod algebra $A^*$ so that it is concentrated in nonpositive degrees,} so that $\xi_n$ is in degree $2^n-1$ for $p=2$ and $2(p^n-1)$ for $p>2$, and $\tau_n$ is in degree $2p^n-1$. This uncommon grading convention is necessary in order to consider $A^*$ as a graded $A$-module with the contragredient $A$-action.

--

\begin{lemma}
The dual Steenrod algebra $A^*$ is injective when regarded as a graded right $A$-module via the contragredient $A$-action.
\end{lemma}
\begin{proof}
This is a special case of the well-known classical (given, for example, as Lemma 3.37 in \cite{MR2455920}) that, given a ring $R$ and an $R$-module $M$, the $R$-module $\hom_{\mathbb{Z}}(R,\mathbb{Q}/\mathbb{Z})$, with the contragredient $R$-action, is injective.
\end{proof}

\begin{prop}\label{instability}
Let $\mathcal{T}$ be the distinguised-torsion graded right $A$-modules. Then 
the 
 torsion class $(\mathcal{T},\mathcal{F})$ on $\gr\Mod(A)$ is {\em not} stable.
\end{prop}
\begin{proof}
Given a left $A^*$-comodule $M$, the extended $A^*$-comodule $EM = A^*\otimes_{\mathbb{F}_p} M$ ggxx
\end{proof}

\begin{theorem}
Let $(\mathcal{T},\mathcal{F})$ be the torsion theory on graded right $A$-modules in which $\mathcal{T}$ consists of the distinguished-torsion $A$-modules. Then $(\mathcal{T},\mathcal{F})$ is quasistable.
\end{theorem}
\begin{proof}
Let $M$ be a graded right $A$-module, and let $\mathbb{\Gamma}(M)$ be its rational submodule, i.e., $\mathbb{\Gamma}(M) \hookrightarrow M$ is the counit of the adjunction given by $\Rat: \gr\Mod(\Gamma^*) \rightarrow \gr\Comod(\Gamma)$ and $\iota: \gr\Comod(\Gamma)\rightarrow \gr\Mod(\Gamma^*)$. We have the commutative diagram in $\gr\Mod(\Gamma^*)$ with exact rows and columns
\[\xymatrix{
 & 0 \ar[d] &  0 \ar[d] &  0 \ar[d] & \\
0 \ar[r] & \mathbb{\Gamma}(M) \ar[r]\ar[d] & M \ar[r]\ar[d] & M/\mathbb{\Gamma}(M) \ar[r]\ar[d] & 0 \\
0 \ar[r] & \iota\left(A_*\otimes_{\mathbb{F}_p}\Rat(M)\right) & & ggxx &
}\]
Since $A_*\otimes_{\mathbb{F}_p}\Rat(M)$ is an extended $A_*$-comodule, it is injective, and so $\iota(A_*\otimes_{\mathbb{F}_p}\Rat(M))$ is $\Gamma$-injective, i.e., $\mathcal{T}$-acyclic, by 42.20 of \cite{MR2012570}. 

IF WE CAN SHOW THAT $M/\mathbb{\Gamma}(M)$ EMBEDS INTO A PRODUCT OF COPIES OF $A$:
we use Mitchell's theorem, that $A \cong \lim_n A(n)$ as $A$-modules, to get 
$\prod_I A \cong \lim_n \prod_I A(n)$, i.e., $H^i_{\dist}(\lim_n \prod_I A(n)) \cong \colim_{I\in \dist(A_*)} \Ext^i_A(A/I,\lim_n\prod_IA(n))$. If $M$ is a quotient of $\prod_I A$, then:
$M \rightarrow \lim_n (A(n)\otimes_A M)$ is...ggxxx

IS $M/\mathbb{\Gamma}(M)$ EMBEDDABLE INTO A PRODUCT OF COPIES OF $A$?
Given a graded $A$-module $N$, we have the map $N \rightarrow \prod_{f: N \rightarrow A} A$.

%
%

ggxx
\end{proof}

ggxx

\bibliography{/home/asalch/texmf/tex/salch}{}
\bibliographystyle{plain}
\end{document}

\section{Finite cohomological dimension for $H^*_{\dist}$.}

This final section is of a speculative nature, explaining some questions (Questions \ref{question 1} and \ref{question 2}) that I do not yet know answers to, why these questions are algebraically natural, and some topological consequences for a positive answer to Question \ref{question 2}.
Classically, given a commutative Noetherian local ring $R$ of Krull dimension $d$, the local cohomology $H^n_{\mathfrak{m}}(M)$ at the maximal ideal $\mathfrak{m}$ of $R$ vanishes for all $R$-modules $M$ whenever $n>d$; while this result is classical for $R$ Cohen-Macaulay, in this greater level of generality it is the content of Corollary 3.2 of \cite{MR1172439}. It is natural to ask whether, when $\Gamma$ is a $k$-coalgebra whose dual algebra is Gorenstein-Margolis of finite dimension $d$, we might also have a vanishing theorem, establishing that the distinguished-local cohomology $H^n_{\dist}(M)$ vanishes for all $n>d$, or at least for all $n$ greater than some reasonable bound depending on $d$. I do not know whether any such vanishing theorem might hold, except in the  case $d=0$, where such a vanishing theorem holds for trivial reasons. 

In \cref{Quasistable torsion theories.} we give some discussion of existing ideas about torsion classes (like stability and the weak stability of \cite{MR3849888} and \cite{MR4026630}) which are related to such vanishing theorems. In \cref{...on products of rational...} we state some natural questions about such vanishing theorems, and topological motivations for those questions.

\subsection{Quasistable torsion theories.}
\label{Quasistable torsion theories.}
Throughout this section, let $(\mathcal{T},\mathcal{F})$ be a 
 torsion theory on an abelian category $\mathcal{C}$ with enough injectives.

\begin{definition}
An object $X$ of $\mathcal{C}$ is said to be {\em $\mathcal{T}$-acyclic} if $R^n\mathbb{\Gamma}(X)$ vanishes for all $n>0$, where $\mathbb{\Gamma} :\mathcal{C}\rightarrow\mathcal{C}$ is the composite of the inclusion $\mathcal{T}\rightarrow\mathcal{C}$ with the right adjoint to the inclusion $\mathcal{T}\hookrightarrow\mathcal{C}$.
\end{definition}

\begin{prop}\label{equiv quasistable conditions}
The following conditions are equivalent:
\begin{enumerate}
\item For each object $X$ of $\mathcal{C}$, there exists an injective object $I$ of $\mathcal{C}$ and a monomorphism $\eta: X\rightarrow I$ in $\mathcal{C}$ such that the cokernel $\coker \eta$ is $\mathcal{T}$-acyclic. 
\item For each object $X$ of $\mathcal{C}$, there exists a $\Gamma$-acyclic object $I$ of $\mathcal{C}$ and a monomorphism $\eta: X\rightarrow I$ in $\mathcal{C}$ such that the cokernel $\coker \eta$ is $\mathcal{T}$-acyclic. 
\item The functor $\mathbb{\Gamma}: \mathcal{C}\rightarrow\mathcal{C}$ has cohomological dimension $\leq 1$. That is, for every object $X$ of $\mathcal{C}$ and every integer $n>1$, $R^n\Gamma(X)$ vanishes.
\end{enumerate}
\end{prop}
\begin{proof}\leavevmode
\begin{description}
\item[1 implies 2] Immediate since injectives are $\mathcal{T}$-acyclic.
\item[2 implies 3] The long exact sequence obtained by applying $R^*\mathbb{\Gamma}$ to the long exact sequence $0 \rightarrow X \rightarrow I \rightarrow I/X\rightarrow 0$ reduces to the exact sequence
\[ 0 \rightarrow \mathbb{\Gamma} X \rightarrow \mathbb{\Gamma} I \rightarrow \mathbb{\Gamma} (I/X) \rightarrow R^1\mathbb{\Gamma}(X) \rightarrow 0 \]
and the isomorphisms $R^{n+1}\mathbb{\Gamma}(X) \cong R^{n}\mathbb{\Gamma}(I/X) \cong 0$ for all $n>0$. 
\item[3 implies 1] Choose an injective object $I$ of $\mathcal{C}$ and a monomorphism $\eta: X \rightarrow I$. Since $R^n\mathbb{\Gamma}(X)$ vanishes for all $n>1$, the long exact sequence obtained by applying $R^*\mathbb{\Gamma}$ to $0 \rightarrow X \rightarrow I \rightarrow I/X\rightarrow 0$ gives us that $R^n\mathbb{\Gamma}(I/X)$ must vanish for all $n>0$, i.e., $I/X$ is $\mathcal{T}$-acyclic, as desired.
\end{description}
\end{proof}

\begin{definition}
We will say that the torsion theory $(\mathcal{T},\mathcal{F})$ is {\em quasistable} if $(\mathcal{T},\mathcal{F})$ satisfies the equivalent conditions of Proposition \ref{equiv quasistable conditions}.
\end{definition}

\begin{prop}\label{stable quasistable and weakly stable}
Every exact stable torsion theory is quasistable. Furthermore, every quasistable torsion theory on a category of modules over a ring is weakly stable.
\end{prop}
\begin{proof}
That every exact stable torsion theory is quasistable follows from Theorem \ref{stable exact tors theories are quasistable} and Proposition \ref{equiv quasistable conditions}. That every quasistable torsion theory on a module category is also weakly stable follows from Proposition \ref{equiv quasistable conditions} together with Lemma 3.12 from \cite{MR4026630}: if a torsion theory on a category of modules over a ring has the property that $R^n\mathbb{\Gamma}$ vanishes for all $n>1$, then the torsion theory is quasistable.
\end{proof}
Proposition \ref{stable quasistable and weakly stable} probably admits some degree of generalization. It seems likely that quasistable torsion theories are stable, without any need to restrict to module categories, but we have not attempted to prove this. We have also not attempted to find out whether there are non-exact stable torsion theories which fail to be quasistable. 

\subsection{$H^*_{\dist}$ on products of rational modules.}
\label{...on products of rational...}

In gx, we showed that, for each nonnegative integer $n$, there exists a graded module $M$ over the Steenrod algebra such that $H^n_{dist}(M)$ is nontrivial. Consequently the distinguished torsion class on the category of graded $A$-modules fails to be quasistable. However, the modules $M$ in that construction are not constructed as products of rational modules, so that nonvanishing result for $H^n_{dist}(M)$ for $n>>0$ does not imply that $H^n_{dist}\left(\prod_{i\in I} M_i\right)$ can be nontrivial for $n>>0$ when all the $A$-modules $M_i$ are rational. Since (by gx) only the local cohomology groups $H^n_{dist}\left(\prod_{i\in I} M_i\right)$ for $M_i$ rational have a topological interpretation, it remains an important open question to determine whether $H^n_{dist}\left(\prod_{i\in I} M_i\right)$ can be nontrivial for large $n$ when all $M_i$ are rational. 

We now give more detail to that question. This begins with a definition:
\begin{definition}\label{def of prodrat}
Let $k$ be a field and let $\Gamma$ be a graded $k$-coalgebra such that the dual algebra $\Gamma^*$ is Gorenstein-Margolis of positive finite dimension. Let $\ProdRat(\Gamma^*)$ denote the smallest subcategory of $\gr\Mod(\Gamma^*)$ which:
\begin{enumerate}
\item contains all rational graded $\Gamma^*$-modules,
\item is closed under kernels and cokernels in $\gr\Mod(\Gamma^*)$,
\item and is closed under products and coproducts in $\gr\Mod(\Gamma^*)$.
\end{enumerate}
\end{definition} 
If $\{ \mathcal{C}_i: i\in I\}$ is a set of subcategories of $\gr\Mod(\Gamma^*)$ such that each $\mathcal{C}_i$ satisfies the three conditions in Definition \ref{def of prodrat}, then the intersection $\cap_{i\in I}\mathcal{C}_i$ also satisfies those three conditions, which is why we can speak of {\em the smallest} subcategory of $\gr\Mod(\Gamma^*)$ satisfying those conditions.

It is standard (see, for example, Theorem 3.41 of \cite{MR2050440}) that a subcategory of an abelian category $\mathcal{C}$ closed under kernels, cokernels, and biproducts is abelian, and its inclusion into $\mathcal{C}$ is exact. Consequently $\ProdRat(\Gamma^*)$ is an abelian subcategory of $\gr\Mod(\Gamma^*)$, and the inclusion $\ProdRat(\Gamma^*)\hookrightarrow \gr\Mod(\Gamma^*)$ is exact.

Recall that $\sigma[{}_{\Gamma^*}\Gamma]$ is the full subcategory of $\gr\Mod(\Gamma^*)$ generated by all submodules of quotients of graded free $\Gamma^*$-modules; see Theorem \ref{review thm} for more detail, and section 41 of \cite{MR2012570} for much more. While $\sigma[{}_{\Gamma^*}\Gamma]$ is equivalent to the category of graded $\Gamma$-comodules, it is in general not closed under products in $\gr\Mod(\Gamma^*)$, while $\ProdRat(\Gamma^*)$ is. So in the chain of inclusions
\[ \sigma[{}_{\Gamma^*}\Gamma] \subseteq \ProdRat(\Gamma^*)\subseteq \gr\Mod(\Gamma^*)\]
the inclusions are typically proper inclusions. 

\begin{question}\label{question 1}
Let $k$ be a field and let $\Gamma$ be a graded $k$-coalgebra such that the dual algebra $\Gamma^*$ is Gorenstein-Margolis of positive finite dimension.
Does there exist some integer $n$ such that $H^m_{\dist}(M)\cong 0$ for all $m>n$ and all $M\in\ob\ProdRat(\Gamma^*)$?
\end{question}
One must exercise a bit of care in making sense of Question \ref{question 1}: I know of no reason why Definition \ref{def of prodrat} ought to imply that $\ProdRat(\Gamma^*)$ has enough injectives, so it is not clear that we can define $H^m_{\dist}(M)$ to be the $n$th right derived functor of $H^0_{\dist}(M)$ in the category $\ProdRat(\Gamma^*)$. Instead, if $M$ is an object of $\ProdRat(\Gamma^*)$, then by $H^m_{\dist}(M)$ we simply mean $H^m_{\dist}$ applied to the image of $M$ under the inclusion 
\begin{equation}\label{incl functor 1} \ProdRat(\Gamma^*) \hookrightarrow\gr\Mod(\Gamma^*).\end{equation} This definition is quite reasonable, since \eqref{incl functor 1} is exact.

The most important case of Question \ref{question 1} is the case when $d=1$, since the Steenrod algebras are one-dimensional Gorenstein-Margolis algebras. Since mod $p$ homology does not always commute with infinite products of spectra, it is not possible that $H^m_{\dist}(M)$ for all $M\in\ProdRat(A)$ and all $m>0$, where $A$ is the mod $p$ Steenrod algebra. Instead, the least plausible value of $n$ such that $H^m_{\dist}(M)$ for all $M\in\ProdRat(A)$ and all $m>n$ is $n=1$. This suggests the question:
\begin{question}\label{question 2}
Let $k$ be a field and let $\Gamma$ be a graded $k$-coalgebra such that the dual algebra $\Gamma^*$ is a one-dimensional Gorenstein-Margolis ring.
Is the distinguished torsion class on $\ProdRat(\Gamma^*)$ quasistable? 

More precisely\footnote{In \cref{Quasistable torsion theories.} we have only defined and discussed quasistable torsion classes on Grothendieck categories, and as discussed above, I know of no reason to think that $\ProdRat(\Gamma^*)$ has enough injectives, hence no reason to expect $\ProdRat(\Gamma^*)$ to be Grothendieck. A suitable generalization of the ideas from \cref{Quasistable torsion theories.} yields this straightforward notion of quasistability for the distinguished torsion class on $\ProdRat(\Gamma^*)$.}, does $H^m_{\dist}(M)$ vanish for all $m>1$ and all $M\in\ob\ProdRat(\Gamma^*)$?
\end{question}

If Question \ref{question 2} has an affirmative answer---even just in the case that $\Gamma = A_*$, the dual mod $p$ Steenrod algebra---then the Hovey-Sadofsky spectral sequence 
\begin{align*}
 E^2_{s,t} &\cong R^s\prod_{i\in I}^{A_*} \left(H_*\left( X_i; \mathbb{F}_p\right)\right)_t \\
 d^r_{s,t}: E^r_{s,t} &\rightarrow E^r_{s+r,t+r-1}
\end{align*}
has a horizontal vanishing line at the $E_2$-page, and consequently converges strongly to the homology of the product, $H_{t-s}\left( \prod_{i\in I} X_i; \mathbb{F}_p\right)$, as long as each of the spectra $X_i$ is $H\mathbb{F}_p$-nilpotently complete. (gx INSERT CITATIONS) Consequently, if Question \ref{question 2} has an affirmative answer, then we get a rather Milnor-like short exact sequence
\begin{equation}\label{ses 23000} 0 \rightarrow \left( H^1_{\dist}\prod_{i\in I} H_*\left(X_i; \mathbb{F}_p\right)\right)_{n+1}
 \rightarrow
 H_n\left( \prod_{i\in I} X_i; \mathbb{F}_p\right) \rightarrow
 \prod_{i\in I} H_n\left( X_i; \mathbb{F}_p\right) \rightarrow 0 \end{equation}
whenever each spectrum $X_i$ is $H\mathbb{F}_p$-nilpotently complete. The exact sequence \eqref{ses 23000} is very appealing in its simplicity and would be a far more general and effective way to calculate homology of non-bounded-below infinite products of spectra than any other method currently known. This is the motivation for posing Question \ref{question 2}.

%
%
%
%